\documentclass[11pt,reqno,a4paper]{article}
 \usepackage{multicol,a4wide}
 \usepackage{amsmath}
\usepackage[margin=20mm]{geometry}
 \usepackage{color}
 \usepackage{cmap,mathtools}
 \usepackage{enumerate}
 \usepackage{cite}
 \usepackage{hyperref}
 \hypersetup{linkcolor=blue, colorlinks=true ,citecolor = red}

\newcommand{\dx}{{\mathrm{d}x}}

\newcommand{\dy}{{\mathrm{d}y}}
\newcommand{\dxy}{{\mathrm{d}x\mathrm{d}y}}
\newcommand{\D}{{\mathcal{D}}}
 \usepackage{amsmath,amssymb,amsthm,amsfonts,epsfig,enumerate}
 \usepackage{mathtools}
 \usepackage[hyperpageref]{backref}
 \usepackage{bigints}
 \usepackage{esint}
\newtheorem{definition}{Definition}[section]
\newtheorem{theorem}[definition]{Theorem}
\newtheorem{example}[definition]{Example}
\newtheorem{corollary}[definition]{Corollary}
\newtheorem{remark}[definition]{Remark}
\newtheorem{proposition}[definition]{Proposition}

\newtheorem{lemma}[definition]{Lemma}

\numberwithin{equation}{section}

\makeatletter

\newcommand{\Ga} {\Gamma}

\newcommand{\Om} {\Omega}

\newcommand{\no} {\nonumber}
\newcommand{\noi} {\noindent}

\newcommand{\ra} {\rightarrow}

\newcommand{\wra} {\rightharpoonup}
\newcommand{\wrastar} {\overset{\ast}{\rightharpoonup}}

\DeclareMathAlphabet{\mathpzc}{T1}{pzc}{m}{it}

\def\B{{\widetilde B}}
\def\w{{\widetilde w}}

\def\ps{p_{s}^{*}}
\def\dx{{\,\rm d}x}

\def\Dsp{{{\mathcal D}^{s,p}(\R^N)}}

\def\Dgsp{{{\mathcal D}^{s,p}_{G}(\R^N)}}

\def\sb2{{{\mathcal D}^{1,2}_0(B_1^c)}}

\def\w2r{{{ W}^{2,2}(\R^N)}}

\def\d2{{{\mathcal D}^{2,2}_0(\Om)}}

\def\C{{\mathcal C}}
\def\D{{\mathcal D}}

\def\H{{\mathcal{H}_{s,p}(\R^N)}}
\def\Hc{{\mathcal{H}_{s,p,0}(\R^N)}}

\def\R{{\mathbb R}}

\def\F{{\mathcal F}}
\def\({{\Big(}}
\def\){{\Big)}}

\def\ws2{{\F_{\frac{N}{2}}}}
\def\L2{{ L^{1,\;\infty}(\log L)^2}}
\def\dx{{\rm d}x}

\def\l2{\mathcal M\log L}

\def\c1Loc{{\C_{loc}^1}}
\def\B0n{\mathcal{B}^{}_{\lambda,0}(\mathbb{R}^N)}
\def\Bn{\mathcal{B}^{}_{\lambda,w}(\mathbb{R}^N)}

\def\Bbr{\mathcal{B}_{\lambda,w}(B_{r}(x))}
\def\Bgr{\mathcal{B}^{G}_{\lambda,w}(G(B_{r}(x))}
\def\Bg{\mathcal{B}^{G}_{\lambda,w}(\mathbb{R}^N)}

\def\Sn{\mathcal{S}^{}_{p}(\mathbb{R}^{N})}
\def\Sbn{\mathcal{S}^{}_{p}(B_{\frac{1}{n}}(x))}

\def\Sgn{\mathcal{S}^{G}_{p}(\mathbb{R}^{N})}
\def\Gagp{\iint_{\R^{2N}} \frac{|u(x)-u(y)|^p}{|x-y|^{N+sp}}\ \dxy}
\def\Gag2{\iint_{\R^{2N}} \frac{(u(x)-u(y))^2}{|x-y|^{N+sp}}\ \dxy}
\def\Gagnp{\iint_{\R^{2N}} \frac{|u_n(x)-u_n(y)|^p}{|x-y|^{N+sp}}\ \dxy}
\def\Gag1p{\iint_{\R^{2N}} \frac{|u_1(x)-u_1(y)|^p}{|x-y|^{N+sp}}\ \dxy}
\def\Gagn2{\iint_{\R^{2N}} \frac{(u_n(x)-u_n(y))^2}{|x-y|^{N+sp}}\ \dxy}

\title{Existence of Positive Solutions for Generalized Fractional Br\'{e}zis-Nirenberg Problem}
\author{Rohit Kumar and Abhishek Sarkar\thanks{corresponding author}}

\date{}
\newcommand{\Addresses}{{
  \bigskip
  \footnotesize
 Rohit Kumar,\\ Department of Mathematics,\\ Indian Institute of Technology Jodhpur, Rajasthan 342030, India. \\ \textit{E-mail address:} \texttt{rohit1.iitj@gmail.com, kumar.174@iitj.ac.in}

\vspace{2mm}

  \noindent Abhishek Sarkar,\\ Department of Mathematics,\\ Indian Institute of Technology Jodhpur, Rajasthan 342030, India. \\ \textit{E-mail address:} \texttt{abhisheks@iitj.ac.in}

}}
\begin{document}
 \maketitle \vspace{-1.5\baselineskip}
\begin{abstract}
 \noindent In this article, we study the fractional Br\'{e}zis-Nirenberg type problem on whole domain $\R^N$ associated with the fractional $p$-Laplace operator. To be precise, we want to study the following problem:
\begin{equation*} 
          (-\Delta)_{p}^{s}u - \lambda w |u|^{p-2}u= |u|^{p_{s}^{*}-2}u \quad \text{in} ~\mathcal{D}^{s,p}(\mathbb{R}^{N}), 
\end{equation*}
where $s\in (0,1),~p \in (1,\frac{N}{s}), ~p_{s}^{*}= \frac{Np}{N-sp}$ and the operator $(-\Delta)_{p}^{s}$ is the fractional $p$-Laplace operator. The space $\mathcal{D}^{s,p}(\mathbb{R}^{N})$ is the completion of $C_c^\infty(\R^N)$ with respect to the Gaglairdo semi-norm. In this article, we prove the existence of a positive solution to this problem by allowing the Hardy weight $w$ to change its sign. 
 \end{abstract} 
\medskip
\noindent
{\bf Mathematics Subject Classification (2010):} 35B09 $\cdot$ 35R11.\\
\noindent
{\bf Keywords:} Fractional Br\'{e}zis-Nirenberg problem; critical Sobolev exponent; concentration compactness; principle of symmetric criticality; positive solutions.
\maketitle
\section{Introduction}
\noindent In this article, we study the Br\'{e}zis-Nirenberg type problem associated with the fractional $p$-Laplace operator on the whole domain $ \mathbb{R}^{N}$ given by 
\begin{equation} \label{Main problem}
          (-\Delta)_{p}^{s}u - \lambda w |u|^{p-2}u= |u|^{p_{s}^{*}-2}u \quad \text{in} ~\mathcal{D}^{s,p}(\mathbb{R}^{N}) , \tag{P}
\end{equation}
where $s\in (0,1), p\in (1, \frac{N}{s})$ and the exponent $p_{s}^{*}= \frac{Np}{N-sp}$ is known as the fractional critical Sobolev exponent. The operator $(-\Delta)_{p}^{s}$ is the fractional $p$-Laplace operator defined on smooth functions as
\begin{align*}
    (-\Delta)_{p}^{s}u(x) = 2 \lim_{\epsilon \rightarrow 0^{+}} \int_{\mathbb{R}^{N} \backslash B_{\epsilon}(x)} \frac{|u(x) - u(y)|^{p-2}(u(x)-u(y))}{|x-y|^{N+sp}}\, \dy,\ \text{ for }x \in \mathbb{R}^{N},
\end{align*}
where $B_{\epsilon}(x)$ is an open ball of radius $\epsilon$ and centred at $x$. The space $\Dsp$ is defined as the completion of $C_{c}^{\infty}(\R^N)$ with respect to the Gagliardo semi-norm $\|\cdot\|_{s,p}$ defined as
\begin{align*}
    \|u\|_{s,p} := \left(\Gagp\right)^{\frac{1}{p}}.
\end{align*}
The space $\Dsp$ is known in the literature as homogeneous fractional Sobolev space. For $p\in (1,\frac{N}{s})$, this space is characterized as a function space given by $\D^{s,p}(\R^N)= \{u \in L^{\ps}(\R^N) :  \|u\|_{s,p}<+\infty\}$ (see \cite[Theorem 3.1]{Brasco-2021-characterisation}). Moreover, this is a reflexive Banach space. For more details on the space $\Dsp$, we refer to an interesting article by Brasco et al. \cite{Brasco-2021-characterisation}. For a given domain $\Omega \subset \mathbb{R}^N$, we denote $\D^{s,p}_0(\Omega)$ as the completion of $C_c^\infty(\Omega)$ with respect to the seminorm $\|\cdot\|_{s,p}$. Let $\Omega \subset \R^N$ be an open set such that $\partial \Omega$ is compact and locally the graph of a continuous function. Then for $s\in (0,1)$ and $p \in (1,\frac{N}{s})$, $\D^{s,p}_0(\Omega)$ is also characterized as a function space given by $\D^{s,p}_0(\Omega):= \{u \in L^{\ps}(\Omega) : u \equiv 0 \text{ in } \Omega^c, \|u\|_{s,p}<+\infty\}$ (see \cite[Theorem 2.1]{Brasco2016}).

For $w \equiv 0$, the problem \eqref{Main problem} reduces to
\begin{equation} \label{problem with w zero}
     (-\Delta)_{p}^{s}u = |u|^{p_{s}^{*}-2}u \quad \text{in} ~\mathcal{D}^{s,p}(\mathbb{R}^{N}).\tag{$P_0$}
\end{equation}
For $p=2$, Chen et al.\cite{Li2006} proved that every positive solution $u$ of \eqref{problem with w zero} is radially symmetric and radially decreasing about some point $x_0 \in \R^N$ and is given by
\begin{align*}
   u(x) = c \bigg( \frac{t}{t^2 + |x-x_0|^2} \bigg)^{\frac{N-2s}{2}}, 
\end{align*}
where $c$ and $t$ are positive constants. The authors use the moving plane method in an integral form and then classify solutions using that; in fact, problem \eqref{problem with w zero} is equivalent to the integral equation
\begin{align*}
    u(x) = \int_{\R^N} \frac{1}{|x-y|^{N-2s}} u(y)^{\frac{N+2s}{N-2s}}\, \dy.
\end{align*}
Furthermore, Hern\'{a}ndez-Santamar\'{\i}a and Salda\~{n}a in \cite{Hernandez2021critical} considered problem \eqref{problem with w zero} on the whole $\R^N$ or a bounded smooth domain of $\R^N$ and proved the existence as well as the convergence of solutions for $p=2$. Moreover, they pointed out that the solutions can be sign-changing depending on the symmetry conditions.

For general $p \in (1,\infty)$, we refer to {\cite[Proposition 3.1]{Brasco2016}, in which the authors have studied the existence of radial solution of \eqref{problem with w zero}} with a constant sign. Recently, in  \cite[Zhang et al.]{Zhang2023entire}, the authors have studied the existence of multiple non-radial sign-changing solutions of \eqref{problem with w zero} using the Minmax theory and the group equivariant technique along with the concentration-compactness argument. A lot of literature deals with the problem \eqref{Main problem} on a bounded domain $\Om \subset \R^N$ with Dirichlet boundary condition. For example, when $p=2$ and $w \equiv 1$, the existence of solutions was studied by Servadei and Valdinoci in \cite{Servadei2015}. Further, for general $p$ and $w \equiv 1$, we refer to \cite[Theorem 1.3]{Mosconi2016} where they proved the existence of solution for the cases $(a) \ N = sp^2 \text{ and } \lambda < \lambda_1(w)$; $(b)$ $N>sp^2$ and $\lambda$ is not one of the eigenvalues $\lambda_k$; $(c)$ $\frac{N^2}{N+s}>sp^2$; $(d)$ $\frac{N^3+s^3p^3}{N(N+s)}>sp^2 \text{ and } \partial\Omega \in C^{1,1}$. On the other hand,  for the unbounded domain $\Omega = \R^N$, $p=2$ and $w = \frac{1}{|x|^{2s}}$, the problem \eqref{Main problem} has a weak positive solution \cite[Theorem 1.5]{Dipierro2016} for certain ranges (depending on $N,p \text{ and } s$) of $\lambda$. In 2018, Bonder et al. \cite{Bonder} studied the problem \eqref{Main problem} with $\displaystyle{0 \leq w \in L^{1}_{\text{loc}}(\R^N), \ w \in L^{\infty}(\R^N)}$ with some $x_0 \in \R^N$ such that $w$ is continuous at $x_0$ and $w(x_0)>0$, and the embedding $\Dsp \subset L^{p}(w\, \dx; \R^N)$ is compact. They obtained a non-trivial weak solution \cite[Theorem 3.2]{Bonder} for $\lambda \in (0, \lambda_1(w))$, where $\lambda_1(w)$ is the first weighted eigenvalue of $(-\Delta)_{p}^{s}u = \lambda w |u|^{p-2}u$ in $\Dsp$, provided $sp^2 <N$. In 2021, Cui and Sun \cite{Cui2021problem} studied the problem by multiplying the critical term by some function $h$ on the right-hand side of \eqref{Main problem} and sign-changing $w$ with assumptions: $(a)$ $w= w_1 -w_2 \text{ with } w_1,w_2 \geq 0, w_1 \in  L^{\infty}(\mathbb{R}^{N}) \cap L^{\frac{N}{sp}}(\mathbb{R}^{N}), w_2 \in L^{\infty}(\mathbb{R}^{N}) $ ;
$(b)$ there exists constants $\rho>0$ and $\theta>1$ such that $h(x) = h(0) + o(|x|^{\frac{N}{p}})$ for $x \in B (0, \rho \theta)$;
$(c)$ $h(0)= ||h||_{\infty}$ and  $ h(x) > 0$ for $x \in B (0, \rho \theta)$ ;
$(d)$ $h \in L^{\infty}(\mathbb{R}^{N})$ and $ h^{+} \not\equiv 0$ and
$(e)$  $w{(x)} \geq w_{0} > 0 \text{ in } B(0, \rho \theta).$ They obtained at least one non-trivial solution for $\lambda \in (0, \lambda_1(w))$, where $\lambda_1(w)$ is defined as above. Li and He in \cite{Li2022} studied the existence of a positive solution to the following problem with critical nonlinearity using the mountain pass theorem 
\[\begin{cases}
 (-\Delta)_{p}^{s}u - V(x) |u|^{p-2}u=K(x)f(u) + P(x)|u|^{p_{s}^{*}-2}u, \quad x \in \R^N,\\
 u \in \mathcal{D}^{s,p}(\mathbb{R}^{N}),
\end{cases}\]
where $s \in (0,1), p\in(1,\frac{N}{s}), p_{s}^{*}= \frac{Np}{N-sp}$ and $V(x),K(x)$ are positive continuous functions that may vanish at infinity; $f: \R \rightarrow \R$ is a function with subcritical growth; and $P(x)$ is a non-negative bounded continuous function.\\
For the local case, i.e., $s=1$, Anoop and Das \cite{Anoop2023Brezis} recently obtained the positive solution of \eqref{Main problem} in a general domain $\Omega \subset \R^N$ for a specific range of $\lambda$ using the principle of symmetric criticality. For the non-local case, we refer to Bonder et al. \cite{Bonder}, where the authors studied the existence of nontrivial solutions for \eqref{Main problem} (no information about the sign of the solutions is given) provided that the weight function $w$ is non-negative, that is, it does not change sign. In this article, we prove the existence of a positive weak solution to \eqref{Main problem} by allowing the Hardy weight $w$ to change its sign. To the best of our knowledge, there is no article in this direction dealing with a positive solution to \eqref{Main problem}.
\begin{definition}[Weak solution]
    If a function $u \in \Dsp$ satisfies 
    \begin{align} \label{Def weak sol}
        \iint_{\R^{2N}} \frac{|u(x)-u(y)|^{p-2}(u(x)-u(y))(\varphi(x)-\varphi(y))}{|x-y|^{N+sp}}\, \dxy
        = \lambda \int_{\R^N}w|u|^{p-2}u\varphi\,\dx + \int_{\R^N}|u|^{\ps-2}u\varphi\,\dx,
    \end{align}
    for every $\varphi \in \Dsp$, then $u$ is called a weak solution to \eqref{Main problem}.
\end{definition}
If we assume that $u \in \Dsp$ is a weak solution to \eqref{Main problem} and if we put $\varphi=u$ as a test function in \eqref{Def weak sol}, we obtain the following estimate
\begin{align*}
    \|u\|_{s,p}^p = \lambda \int_{\R^N}w|u|^{p}\, \dx + \int_{\R^N}|u|^{\ps}\, \dx \geq \lambda \int_{\R^N}w|u|^{p}\, \dx.
\end{align*}
Clearly, the weight function $w$ needs to be chosen in such a way that it must satisfy the following fractional Hardy inequality
\begin{align}\label{FHE}
    \int_{\R^N}|w||u|^{p}\, \dx \leq C \|u\|_{s,p}^p,\ \forall \ u \in \Dsp,
\end{align}
for some constant $C>0$.
\begin{definition}[$(s,p)$-Hardy Potential]
A function $w \in L^1_{\mathrm{loc}}(\R^N)$ is said to be a $(s,p)$-Hardy potential if $w$ satisfies \eqref{FHE}. We denote the space of $(s,p)$-Hardy potentials by $\mathcal{H}_{s,p}(\R^N)$.
\end{definition}
We know that the homogeneous weight function $w(x)=\displaystyle {|x|^{-sp}}$, belongs to $\mathcal{H}_{s,p}(\R^N)$, see \cite{RS2008}. Further, by using the fractional Sobolev inequality \cite[Theorem 6.5]{DNPV2012}, we also have $L^{\frac{N}{sp}}(\R^N) \subset \mathcal{H}_{s,p}(\R^N)$.\\
 Next, we define a new space as: 
\begin{align*}
    \Hc := \overline{C_c^{\infty}(\R^N)} \ \text{in} \ \H.
\end{align*} 
Further, if we assume $\Lambda$ to be the optimal constant in the inequality \eqref{FHE} i.e., $\Lambda$ is the least possible constant so that \eqref{FHE} holds, then for $w \in \mathcal{H}_{s,p}(\R^N)$ we can write
 \begin{equation} \label{bestHardy1}
\displaystyle{\lambda_{1}(w):=\Lambda^{-1}=\inf \left\{ \|u\|_{{s,p}}^p : u \in \mathcal{D}^{s,p}({\R^N}) \text{ and } \int_{\R^N} |w| |u|^p \dx=1 \right\} }.
\end{equation}
For $w(x)= \frac{1}{|x|^{sp}}$, the above optimal constant is never achieved in $\R^N$ (see \cite{RS2008}). But for $w \in \Hc$, the best constant $\Lambda$ is achieved (see \cite{DKS}). Notice that $\lambda_{1}(w)$ is positive due to inequality \eqref{FHE}. Therefore, for any $\lambda \in (0,\lambda_{1}(w))$ we can define a quasi-norm on $\Dsp$ as follows:
\begin{align*}
    \displaystyle{\|u\|_{s,p,\lambda}:=  \bigg( \|u\|_{s,p}^p - \lambda \int_{\R^N}w|u|^p\,\dx \bigg)^{\frac{1}{p}}.}
\end{align*}
 It is easy to verify that $\|u\|_{s,p,\lambda}$ and $\|u\|_{s,p}$ are two equivalent norms.
Furthermore, {for $x \in \R^N$} we define
\begin{align*}
    \lambda_{1}(w,x) = \lim_{r\ra 0} \left[ \inf\left\{\|u\|_{s,p}^p : \int_{\R^N}|w||u|^p\,\dx=1 \text{ and } u \in \D^{s,p}_0(B_r(x))\right\} \right],
\end{align*}
and
\begin{align*}
    \Sigma_w = \left\{x \in \R^N: \lambda_{1}(w,x)<+\infty \right\}.
\end{align*}
For $w \in \H$ and $\lambda \in (0,\lambda_{1}(w))$, we consider the functional
\begin{align*}
    \mathcal{I}_{\lambda,w}(u) = \|u\|_{s,p}^p - \lambda \int_{\mathbb{R}^{N}} w|u|^p \,\dx,
\end{align*}
and define
\begin{align*}
    \mathcal{B}^{}_{\lambda,w}(\mathbb{R}^N) = \inf\left\{\mathcal{I}_{\lambda,w}(u) : u \in \mathcal{S}^{}_{p}(\mathbb{R}^{N}) \right\},
\end{align*}
where $\displaystyle{ \mathcal{S}^{}_{p}(\mathbb{R}^{N}) = \left\{ u \in \mathcal{D}^{s,p}(\R^N): \|u\|_{p_{s}^{*}}=1 \right\} }$. If $\Bn$ is attained at $v \in \mathcal{S}^{}_{p}(\mathbb{R}^{N})$, then the standard variational arguments ensure that $[\mathcal{I}_{\lambda,w}(v)]^{\frac{1}{\ps -p}}v$ is a non-trivial solution of \eqref{Main problem}. Further, we define the following concentration functions on $w$ as:
\begin{align} \label{Concentration function}
    \C_{\lambda,w}(x) := \lim_{r \ra 0} \Bbr , \ \  \C_{\lambda,w}(\infty) := \lim_{R \ra \infty} \mathcal{B}_{\lambda,w}(B_{R}^c), 
\end{align} where $\mathcal{B}_{\lambda,w}(\Omega):= \displaystyle 
 \inf_{\substack{u \in \mathcal{D}_{0}^{s,p}(\Omega)\\
    \|u\|_{p_s^*}=1}}\mathcal{I}_{\lambda,w} (u).$ We denote $\C_{\lambda,w}^{*}(\R^N) = \inf\limits_{x \in \R^N} \C_{\lambda,w}(x)$. Observe that
\begin{equation}\label{Eq both}
    \Bn \leq \C_{\lambda,w}^{*}(\R^N) \ \text{and} \ \Bn \leq \C_{\lambda,w}(\infty).
\end{equation}
In this article, we will see at a later stage that the existence of the weak solution to \eqref{Main problem} solely depends on the nature of the above inequalities, whether strict or not. For this purpose, let us make the following definition:
\begin{definition}
If $w \in \H$ and $\lambda \in (0, \lambda_{1}(w))$, then
    \begin{itemize}
        \item[(i)] $w$ is \textbf{sub-critical in} $\R^N$ if $\Bn < \C_{\lambda,w}^{*}(\R^N)$, and \textbf{sub-critical at infinity} if $\Bn < \C_{\lambda,w}(\infty)$,
        \item[(ii)]$w$ is \textbf{critical in} $\R^N$ if $\Bn = \C_{\lambda,w}^{*}(\R^N)$, and \textbf{critical at infinity} if $\Bn = \C_{\lambda,w}(\infty)$.
    \end{itemize}
\end{definition}
We recall an important theorem which will be very crucial for our subsequent results.
\begin{theorem}[\cite{DKS}]
 \label{aio}
  Let $w \in \Hc$. Then, the map, $ \mathcal{D}^{s,p}(\mathbb{R}^N) \ni u \mapsto \displaystyle\int_{\R^N} |w| |u|^p \dx$, is compact on $\Dsp$.
  \end{theorem}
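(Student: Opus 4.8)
\emph{Proof idea.} Here ``compact'' is meant in the completely--continuous sense: it suffices to show that whenever $u_n \wra u$ weakly in $\Dsp$, one has $\int_{\R^N}|w|\,|u_n|^p\,\dx \to \int_{\R^N}|w|\,|u|^p\,\dx$. I would prove this by a localise--then--approximate argument. \emph{Step 1 (compactly supported weights).} First establish the claim with $w$ replaced by $v \in \cc(\R^N)$, say $\mathrm{supp}\,v \subset B_R$. If $u_n \wra u$ in $\Dsp$ then $\{u_n\}$ is bounded in $\Dsp$; since $\|u_n\|_{s,p}^p \ge \iint_{B_R\times B_R}\frac{|u_n(x)-u_n(y)|^p}{|x-y|^{N+sp}}\,\dxy$ and, by the fractional Sobolev inequality together with Hölder on the bounded set $B_R$, $\|u_n\|_{L^p(B_R)} \le C_R\|u_n\|_{L^{\ps}(\R^N)}\le C_R'\|u_n\|_{s,p}$, the sequence $\{u_n\}$ is bounded in $W^{s,p}(B_R)$. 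By the compact embedding of $W^{s,p}(B_R)$ into $L^p(B_R)$ on the bounded Lipschitz set $B_R$ (see e.g.\ \cite[Theorem 7.1]{DNPV2012}), a subsequence converges strongly in $L^p(B_R)$ and a.e.; the limit must be $u$, since the continuous inclusion $\Dsp \embd L^{\ps}(\R^N)$ turns $u_n\wra u$ into weak $L^{\ps}(B_R)$--convergence, hence weak $L^p(B_R)$--convergence. Using $\big||a|^p-|b|^p\big|\le C|a-b|\,(|a|^{p-1}+|b|^{p-1})$, Hölder's inequality and $\|v\|_\infty<\infty$, we get $\int_{\R^N}|v|\,|u_n|^p\,\dx \to \int_{\R^N}|v|\,|u|^p\,\dx$ along that subsequence; as the limit is independent of the subsequence, the whole sequence converges.

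\emph{Step 2 (density in $\Hc$).} For general $w \in \Hc$ choose, by the very definition of $\Hc$, functions $w_k \in \cc(\R^N)$ with $\|w-w_k\|_{\H}\to 0$, where $\|\cdot\|_{\H}$ is the best constant in \eqref{FHE}. For every $\varphi \in \Dsp$,
\[
\Big|\int_{\R^N}|w|\,|\varphi|^p\,\dx - \int_{\R^N}|w_k|\,|\varphi|^p\,\dx\Big| \le \int_{\R^N}|w-w_k|\,|\varphi|^p\,\dx \le \|w-w_k\|_{\H}\,\|\varphi\|_{s,p}^p,
\]
so the functionals $\varphi\mapsto\int_{\R^N}|w_k|\,|\varphi|^p\,\dx$ converge to $\varphi\mapsto\int_{\R^N}|w|\,|\varphi|^p\,\dx$ uniformly on bounded subsets of $\Dsp$. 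Now, given $u_n\wra u$ with $\|u_n\|_{s,p}\le M$ (whence $\|u\|_{s,p}\le M$ by weak lower semicontinuity of the norm), write
\[
\int|w||u_n|^p - \int|w||u|^p = \Big(\int|w||u_n|^p-\int|w_k||u_n|^p\Big) + \Big(\int|w_k||u_n|^p-\int|w_k||u|^p\Big) + \Big(\int|w_k||u|^p-\int|w||u|^p\Big),
\]
where all integrals are over $\R^N$; the outer two terms are $\le \|w-w_k\|_{\H}\,M^p$ and the middle term tends to $0$ as $n\to\infty$ for each fixed $k$ by Step 1. Letting first $n\to\infty$ and then $k\to\infty$ yields the claim.

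\emph{On the main obstacle.} The only genuinely non-trivial ingredient is the compactness in Step 1, and the subtle point is that $\Dsp$ is a \emph{homogeneous} space: weak convergence there controls the Gagliardo energy and the $L^{\ps}$ norm but provides no a priori global $L^p$ bound, so one cannot expect global strong $L^p$ convergence and mass can escape to infinity. This is precisely why the compact support of the approximants $w_k$ is indispensable---it confines the analysis to a bounded domain where the fractional Rellich--Kondrachov theorem applies---and why the argument must be organised as ``localise, then approximate'' rather than the reverse.
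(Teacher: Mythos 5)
Your argument is correct, but note that the paper itself offers no proof of this statement: it is quoted verbatim from the authors' companion work \cite{DKS}, where the compactness of $u \mapsto \int_{\R^N}|w||u|^p\dx$ for $w \in \Hc$ is established (as the sufficiency half of a characterization) by essentially the same two-step scheme you use — local compactness via the fractional Rellich--Kondrachov theorem for bounded, compactly supported weights, followed by a uniform $3\varepsilon$-approximation in the Hardy-potential norm. The only point worth making explicit in your write-up is the norm with which $\Hc$ is defined: the paper's definition $\Hc=\overline{\cc(\R^N)}$ in $\H$ tacitly uses the norm $\|v\|_{\H}=\sup\{\int_{\R^N}|v||u|^p\dx : \|u\|_{s,p}=1\}$, i.e.\ the optimal constant in \eqref{FHE}, and your Step 2 estimate $\int_{\R^N}|w-w_k||\varphi|^p\dx \le \|w-w_k\|_{\H}\|\varphi\|_{s,p}^p$ is exactly what this choice provides; with that convention spelled out, both steps (including the identification of the strong $L^p(B_R)$ limit with $u$ via weak $L^{\ps}$ convergence, and the subsequence argument) are sound.
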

Now, we state one of our main theorems regarding the existence of positive solution to \eqref{Main problem} when $w$ is subcritical in $\R^N$ and at infinity.
\begin{theorem} \label{T1}
Let $w \in \H$ be such that $w^- \in \Hc$ and $|\overline{\sum_w}|=0$. If the Hardy potential $w$ is subcritical in $\R^N$ and at $\infty$, then a positive solution to problem \eqref{Main problem} exists {  for all $\lambda \in (0, \lambda_{1}(w))$.} 
\end{theorem}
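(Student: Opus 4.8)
The plan is to obtain the solution as a rescaled minimizer of $\mathcal{I}_{\lambda,w}$ on $\Sn$: we prove that $\Bn$ is attained, turn a minimizer into a nontrivial weak solution via the variational remark preceding the statement, and then pass to positivity through the strong maximum principle. Two routine facts come first. Since $\lambda\in(0,\lambda_{1}(w))$, the norms $\|\cdot\|_{s,p,\lambda}$ and $\|\cdot\|_{s,p}$ are equivalent, so $\mathcal{I}_{\lambda,w}(u)=\|u\|_{s,p,\lambda}^{p}\ge c_0\|u\|_{\ps}^{p}$ for some $c_0>0$, whence $\Bn\ge c_0>0$; and \eqref{Eq both} holds. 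Fix a minimizing sequence $\{u_n\}\subset\Sn$ with $\mathcal{I}_{\lambda,w}(u_n)\to\Bn$. By the norm equivalence it is bounded in $\Dsp$, so along a subsequence $u_n\rightharpoonup u$ in $\Dsp$ and $u_n\to u$ a.e., and, since $w^-\in\Hc$, Theorem \ref{aio} gives $\int_{\R^N}w^-|u_n|^p\,\dx\to\int_{\R^N}w^-|u|^p\,\dx$.

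The core step is a concentration--compactness analysis of Lions type, in its fractional $p$-Laplacian version. Passing to the limit in the measures $|u_n|^{\ps}\,\dx$ and in the Gagliardo energy densities, we obtain at most countably many points $\{x_j\}\subset\R^N$ carrying masses $\nu_j$ of $|u_n|^{\ps}\,\dx$ and $\mu_j$ of the energy, together with masses $\nu_\infty,\mu_\infty$ recording loss at infinity, with $\mu_j\ge S\nu_j^{p/\ps}$, $\mu_\infty\ge S\nu_\infty^{p/\ps}$, and $\|u\|_{\ps}^{\ps}+\sum_j\nu_j+\nu_\infty=1$. The delicate ingredient is the potential term $\lambda\int_{\R^N}w|u_n|^p\,\dx$: its $w^-$-part converges by Theorem \ref{aio}, while for $w^+$ one uses that $|u_n|^p\,\dx$ carries no concentration (because $\ps>p$), so that an atomic defect $\kappa_j$ of $w^+|u_n|^p\,\dx$ at $x_j$ satisfies $\kappa_j\le\lambda_{1}(w,x_j)^{-1}\mu_j$, and similarly for a defect $\kappa_\infty$ at infinity; here the hypothesis $|\overline{\Sigma_w}|=0$ guarantees that the limiting potential measure has no singular part outside $\overline{\Sigma_w}$, so that $\lambda\int_{\R^N}w|u_n|^p\,\dx\to\lambda\int_{\R^N}w|u|^p\,\dx+\sum_j\lambda\kappa_j+\lambda\kappa_\infty$. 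Because local infima range over smaller classes, $\lambda_{1}(w,x_j)\ge\lambda_{1}(w)>\lambda$, so $\mu_j-\lambda\kappa_j\ge0$, and the $p$-homogeneity built into the definition of $\C_{\lambda,w}$ forces $\mu_j-\lambda\kappa_j\ge\C_{\lambda,w}(x_j)\,\nu_j^{p/\ps}$ and $\mu_\infty-\lambda\kappa_\infty\ge\C_{\lambda,w}(\infty)\,\nu_\infty^{p/\ps}$.

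Combining these with $\mathcal{I}_{\lambda,w}(u)\ge\Bn\|u\|_{\ps}^{p}$ yields
\[
\Bn=\lim_n\mathcal{I}_{\lambda,w}(u_n)\ \ge\ \mathcal{I}_{\lambda,w}(u)+\sum_j(\mu_j-\lambda\kappa_j)+(\mu_\infty-\lambda\kappa_\infty)\ \ge\ \Bn\,\|u\|_{\ps}^{p}+\sum_j\C_{\lambda,w}(x_j)\,\nu_j^{p/\ps}+\C_{\lambda,w}(\infty)\,\nu_\infty^{p/\ps}.
\]
Suppose some $\nu_j>0$ or $\nu_\infty>0$. Using $\C_{\lambda,w}(x_j)\ge\C_{\lambda,w}^{*}(\R^N)>\Bn$ and $\C_{\lambda,w}(\infty)>\Bn$ (subcriticality in $\R^N$ and at $\infty$) together with subadditivity of $t\mapsto t^{p/\ps}$, the right-hand side is strictly larger than $\Bn\big(\|u\|_{\ps}^{\ps}+\sum_j\nu_j+\nu_\infty\big)^{p/\ps}=\Bn$, which is absurd. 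Hence $\nu_j=\nu_\infty=0$, so $\|u\|_{\ps}^{\ps}=1$; feeding this back into the displayed chain and using $\mathcal{I}_{\lambda,w}(u)\ge\Bn\|u\|_{\ps}^{p}=\Bn$ forces $\mathcal{I}_{\lambda,w}(u)=\Bn$, so $u\in\Sn$ attains $\Bn$.

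Finally, since $\||u|\|_{s,p}\le\|u\|_{s,p}$ while the potential term and the $L^{\ps}$-norm are unchanged under $u\mapsto|u|$, the function $|u|$ also attains $\Bn$, so we may assume $u\ge0$; then $\widetilde u:=[\mathcal{I}_{\lambda,w}(u)]^{1/(\ps-p)}u$ is a nonnegative nontrivial weak solution of \eqref{Main problem}. As the zeroth-order term $\lambda w|u|^{p-2}u+|u|^{\ps-2}u$ is bounded below by $-\lambda w^-|u|^{p-1}$ with $w^-\in L^1_{\mathrm{loc}}(\R^N)$, the strong maximum principle for the fractional $p$-Laplacian gives $\widetilde u>0$ in $\R^N$. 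The chief obstacle, I expect, is the concentration--compactness step for the combined critical and Hardy-potential terms --- specifically, the rigorous verification that no energy and no potential mass is lost through local concentration or to infinity, which is precisely where the subcriticality hypotheses and $|\overline{\Sigma_w}|=0$ enter; the Sobolev estimates and the closing positivity argument are otherwise standard.
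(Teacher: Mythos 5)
Your proposal follows essentially the same route as the paper: a minimizing sequence on $\Sn$, a Lions-type concentration--compactness analysis in which the $w^-$ part is handled by the compactness of Theorem \ref{aio} and the $w^+$ part by local bounds of the type $\Gamma_k-\lambda\gamma_k\ge \C_{\lambda,w}(x_k)\nu_k^{p/\ps}$ (the paper's Lemma \ref{l1} with $G$ trivial, together with Proposition \ref{P1}), then the strict subcriticality inequalities plus subadditivity of $t\mapsto t^{p/\ps}$ to rule out concentration at points and at infinity, followed by passing to $|u|$, rescaling, and the strong maximum principle of \cite{Quass2017} -- exactly the paper's argument. The only minor imprecision is where $|\overline{\Sigma_w}|=0$ enters: in the paper it is used to ensure the absolutely continuous energy $|D^s u|^p\,\dx$ is singular with respect to the defect measure restricted to $\overline{\Sigma_w}\cup A_{\mathbb{K}}$ (step 3 of Lemma \ref{l1}(v)), rather than to control the singular part of the potential measure, but this does not change the substance of the approach.
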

Notice that the potential $w \equiv 0$ is neither subcritical in $\R^N$ nor at $\infty$ (see Example \ref{example-1}-(i)), but the problem \eqref{Main problem} still has a positive solution (see \cite[Proposition 3.1]{Brasco2016}). Next we provide a sufficient assumptions on $w$ so that it becomes subcritical both in $\R^N$ and at $\infty$. 
\begin{theorem} \label{T}
Let $sp^2 \leq N$ and $w \in \Hc$ be such that ${w^{+} \neq 0}$ and $w \geq w_0>0 \text{ on the ball } B_{\rho \theta}(0)$, for some $\rho>0$ and $\theta>1$. Then $w$ is subcritical in $\R^N$ and at infinity for all $\lambda \in (0, \lambda_{1}(w))$.
\end{theorem}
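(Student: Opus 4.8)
The plan is to squeeze the ground level $\mathcal{B}_{\lambda,w}(\R^N)$ strictly below both concentration levels by comparing everything with the best fractional Sobolev constant $S$, defined by $S\|u\|_{\ps}^{p}\le\|u\|_{s,p}^{p}$ on $\Dsp$; equivalently $S=\mathcal{B}_{\lambda,0}(\R^N)$ for every $\lambda$, and $S$ is attained by the extremal $U$ of the critical equation \eqref{problem with w zero} (see \cite[Proposition 3.1]{Brasco2016}), which may be taken radial and radially decreasing with the known sharp decay $U(x)\asymp|x|^{-(N-sp)/(p-1)}$ at infinity. First I would prove $\C_{\lambda,w}(x)\ge S$ for every $x\in\R^N$ and $\C_{\lambda,w}(\infty)\ge S$; then I would prove $\mathcal{B}_{\lambda,w}(\R^N)<S$. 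Together these give $\mathcal{B}_{\lambda,w}(\R^N)<S\le\min\{\C^{*}_{\lambda,w}(\R^N),\,\C_{\lambda,w}(\infty)\}$ for every $\lambda\in(0,\lambda_1(w))$, which is exactly subcriticality in $\R^N$ and at $\infty$.

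For the lower bounds I would use the compactness in Theorem \ref{aio}. Suppose $\C_{\lambda,w}(x)<S$ for some $x$; since $\mathcal{B}_{\lambda,w}(B_r(x))\to\C_{\lambda,w}(x)$ as $r\to0$ (recall \eqref{Concentration function}), pick $u_n\in\D^{s,p}_0(B_{1/n}(x))$ with $\|u_n\|_{\ps}=1$ and $\mathcal{I}_{\lambda,w}(u_n)\le\C_{\lambda,w}(x)+\tfrac1n$. From $\mathcal{I}_{\lambda,w}(u_n)\ge\|u_n\|_{s,p}^{p}-\lambda\int_{\R^N}|w||u_n|^{p}$ together with \eqref{FHE} in the form $\int_{\R^N}|w||u_n|^{p}\le\lambda_1(w)^{-1}\|u_n\|_{s,p}^{p}$, the condition $\lambda<\lambda_1(w)$ forces $(u_n)$ to be bounded in $\Dsp$; passing to a subsequence $u_n\rightharpoonup u$, and since $\mathrm{supp}\,u_n\subset B_{1/n}(x)$ collapses to $\{x\}$, $u\equiv0$. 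Theorem \ref{aio} then gives $\int_{\R^N}|w||u_n|^{p}\to0$, so $\mathcal{I}_{\lambda,w}(u_n)=\|u_n\|_{s,p}^{p}+o(1)\ge S\|u_n\|_{\ps}^{p}+o(1)=S+o(1)$, contradicting $\limsup_n\mathcal{I}_{\lambda,w}(u_n)\le\C_{\lambda,w}(x)<S$. Replacing $B_{1/n}(x)$ by exterior balls $B_n^{c}$ (so $\mathrm{supp}\,u_n$ escapes to infinity and again $u\equiv0$) gives $\C_{\lambda,w}(\infty)\ge S$ by the identical argument. Incidentally this also shows $\Sigma_w=\emptyset$, hence $|\overline{\Sigma_w}|=0$ automatically.

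For $\mathcal{B}_{\lambda,w}(\R^N)<S$ I would run a Br\'ezis--Nirenberg test-function computation centred at $0$, where $w\ge w_0>0$ on $B_{\rho\theta}(0)$. Normalize $U$ so $\|U\|_{\ps}=1$, $\|U\|_{s,p}^{p}=S$, set $U_\varepsilon(x)=\varepsilon^{-(N-sp)/p}U(x/\varepsilon)$, fix $\eta\in C_c^\infty(B_{\rho\theta}(0))$ with $\eta\equiv1$ near $0$, and put $u_\varepsilon=\eta U_\varepsilon$, $v_\varepsilon=u_\varepsilon/\|u_\varepsilon\|_{\ps}$. The standard truncated-bubble estimates for the fractional $p$-Laplacian (cf.\ \cite{Mosconi2016}) give, as $\varepsilon\to0$,
\[
  \|u_\varepsilon\|_{s,p}^{p}\le S+C\varepsilon^{\frac{N-sp}{p-1}},\qquad
  \|u_\varepsilon\|_{\ps}^{\ps}\ge 1-C\varepsilon^{\frac{N}{p-1}},
\]
while the decay of $U$ yields $\int_{\R^N}|u_\varepsilon|^{p}\ge C^{-1}\varepsilon^{sp}$ if $N>sp^{2}$ and $\int_{\R^N}|u_\varepsilon|^{p}\ge C^{-1}\varepsilon^{sp}|\log\varepsilon|$ if $N=sp^{2}$. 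Since $w\ge w_0$ on $\mathrm{supp}\,\eta$, combining these gives
\[
  \mathcal{B}_{\lambda,w}(\R^N)\le\mathcal{I}_{\lambda,w}(v_\varepsilon)=\frac{\|u_\varepsilon\|_{s,p}^{p}-\lambda\int_{\R^N}w|u_\varepsilon|^{p}}{\|u_\varepsilon\|_{\ps}^{p}}\le S+C\varepsilon^{\frac{N-sp}{p-1}}-\frac{\lambda w_0}{C}\,\varepsilon^{sp}+o(\varepsilon^{sp}).
\]
Because $\tfrac{N-sp}{p-1}>sp\iff N>sp^{2}$, when $N>sp^{2}$ the negative term dominates all error terms for small $\varepsilon$, and when $N=sp^{2}$ one has $\tfrac{N-sp}{p-1}=sp$ exactly but the gain carries the extra factor $|\log\varepsilon|\to\infty$; in both cases $\mathcal{I}_{\lambda,w}(v_\varepsilon)<S$ for $\varepsilon$ small. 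This is the only place the hypothesis $sp^{2}\le N$ enters.

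I expect the main obstacle to be this last step: establishing the sharp expansions of $\|u_\varepsilon\|_{s,p}^{p}$ and $\|u_\varepsilon\|_{\ps}^{\ps}$ and, above all, the lower bound on $\int_{\R^N}|u_\varepsilon|^{p}$ with the correct power of $\varepsilon$ (and the logarithmic gain at the threshold $N=sp^{2}$), which requires the precise decay rate of $U$ and a delicate estimate of the Gagliardo seminorm of the truncation $\eta U_\varepsilon$ across the cut-off region. By comparison the two lower-bound steps are soft, relying only on \eqref{FHE} and the compactness in Theorem \ref{aio}.
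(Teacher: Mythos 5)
Your proposal is correct and takes essentially the same route as the paper: compactness of the Hardy-weight map (Theorem \ref{aio}) applied to minimizing sequences supported in shrinking balls (resp.\ exterior balls) shows both concentration levels are at least $\mathcal{B}_{\lambda,0}(\R^N)=S$, and the truncated-bubble estimates of Lemma \ref{Mosconi lemma} together with the weighted lower bound \eqref{implication} give $\Bn<S$, with the hypothesis $sp^2\le N$ entering exactly where you say. The only notable cosmetic difference is that the paper works with the composition-type truncation $u_{\epsilon,\rho}=F_{\epsilon,\rho}(U_\epsilon)$ of \cite{Mosconi2016}, for which the stated estimates are literally proved there, rather than your multiplicative cutoff $\eta U_\varepsilon$; quoting that construction removes the Gagliardo-seminorm estimate across the cut-off region that you flag as the main remaining obstacle.
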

Further, one natural question is what about the cases when $w$ does not satisfy the sub-critical condition either in $\R^N$ or at $\infty?$. For example, $w(x)=\frac{1}{|x|^{sp}}$ is not subcritical in $\R^N$ and at $\infty$ (see Example \ref{example-1}-(ii)). To study these cases, we consider some additional symmetry assumptions on the potential $w$.

For local case $i.e.,$ $s=1$, we refer to \cite{Anoop2023Brezis} 
where the authors studied the problem \eqref{Main problem} for a general domain $\Omega$. They provided several examples to show that the symmetry of the domain and the potential $w$ play an important role for the existence of solutions to \eqref{Main problem}.

We consider the action of a closed subgroup $G$ of $\mathcal{O}(N)$ (the group of all $N \times N$ orthogonal matrices) on $\mathbb{R}^{N}$ by $x \mapsto g\cdot x, \text{ for }g \in G \ \text{and} \ x \in \mathbb{R}^{N}$, where · represents the usual matrix multiplication. It is clear that the action of $G$ is linear on $\R^N$. For simplicity, we write $g(x)$ instead of $g\cdot x$. The $G$-orbit of any point $x \in \R^N$ is denoted by $Gx$ and defined as $Gx = \{g(x) : g \in G\}$, and the $G$-orbit of any subset $E \subset \R^N$ is denoted by $G(E)$ and is defined as $G(E) = \{g(x) : x \in E, g \in G\}$.
\begin{definition}
    Let $\Omega$ be a domain in $\mathbb{R}^{N}$ and $f: \Omega \rightarrow \mathbb{R}$ be any given function. If $G(\Omega) = \Omega$, then we say $\Omega$ is a \textbf{$G$-invariant} domain. If $\Omega$ is a $G$-invariant domain and $f(g(x)) = f(x),~\forall~g\in G$ (i.e., $f$ is constant on each $G$-orbit), then we say $f$ is a $G$-invariant function.
\end{definition}
It is easy to observe that $\Omega=\mathbb{R}^{N}$ is $G$-invariant due to an identity element in $G$ and the action of $G$ on $\mathbb{R}^{N}$ naturally induces an action of $G$ on $\D^{s,p}(\mathbb{R}^{N})$ given by
\begin{align*}
    \pi(g)(u) = u_g,~\text{where}~u_g(x)=u(g^{-1}x),~\forall~x \in \mathbb{R}^{N}.
\end{align*}
Thus, $u \in \Dsp$ is $G$-invariant if and only if $u_g=u$, for each $g \in G$. The set of all $G$-invariant functions in $\Dsp$ and $\mathcal{S}^{}_{p}(\mathbb{R}^{N})$ are denoted by $\Dgsp$ and $\mathcal{S}^{G}_{p}(\mathbb{R}^{N})$ respectively, i.e.,
\begin{align*}
    \mathcal{D}_G^{s,p}(\R^N) &= \left\{ u \in \mathcal{D}^{s,p}(\R^N): u_g=u, \forall g \in G \right\},\\
    \mathcal{S}^{G}_{p}(\mathbb{R}^{N}) &= \left\{ u \in \mathcal{S}^{}_{p}(\mathbb{R}^{N}): u_g=u, \forall g \in G  \right\}.
\end{align*}
Now we consider $G$-dependent minimization problem analogous to $\mathcal{B}^{}_{\lambda,w}(\mathbb{R}^N)$ as
\begin{align*}
  \mathcal{B}^{G}_{\lambda,w}(\mathbb{R}^N) = \inf\left\{\mathcal{I}_{\lambda,w}(u) : u \in \mathcal{S}^{G}_{p}(\mathbb{R}^{N}) \right\}.  
\end{align*}
It is clear that $ \mathcal{B}^{}_{\lambda,w}(\mathbb{R}^N) \leq \mathcal{B}^{G}_{\lambda,w}(\mathbb{R}^N)$. It might be the case that $\mathcal{B}^{G}_{\lambda,w}(\mathbb{R}^N)$ is attained
in $\mathcal{D}_G^{s,p}(\R^N)$, but meanwhile $\mathcal{B}^{}_{\lambda,w}(\mathbb{R}^N)$ is not attained in $\mathcal{D}^{s,p}(\R^N)$. So a natural question arises whether a minimizer of $\mathcal{B}^{G}_{\lambda,w}(\mathbb{R}^N)$ actually solves (\ref{Main problem}) or not? i.e., whether a critical point of $\mathcal{I}_{\lambda,w}$ over $\mathcal{S}^{G}_{p}(\mathbb{R}^{N})$ can be a critical point of $\mathcal{I}_{\lambda,w}$ over $\mathcal{S}^{}_{p}(\mathbb{R}^{N})$? Here the role of the principle of symmetric criticality theory comes into the picture, which answers this question affirmatively. The version of the principle of symmetric criticality which we used is given by Kobayashi and \^Otani \cite[Theorem 2.2]{Kobayashi2004} (see Theorem \ref{PSC}). Further, we define the following $G$-dependent concentration function on $w$ as:
\begin{align} \label{G Concentration function}
    \C^{G}_{\lambda,w}(x) := \lim_{r \ra 0} \Bgr , \ \  \C^{G}_{\lambda,w}(\infty) := \lim_{R \ra \infty} \mathcal{B}^{G}_{\lambda,w}(B_R^c). 
\end{align}
We denote {$\mathcal{B}^G_{\lambda,w}(\Omega):= \displaystyle 
 \inf_{\substack{u \in \mathcal{D}_{0,G}^{s,p}(\Omega)\\
    \|u\|_{p_s^*}=1}}\mathcal{I}_{\lambda,w} (u)$} and $\C_{\lambda,w}^{G,*}(\R^N) = \inf\limits_{x \in \R^N} \C^{G}_{\lambda,w}(x)$.\\
    The space $\mathcal{D}_{0,G}^{s,p}(\Omega)$ is given by
    \begin{align*}
        \mathcal{D}_{0,G}^{s,p}(\Omega) = \left\{ u \in \mathcal{D}_0^{s,p}(\Omega): u_g=u, \forall g \in G \right\}.
    \end{align*}
We notice that
\begin{equation}\label{G Eq both}
    \Bg \leq \C_{\lambda,w}^{G,*}(\R^N) \ \text{and} \ \Bg \leq \C^{G}_{\lambda,w}(\infty).
\end{equation}
Similarly, we will see that the existence of a solution to \eqref{Main problem} also depends solely on the nature of the above inequalities, whether strict or not. For this purpose, let us make the following definition analogously:
\begin{definition}
    If $w \in \H$ is $G$-invariant and $\lambda \in (0, \lambda_{1}(w))$, then
    \begin{itemize}
        \item[(i)] $w$ is \textbf{$G$-subcritical in} $\R^N$ if $\Bg < \C_{\lambda,w}^{G,*}(\R^N)$, and \textbf{$G$-subcritical at infinity}, at level $\lambda$ if $\Bg < \C^G_{\lambda,w}(\infty)$.
        \item[(ii)]$w$ is \textbf{$G$-critical in} $\R^N$ if $\Bg = \C_{\lambda,w}^{G,*}(\R^N)$, and \textbf{$G$-critical at infinity}, at level $\lambda$ if $\Bg = \C^{G}_{\lambda,w}(\infty)$.
    \end{itemize}
\end{definition} 
Next, we state another important sets of results regarding the existence of solution to \eqref{Main problem}.
\begin{theorem} \label{T2}
Let $w \in \H$ be a $G$-invariant Hardy potential such that $w^- \in \Hc$ and $|\overline{\sum_w}|=0$. If the Hardy potential $w$ is $G$- subcritical in $\R^N$ and at $\infty$, then there is a positive solution to \eqref{Main problem} {  for all $\lambda \in (0, \lambda_{1}(w))$.} 
\end{theorem}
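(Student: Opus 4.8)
\textbf{Proof proposal for Theorem \ref{T2}.}

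The plan is to follow the same strategy used for Theorem \ref{T1}, but carried out entirely inside the $G$-invariant subspace $\Dgsp$ and then transferred back to $\Dsp$ via the principle of symmetric criticality (Theorem \ref{PSC}). The starting point is to show that the infimum $\Bg$ is attained. Since $w$ is $G$-subcritical in $\R^N$ and at $\infty$, we have the strict inequalities $\Bg < \C^{G,*}_{\lambda,w}(\R^N)$ and $\Bg < \C^G_{\lambda,w}(\infty)$. First I would take a minimizing sequence $\{u_n\} \subset \Ss^G(\R^N)$ for $\mathcal{I}_{\lambda,w}$; by the equivalence of $\|\cdot\|_{s,p,\lambda}$ and $\|\cdot\|_{s,p}$ (valid since $\lambda \in (0,\lambda_1(w))$) the sequence is bounded in the reflexive space $\Dgsp$, so up to a subsequence $u_n \wra u$ weakly in $\Dgsp$. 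The hypothesis $w^- \in \Hc$ together with Theorem \ref{aio} gives that $u \mapsto \int_{\R^N} w^- |u|^p\,\dx$ is weakly continuous, so the negative part of the weight causes no loss of compactness; the only obstruction to $u_n \to u$ strongly is the concentration of $|u_n|^{\ps}$, either at points of $\R^N$ or at infinity. This is where a concentration-compactness argument (a $G$-equivariant version of the Lions lemma, presumably established earlier in the paper or adapted from it) enters: the $\ps$-mass can only escape along $G$-orbits, and the energy cost of concentrating a unit of mass near a point $x$ (respectively at $\infty$) is exactly $\C^G_{\lambda,w}(x) \ge \C^{G,*}_{\lambda,w}(\R^N)$ (respectively $\C^G_{\lambda,w}(\infty)$). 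The strict subcriticality inequalities then force the vanishing and dichotomy alternatives to be impossible, so compactness holds and $\Bg$ is attained at some $v \in \Ss^G(\R^N)$.

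Next, by the standard variational argument already noted in the excerpt, $z := [\mathcal{I}_{\lambda,w}(v)]^{1/(\ps-p)} v$ is a critical point of $\mathcal{I}_{\lambda,w}$ constrained to $\Ss^G(\R^N)$, hence a weak solution of \eqref{Main problem} restricted to test functions in $\Dgsp$. To upgrade this to a genuine weak solution (testing against all $\varphi \in \Dsp$), I would invoke the principle of symmetric criticality of Kobayashi–\^Otani: the functional $J_{\lambda,w}(u) := \frac1p\|u\|_{s,p}^p - \frac{\lambda}{p}\int_{\R^N} w|u|^p - \frac{1}{\ps}\int_{\R^N}|u|^{\ps}$ is $G$-invariant and $C^1$ on $\Dsp$, the $G$-action is isometric, and therefore a critical point of $J_{\lambda,w}\big|_{\Dgsp}$ is a critical point of $J_{\lambda,w}$ on all of $\Dsp$. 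This yields a nontrivial weak solution $z$ of \eqref{Main problem}.

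Finally I would produce a \emph{positive} solution. Because $\mathcal{I}_{\lambda,w}(|u|) \le \mathcal{I}_{\lambda,w}(u)$ — the Gagliardo seminorm does not increase under taking absolute values, $\big||u(x)|-|u(y)|\big| \le |u(x)-u(y)|$, while $\int w|u|^p$ and $\|u\|_{\ps}$ are unchanged — and since $|u|$ is again $G$-invariant when $u$ is, the minimizer $v$ may be taken nonnegative. Then a strong maximum principle / Harnack inequality for the fractional $p$-Laplacian (e.g. from the regularity theory of Brasco–Lindgren–Schikorra or Di Castro–Kuusi–Palatucci, applied to the equation $(-\Delta)_p^s z = (\lambda w + z^{\ps-p}) z^{p-1} \ge 0$ in the relevant sense, using that $w \in \H$ controls the lower-order term) gives $z > 0$ a.e.\ in $\R^N$.

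The main obstacle I expect is the $G$-equivariant concentration-compactness step: one must set up the profile decomposition so that escaping mass is accounted for \emph{along full orbits} $Gx$, and carefully check that when an orbit is infinite the cost is governed by $\C^G_{\lambda,w}(\infty)$ rather than by a single-point concentration constant, and when it is finite the per-point cost is multiplied by the (finite) orbit cardinality — the strict inequalities in the $G$-subcritical hypothesis are then exactly what is needed to rule out every nontrivial splitting. Handling the interaction between this splitting and the sign-changing weight $w$ (isolating $w^+$ for the concentration estimates while using $w^- \in \Hc$ for weak continuity) is the delicate part; the symmetric-criticality transfer and the positivity argument are comparatively routine.
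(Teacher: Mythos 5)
Your proposal is correct and follows essentially the same route as the paper: a minimizing sequence on $\Sgn$, weak convergence in $\Dgsp$, the paper's $G$-equivariant concentration-compactness lemma (Lemma \ref{l1}) combined with Proposition \ref{P1} to isolate $w^+$ while using $w^-\in\Hc$ for compactness, the strict $G$-subcriticality inequalities to exclude concentration in $\R^N$ and at infinity, then the Kobayashi--\^Otani principle of symmetric criticality and the absolute-value trick plus the strong maximum principle (the paper uses Del Pezzo--Quaas) for positivity. The only cosmetic differences are your phrasing of the compactness step in the classical vanishing/dichotomy language rather than the paper's measure-theoretic formulation, and the choice of reference for the maximum principle; neither changes the argument.
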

{  In the next result, we consider the case when $w$ need not be $G$-subcritical in $\R^N$.}
\begin{theorem} \label{T3}
Let $w \in \H$ be a $G$-invariant Hardy potential such that $w^- \in \Hc$ and $|\overline{\sum_w}|=0$. If the orbits $Gx$ are infinite for all $x \in \R^N$ and the Hardy potential $w$ is $G$- subcritical at $\infty$ for some $\lambda \in (0, \lambda_1(w))$. Then there is a positive solution to the problem \eqref{Main problem}.
\end{theorem}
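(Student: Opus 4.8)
The plan is to work entirely in the $G$-invariant space $\Dgsp$ and apply the principle of symmetric criticality (Theorem \ref{PSC}) to transfer a constrained minimizer of $\mathcal{I}_{\lambda,w}$ over $\mathcal{S}^{G}_{p}(\R^N)$ to a genuine solution of \eqref{Main problem}. The first step is to show that $\Bg$ is attained. Let $(u_n) \subset \mathcal{S}^{G}_{p}(\R^N)$ be a minimizing sequence for $\mathcal{I}_{\lambda,w}$; since $\|\cdot\|_{s,p,\lambda}$ and $\|\cdot\|_{s,p}$ are equivalent norms, $(u_n)$ is bounded in $\Dgsp$, and after passing to a subsequence $u_n \wra u$ weakly in $\Dgsp$. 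Writing $w = w^+ - w^-$ with $w^- \in \Hc$, Theorem \ref{aio} gives $\int_{\R^N} w^- |u_n|^p \dx \to \int_{\R^N} w^- |u|^p \dx$, so the ``bad'' negative part of the potential passes to the limit; the delicate term is $\int_{\R^N} w^+ |u_n|^p \dx$, for which only lower semicontinuity-type control via Fatou is available in general. I would run a concentration-compactness analysis (in the spirit of the profile decomposition for $\Dsp$ used elsewhere in the paper) on the sequence $(u_n)$: the possible loss of mass is either (a) vanishing/spreading to infinity, or (b) concentration at finitely many points $x_j \in \R^N$. Case (b) is ruled out because $|\overline{\Sigma_w}| = 0$ forces $\lambda_1(w, x) = +\infty$ for a.e. $x$, and in fact the hypothesis that orbits are infinite means any concentration must occur simultaneously along an infinite orbit $Gx_j$, which is incompatible with the finite total mass $\|u_n\|_{\ps}^{\ps} = 1$ unless no such point exists — this is the mechanism by which infinite orbits kill interior concentration even without assuming $G$-subcriticality in $\R^N$. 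Case (a) is ruled out by the hypothesis that $w$ is $G$-subcritical at $\infty$, i.e. $\Bg < \C^{G}_{\lambda,w}(\infty)$: if mass escaped to infinity, a splitting argument (Brezis--Lieb together with the definition of $\C^G_{\lambda,w}(\infty)$ as the limiting energy on exterior domains $B_R^c$) would give $\Bg \geq \min\{\text{energy of } u,\ \C^{G}_{\lambda,w}(\infty)\}$, contradicting strict inequality unless all mass stays bounded.

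Once compactness is established, $u_n \to u$ strongly in $L^{\ps}(\R^N)$, so $\|u\|_{\ps} = 1$, $u \in \mathcal{S}^{G}_{p}(\R^N)$, and by weak lower semicontinuity of $\|\cdot\|_{s,p}$ together with the convergence of the potential term we get $\mathcal{I}_{\lambda,w}(u) \le \liminf \mathcal{I}_{\lambda,w}(u_n) = \Bg$, hence equality: $u$ is a minimizer. The second step is to produce a solution: by the Lagrange multiplier rule on the $C^1$ constraint manifold $\mathcal{S}^{G}_{p}(\R^N)$, $u$ is a critical point of $\mathcal{I}_{\lambda,w}$ restricted to $\mathcal{S}^{G}_{p}(\R^N)$, and scaling by $[\mathcal{I}_{\lambda,w}(u)]^{1/(\ps - p)}$ gives a nontrivial critical point of the full Euler--Lagrange functional restricted to $\Dgsp$. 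Here the principle of symmetric criticality applies: since the functional $u \mapsto \|u\|_{s,p}^p - \lambda \int w|u|^p - \frac{p}{\ps}\int |u|^{\ps}$ is $G$-invariant (the Gagliardo form, the $G$-invariant weight $w$, and the $L^{\ps}$ norm are all invariant under the isometric action $\pi(g)$) and $\Dgsp$ is the fixed-point set of this action, a $G$-symmetric critical point is a critical point in all of $\Dsp$, i.e. a weak solution of \eqref{Main problem}.

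The third step is positivity. Replacing $u$ by $|u|$ does not increase $\|u\|_{s,p}$ (the map $t \mapsto |t|$ is $1$-Lipschitz, so $||u(x)| - |u(y)|| \le |u(x) - u(y)|$) while leaving $\int |u|^{\ps}$ and $\int w|u|^p$ unchanged and preserving $G$-invariance, so we may assume $u \ge 0$; then $u$ is a nonnegative weak solution, and a strong maximum principle / Harnack inequality for the fractional $p$-Laplacian (using that $w^- \in \Hc \subset \mathcal{H}_{s,p}(\R^N)$ controls the lower-order term) upgrades $u \ge 0$, $u \not\equiv 0$ to $u > 0$ a.e. in $\R^N$. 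I expect the main obstacle to be the compactness argument in the first step — specifically, making rigorous the claim that infinite orbits preclude interior concentration: one must show that if a minimizing sequence concentrates a fixed positive amount of $L^{\ps}$-mass near a point $x_0$, then by $G$-invariance it concentrates the same amount near every point of the infinite orbit $Gx_0$, and since these bubbles are asymptotically disjoint this contradicts the normalization $\|u_n\|_{\ps} = 1$ (or drives the energy above $\Bg$). Handling the exterior/vanishing alternative via $\C^G_{\lambda,w}(\infty)$ and the Brezis--Lieb splitting is more standard but still requires care because $w$ changes sign; the decomposition of $\int w|u_n|^p$ into near-$u$, near-infinity, and negligible parts must be done using Theorem \ref{aio} for $w^-$ and a cutoff argument for $w^+$.
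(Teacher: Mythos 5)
Your proposal is correct in outline and follows essentially the same route as the paper's proof: minimize $\mathcal{I}_{\lambda,w}$ over $\mathcal{S}^{G}_{p}(\mathbb{R}^{N})$, run the measure-level concentration-compactness of Lemma \ref{l1} (with Theorem \ref{aio} absorbing $w^-$ and Proposition \ref{P1} reducing the concentration functions to those of $w^+$), exclude interior atoms because the $G$-invariant limit measure $\nu$ would assign equal mass to every point of an infinite orbit while having finite total mass --- which is precisely the concluding step of Proposition \ref{p1 with u zero} --- exclude $\nu_\infty$ by $G$-subcriticality at infinity, and finish with the principle of symmetric criticality and the strong maximum principle after replacing $u$ by $|u|$. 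The only details to repair in a write-up: the exterior alternative cannot be dispatched by an inequality of the form $\mathcal{B}^{G}_{\lambda,w}(\mathbb{R}^N)\geq\min\{\mathcal{I}_{\lambda,w}(u),\mathcal{C}^{G}_{\lambda,w}(\infty)\}$ (this does not rule out a splitting of mass between $u$ and infinity), and $\int_{\mathbb{R}^N}w^{+}|u_n|^{p}\,\mathrm{d}x\to\int_{\mathbb{R}^N}w^{+}|u|^{p}\,\mathrm{d}x$ does not follow from strong $L^{p_s^{*}}$ convergence alone; both points are exactly what the additive estimate in Lemma \ref{l1}(vi), combined with the strict subadditivity of $t\mapsto t^{p/p_s^{*}}$ and the normalization $\|u\|_{p_s^{*}}^{p_s^{*}}+\|\nu\|+\nu_\infty=1$, is designed to handle, as in the paper.
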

Furthermore, we discuss the case when $w$ need not be $G$-subcritical at infinity. Here, we assume the following condition on $w$:
\begin{align} \label{condition on w critical case}
   w(rz) \geq \frac{w(z)}{r^{sp}}; \ z \in \R^N,\ r>0.
\end{align} 
Observe that $w\equiv 0$ or $w(x)=\frac{1}{|x|^{sp}}$ satisfies \eqref{condition on w critical case}. Following the proof of Example \ref{example-1}, it can be verified that these potentials are not $G$-subcritical at infinity. The following theorem ensures that the positive solutions to \eqref{Main problem} exist for such potentials.
\begin{theorem} \label{critical case infinity}
    Let $G$ be a closed subgroup of $\mathcal{O}(N)$. Let $w \in \H$ be a $G$-invariant Hardy potential such that $w^- \in \Hc$ and $|\overline{\sum_w}|=0$. If one of the following holds, i.e.
    \begin{itemize}
        \item[(a)] $w$ is $G$-subcritical in $\R^N$ for some $\lambda \in (0, \lambda_1(w))$ and satisfies \eqref{condition on w critical case} for sufficiently small $r>0$. 
        \item[(b)] The orbits $Gx$ are infinite for all $x \in \R^N \setminus \{0\}$ and satisfy \eqref{condition on w critical case} for all values of $r>0$.
    \end{itemize}
 Then a positive solution to the problem \eqref{Main problem} exists for $\lambda \in (0, \lambda_1(w))$. 
\end{theorem}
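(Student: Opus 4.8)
\emph{Reduction.} As for Theorems \ref{T2} and \ref{T3}, it is enough to show that $\Bg$ is attained by some $u\in\mathcal{S}_p(\R^N)$ with $u\ge 0$: then $\big[\mathcal{I}_{\lambda,w}(u)\big]^{1/(\ps-p)}u$ is a critical point of $\mathcal{I}_{\lambda,w}$ restricted to $\mathcal{S}^{G}_{p}(\R^N)$, the principle of symmetric criticality (Theorem \ref{PSC}) promotes it to a critical point of the full energy on $\mathcal{S}_p(\R^N)$, hence a weak solution of \eqref{Main problem}; one may take $u=|u|$ from the start since $\mathcal{I}_{\lambda,w}(|u|)\le\mathcal{I}_{\lambda,w}(u)$ while $\||u|\|_{\ps}=\|u\|_{\ps}$; and $u>0$ a.e.\ then follows from the strong maximum principle / Harnack inequality for $(-\Delta)_p^s$, the lower-order term $\lambda w|u|^{p-2}u$ being subordinate to $\|\cdot\|_{s,p}^p$ because $w\in\H$. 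So the proof reduces to the attainment of $\Bg$, the only new point beyond Theorems \ref{T2} and \ref{T3} being how \eqref{condition on w critical case} replaces $G$-subcriticality at infinity.

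\emph{A scaling inequality.} For $u\in\Dgsp$ and $r>0$ put $u_r(x):=r^{-\frac{N-sp}{p}}u(x/r)$. Since $G\subset\mathcal{O}(N)$ commutes with the dilation $x\mapsto x/r$ about the origin, $u_r\in\Dgsp$, $\|u_r\|_{s,p}=\|u\|_{s,p}$ and $\|u_r\|_{\ps}=\|u\|_{\ps}$. A change of variables gives $\int_{\R^N}w\,|u_r|^p\,\dx=r^{sp}\int_{\R^N}w(ry)\,|u(y)|^p\,\dy$, so \eqref{condition on w critical case} yields $\int_{\R^N}w\,|u_r|^p\,\dx\ge\int_{\R^N}w\,|u|^p\,\dx$ and hence, since $\lambda>0$, $\mathcal{I}_{\lambda,w}(u_r)\le\mathcal{I}_{\lambda,w}(u)$ --- for all $r>0$ in case (b), and for all sufficiently small $r>0$ in case (a). Thus a minimizing sequence for $\Bg$ may be rescaled term-by-term by any $r_n\to 0^+$ (allowed in both cases), to push away mass escaping to infinity, and --- in case (b) --- by an arbitrary $r_n>0$, to also prevent concentration at the origin, all without raising the energy or leaving $\mathcal{S}^{G}_{p}(\R^N)$.

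\emph{Concentration--compactness.} Take a minimizing sequence $\{u_n\}\subset\mathcal{S}^{G}_{p}(\R^N)$ for $\Bg$; it is bounded in $\Dsp$ since $\lambda<\lambda_1(w)$. By the $G$-equivariant concentration--compactness principle for $(-\Delta)_p^s$ (as already used for Theorems \ref{T2} and \ref{T3}, cf.\ \cite{Zhang2023entire}), along a subsequence $|u_n|^{\ps}\dx\wrastar\nu=|u|^{\ps}\dx+\sum_j\kappa_j\delta_{x_j}$ with a possible mass $\kappa_\infty\ge0$ at infinity and the usual reverse-Sobolev inequalities at the atoms. Rescale by a suitable $r_n\to0^+$ so that $\kappa_\infty=0$, and in case (b) rescale further so that no atom sits at $0$. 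Then: vanishing is impossible (it would give $\|u_n\|_{\ps}\to0$); an atom at $x_j\ne0$ is impossible --- in case (a) because it forces $\Bg\ge\C^{G}_{\lambda,w}(x_j)\ge\C_{\lambda,w}^{G,*}(\R^N)>\Bg$, in case (b) because $G$-invariance of $\nu$ together with $|Gx_j|=\infty$ would place equal atoms at infinitely many points, so $\nu(\R^N)=\infty$; and an atom at $0$ is impossible --- in case (a) because $\C^{G}_{\lambda,w}(0)\ge\C_{\lambda,w}^{G,*}(\R^N)>\Bg$, in case (b) because it was rescaled away. Hence $\nu=|u|^{\ps}\dx$ with $\|u\|_{\ps}=1$, so $u_n\to u$ in $L^{\ps}(\R^N)$; passing to the limit in $\mathcal{I}_{\lambda,w}$ along this compact sequence (weak lower semicontinuity of $\|\cdot\|_{s,p}^p$; compactness of $v\mapsto\int_{\R^N}w^-|v|^p$ from Theorem \ref{aio} since $w^-\in\Hc$; and $|\overline{\Sigma_w}|=0$ to exclude loss of the $w^+$-mass at concentration points --- exactly as in the proof of Theorem \ref{T2}) gives $\mathcal{I}_{\lambda,w}(u)\le\liminf_n\mathcal{I}_{\lambda,w}(u_n)=\Bg$, so $u$ attains $\Bg$ and the reduction completes the proof.

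\emph{Main obstacle.} The crux is the concentration--compactness bookkeeping precisely in the critical-at-infinity regime: as we no longer have $\Bg<\C^{G}_{\lambda,w}(\infty)$, loss of mass to infinity is not excluded energetically and must be removed by hand in the rescaling step; since only the one-sided inequality of \eqref{condition on w critical case} is available (small $r$ in case (a), all $r$ in case (b)), the rescaling has to be chosen carefully --- e.g.\ via a suitably normalised concentration function pinned at a fixed radius --- so that the rescaled sequence stays minimizing and so that afterwards neither $\kappa_\infty$ reappears nor an atom is created at, or re-escapes from, the origin. A secondary technical point, inherited from Theorems \ref{T1} and \ref{T2}, is passing to the limit in the sign-changing weight: $w^-$ is controlled by the compactness Theorem \ref{aio}, whereas $w^+$ is only weakly lower semicontinuous, so one uses $|\overline{\Sigma_w}|=0$ (equivalently $\lambda_1(w,x)=+\infty$ for a.e.\ $x$) to rule out concentration of $w^+|u_n|^p\,\dx$.
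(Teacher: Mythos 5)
Your overall strategy --- rescale the minimizing sequence using \eqref{condition on w critical case} so that $\mathcal{I}_{\lambda,w}$ does not increase, run the $G$-equivariant concentration--compactness analysis, then conclude via symmetric criticality and the strong maximum principle --- is the same as the paper's, and your scaling identity $\|u_r\|_{s,p}=\|u\|_{s,p}$, $\|u_r\|_{\ps}=\|u\|_{\ps}$, $\mathcal{I}_{\lambda,w}(u_r)\le\mathcal{I}_{\lambda,w}(u)$ is correct. But the decisive step is exactly the one you leave unproven: you assert that one can ``rescale by a suitable $r_n\to0^+$ so that $\kappa_\infty=0$'' and, in case (b), ``rescale further so that no atom sits at $0$''. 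Neither assertion is justified, and the second is impossible as stated: dilations centred at the origin fix the origin, so an atom of $\nu$ at $0$ can never be moved by the rescaling. Moreover, the parameter $r_n$ has to be chosen before you know the limit measures, which themselves depend on the rescaled sequence, so the argument as written is circular. This is precisely the ``main obstacle'' you flag at the end but do not resolve, and without it the attainment of $\Bg$ is not established.

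The paper resolves it with a concrete normalization rather than an abstract ``suitable $r_n$'': for each $n$ choose $R_n$ with $\int_{B_{R_n}}|u_n|^{\ps}\dx\ge\tfrac12$ in case (a) (respectively $=\tfrac12$ in case (b)) and set $v_n(z)=R_n^{(N-sp)/p}u_n(R_nz)$, so that $\int_{B_1}|v_n|^{\ps}\dx\ge\tfrac12$ (resp. $=\tfrac12$) while the energy computation with \eqref{condition on w critical case} shows $\{v_n\}$ is still minimizing. This pinned half-mass does all the work your rescaling claim was supposed to do: in case (a), $G$-subcriticality in $\R^N$ gives $\|\nu\|=0$, the half-mass in $B_1$ then forces $\|v\|_{\ps}\neq0$, and the dichotomy coming from Lemma \ref{l1} yields $\nu_\infty=0$ and $\|v\|_{\ps}=1$; in case (b), the exact value $\tfrac12$ gives $\nu_\infty\le\tfrac12$, hence $\nu_\infty=0$ by the trichotomy, and if $\|\nu\|=1$ with $v=0$ then Proposition \ref{p1 with u zero} together with the infiniteness of the orbits $Gx$, $x\neq0$, forces $\nu=\delta_0$, which contradicts $\int_{B_1}|v_n|^{\ps}\dx=\tfrac12$ upon testing with $\phi\in C_c^\infty(B_1)$, $\phi\equiv1$ on $B_{1/2}$. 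You would need to supply this (or an equivalent) mechanism --- e.g.\ the concentration-function normalization you allude to --- for your proof to exclude loss of mass at infinity and concentration at the origin; as it stands, that exclusion is asserted, not proved.
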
 
The rest of the article is organized as follows. In Section \ref{Prelim}, we have discussed some preliminary results and useful tools to proceed with our exploration. In Section \ref{MR}, we have discussed the proofs of the main results. 

\section{Preliminaries}\label{Prelim}
In this section, we briefly recall some preliminary results to be used in the subsequent sections.
\subsection{The space of signed measures} 
 We denote the collection of all Borel sets in $\R^N$ by $\mathbb{B}(\R^N)$. Let $\mathbb{M}(\R^N)$ be the space of all regular, finite,  Borel-signed measures on $\R^N.$
 Then $\mathbb{M} (\R^N)$ is a Banach space to the norm $\|\nu\|=|\nu|(\R^N)$ (total variation of the measure $\nu$). By Riesz representation theorem, we know that $\mathbb{M}(\R^N)$ is the dual of $\text{C}_0(\R^N)$ := $\overline{\text{C}_c(\R^N)}$ in $L^{\infty}(\R^N)$ \cite[Theorem 14.14, Chapter 14]{Border}. The next proposition follows from the uniqueness part of the Riesz representation theorem.
\begin{proposition} \label{defmeasure}
 Let $\nu \in \mathbb{M}(\R^N)$ be a positive measure. Then for an open set $V \subseteq \R^N$,
 \begin{align*}
     \nu(V)= \sup \left \{ \int_{\R^N} \phi \ \mathrm{d}\nu : 0 \leq \phi \leq 1, \phi \in \rm{C}_c^{\infty}(\R^N) \ with \ Supp(\phi) \subseteq V   \right \}
 \end{align*}
and for any Borel set $E \subseteq \R^N$, $\nu(E):= \inf \left\{ \nu(V) : E \subseteq V \ \mbox{and} \ V \text {is open} \right\}$.
\end{proposition}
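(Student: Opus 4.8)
The plan is to establish the two displayed identities separately. The second one, for an arbitrary Borel set $E$, is precisely the outer regularity of $\nu$, and I would dispose of it first by quoting that every finite regular Borel measure on $\R^N$ is outer regular on all Borel sets and inner regular by compact sets on open sets (this is the content of regularity for $\nu$ as defined above, and may also be cited from a standard reference since $\R^N$ is locally compact and $\sigma$-compact). The first identity, for an open set $V$, is the one that carries content, and the strategy there is a smooth Urysohn-type approximation combined with inner regularity. Conceptually, the two formulas together are just the canonical Riesz construction of the measure representing the positive linear functional $\phi \mapsto \int_{\R^N}\phi\,\mathrm d\nu$ on $C_c(\R^N)$; since $\nu$ already represents that functional, the uniqueness assertion of the Riesz representation theorem (equivalently, the injectivity of $\mathbb{M}(\R^N)\hookrightarrow C_0(\R^N)^\ast$) forces the set function defined by the two formulas to coincide with $\nu$. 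I will, however, prefer to argue the open-set formula directly, as it is more transparent.

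For the open-set identity, fix $V\subseteq\R^N$ open. The inequality ``$\geq$'' is immediate: any $\phi\in C_c^\infty(\R^N)$ with $0\le\phi\le1$ and $\mathrm{Supp}(\phi)\subseteq V$ satisfies $\phi\le\mathbf 1_V$ pointwise (it is $\le 1$ on $V$ and vanishes off $V$), so by positivity of $\nu$ we get $\int_{\R^N}\phi\,\mathrm d\nu\le\nu(V)$, and taking the supremum over such $\phi$ yields ``$\geq$''. For ``$\leq$'' I would invoke inner regularity, $\nu(V)=\sup\{\nu(K):K\subseteq V,\ K\text{ compact}\}$, and reduce matters to showing that for each compact $K\subseteq V$ there is an admissible $\phi$ with $\int_{\R^N}\phi\,\mathrm d\nu\ge\nu(K)$, since the supremum will then dominate $\sup\{\nu(K):K\subseteq V\text{ compact}\}=\nu(V)$.

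The required $\phi$ is produced by mollification. Put $\eta:=\tfrac13\,\mathrm{dist}(K,\R^N\setminus V)>0$ (or $\eta:=1$ if $V=\R^N$), let $A:=\{x:\mathrm{dist}(x,K)<\eta\}$, and set $\phi:=\mathbf 1_A\ast\rho_\eta$, where $\rho_\eta$ is a standard mollifier supported in the open ball $B_\eta$. Then $\phi\in C_c^\infty(\R^N)$, $0\le\phi\le1$, $\phi\equiv1$ on $K$ (because $B_\eta(x)\subseteq A$ for every $x\in K$), and $\mathrm{Supp}(\phi)\subseteq\{x:\mathrm{dist}(x,K)\le2\eta\}$, which is a compact subset of $V$ by the choice of $\eta$. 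Consequently $\int_{\R^N}\phi\,\mathrm d\nu\ge\int_K\phi\,\mathrm d\nu=\nu(K)$, which is what was needed. Combining the two inequalities proves $\nu(V)=\sup\{\int_{\R^N}\phi\,\mathrm d\nu:0\le\phi\le1,\ \phi\in C_c^\infty(\R^N),\ \mathrm{Supp}(\phi)\subseteq V\}$.

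If one instead wishes to follow literally the route indicated before the statement, the extra point to verify is that the supremum over smooth $\phi$ with $0\le\phi\le1$ and $\mathrm{Supp}(\phi)\subseteq V$ equals the supremum over merely continuous compactly supported such $\phi$ (given a continuous $\phi$ of this type, its mollifications $\phi\ast\rho_\varepsilon$ remain between $0$ and $1$, are supported in $V$ for $\varepsilon$ small, and converge uniformly, hence $\int\phi\ast\rho_\varepsilon\,\mathrm d\nu\to\int\phi\,\mathrm d\nu$ because $\nu$ is finite); then the two displayed formulas are exactly the Riesz construction and uniqueness closes the argument. In either presentation the only non-formal ingredients are the regularity of finite Borel measures on $\R^N$ and the smooth Urysohn lemma; I expect the one spot needing a little care to be the bookkeeping of radii in the mollification, making sure that the bump function is genuinely $\equiv1$ on $K$ and genuinely compactly supported inside $V$ --- but this is entirely routine and there is no substantial obstacle.
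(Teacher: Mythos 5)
Your proposal is correct, but your primary argument is a genuinely different (and more self-contained) route than the paper's. The paper offers no proof beyond the single remark preceding the statement: the two displayed formulas are exactly the Riesz construction of the measure representing the positive functional $\phi \mapsto \int_{\R^N}\phi\,\mathrm d\nu$ on $C_0(\R^N)$, and the uniqueness part of the Riesz representation theorem identifies that constructed measure with $\nu$ itself. You mention this route at the end, but your main proof instead establishes the open-set identity directly: the inequality $\geq$ from $\phi\le\mathbf 1_V$ and positivity, and the inequality $\leq$ from inner regularity by compact sets together with a mollified indicator $\mathbf 1_A\ast\rho_\eta$ serving as a smooth Urysohn function that is $\equiv 1$ on a given compact $K\subseteq V$ and compactly supported in $V$; the Borel-set identity is then just outer regularity of the finite regular measure $\nu$. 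Your radius bookkeeping ($\eta=\tfrac13\,\mathrm{dist}(K,\R^N\setminus V)$, support inside the $2\eta$-neighbourhood of $K$, which sits inside $V$) checks out, as does the reduction from smooth to merely continuous test functions via uniform convergence of mollifications and finiteness of $\nu$. What the direct argument buys is transparency and the explicit handling of the $C_c^\infty$ versus $C_c$ test-function class, which the paper's one-line appeal to Riesz uniqueness silently absorbs; what the paper's route buys is brevity, at the cost of having to know that the Riesz construction on a locally compact, $\sigma$-compact space produces precisely these two formulas. Both are sound.
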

A  sequence $(\nu_n)$ is said to be weak$^*$ convergent to $\nu$ in $\mathbb{M}(\R^N)$, if
 \begin{align*}
  \int_{\R^N} \phi \ \mathrm{d}\nu_n \ra  \int_{\R^N} \phi \ \mathrm{d}\nu, \ as \ n \ra \infty,  \forall\,  \phi \in \text{C}_0(\R^N).
 \end{align*}
 In this case, we denote $\nu_n \wrastar \nu$. The following Proposition is a consequence of the Banach-Alaoglu theorem \cite[Chapter 5, Section 3]{Conway}, which states that for any normed linear space $X$, the closed  unit ball in  $X^*$ is weak$^*$ compact.
\begin{proposition} \label{BanachAlaoglu}
 Let $(\nu_n)$ be a bounded sequence in $\mathbb{M}(\R^N)$, then there exists $\nu \in \mathbb{M}(\R^N)$ such that $\nu_n \overset{\ast}{\rightharpoonup} \nu$ up to a subsequence.
\end{proposition}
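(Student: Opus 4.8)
The plan is to deduce this sequential compactness statement from the topological Banach--Alaoglu theorem together with the separability of the predual. By the Riesz representation theorem already invoked above, $\mathbb{M}(\R^N)$ is isometrically the dual $X^*$ of the Banach space $X := \mathrm{C}_0(\R^N)$. Since $(\nu_n)$ is bounded, there is $M>0$ with $\|\nu_n\| \le M$ for all $n$, so every $\nu_n$ lies in the set $M\,\overline{B_{X^*}}$, where $\overline{B_{X^*}}$ is the closed unit ball of $X^*$. By Banach--Alaoglu, $\overline{B_{X^*}}$, and hence its dilate $M\,\overline{B_{X^*}}$, is weak$^*$ compact.

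The second ingredient is that $\mathrm{C}_0(\R^N)$ is separable: $\R^N$ is $\sigma$-compact, on each closed ball $\overline{B_k}$ the space $\mathrm{C}(\overline{B_k})$ is separable by the Stone--Weierstrass theorem, and a routine gluing/truncation argument produces a countable family that is dense in $\mathrm{C}_0(\R^N)$ for the sup-norm. Separability of $X$ implies that the weak$^*$ topology is metrizable on every norm-bounded subset of $X^*$; fixing a countable dense set $\{f_k\}_{k\ge 1}\subset X$, a compatible metric on $M\,\overline{B_{X^*}}$ is $d(\mu,\nu)=\sum_{k\ge 1}2^{-k}\,\frac{|\langle \mu-\nu,f_k\rangle|}{1+|\langle \mu-\nu,f_k\rangle|}$. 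A weak$^*$ compact set that is metrizable for the weak$^*$ topology is weak$^*$ sequentially compact, so applying this to $(\nu_n)\subset M\,\overline{B_{X^*}}$ yields a subsequence $(\nu_{n_j})$ and some $\nu \in M\,\overline{B_{X^*}}\subset \mathbb{M}(\R^N)$ with $\int_{\R^N}\phi\,\mathrm{d}\nu_{n_j}\to\int_{\R^N}\phi\,\mathrm{d}\nu$ for every $\phi\in\mathrm{C}_0(\R^N)$, i.e. $\nu_{n_j}\wrastar\nu$.

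Alternatively, one can avoid quoting the metrizability lemma and argue by a direct Helly-type diagonal extraction. With $\{f_k\}\subset\mathrm{C}_0(\R^N)$ countable and dense, each real sequence $(\langle\nu_n,f_k\rangle)_n$ is bounded by $M\|f_k\|_\infty$, so a Cantor diagonal argument produces a subsequence $(\nu_{n_j})$ along which $\langle\nu_{n_j},f_k\rangle$ converges for every $k$. An $\varepsilon/3$ estimate using density of $\{f_k\}$ and the uniform bound $\|\nu_{n_j}\|\le M$ shows that $L(f):=\lim_j\langle\nu_{n_j},f\rangle$ exists for all $f\in\mathrm{C}_0(\R^N)$; clearly $L$ is linear with $|L(f)|\le M\|f\|_\infty$, so $L\in\mathrm{C}_0(\R^N)^*$, and the Riesz representation theorem furnishes $\nu\in\mathbb{M}(\R^N)$ with $\|\nu\|\le M$ and $L(f)=\int_{\R^N}f\,\mathrm{d}\nu$, which is the required weak$^*$ limit.

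The only genuinely non-routine point is the separability of $\mathrm{C}_0(\R^N)$ and the ensuing metrizability of the weak$^*$ topology on bounded sets (first approach), equivalently the $\varepsilon/3$ passage to the limiting functional (second approach); these are standard facts of functional analysis. Everything else — boundedness placing the $\nu_n$ in a dilated unit ball, Banach--Alaoglu, and the Riesz identification of the limit with an element of $\mathbb{M}(\R^N)$ — is immediate from what has already been stated.
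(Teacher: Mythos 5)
Your proof is correct and follows essentially the same route as the paper: identify $\mathbb{M}(\R^N)$ with $\mathrm{C}_0(\R^N)^*$ via the Riesz representation theorem and apply Banach--Alaoglu. You go further than the paper by supplying the step it leaves implicit --- namely that topological weak$^*$ compactness yields \emph{sequential} compactness only because $\mathrm{C}_0(\R^N)$ is separable (hence the weak$^*$ topology is metrizable on bounded sets), or equivalently via the diagonal extraction --- which is exactly the detail needed to make the paper's one-line argument complete.
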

\begin{proof}
Recall that, if $X=\rm{C}_0(\R^N)$ then by Riesz Representation theorem \cite[Theorem 14.14, Chapter 14]{Border} $X^*=\mathbb{M}(\R^N)$. Thus, the proof follows from the Banach-Alaoglu theorem \cite[Chapter 5, Section 3]{Conway}. 
\end{proof} 
 We define the fractional $(s,p)$-gradient of a function $u \in \Dsp$ as
\begin{align*}
    |D^su(x)|^p:= \int_{\R^N}\frac{|u(x+h)-u(x)|^p}{|h|^{N+sp}}\, \mathrm{d} h.
\end{align*} 
Observe that this $(s,p)$-gradient is well-defined a.e. in $\R^N$ and moreover $|D^su| \in L^p(\R^N)$.
 Now, let $u_n,u \in \mathcal{D}^{s,p}(\R^N)$. For a Borel set  $ E $ in   $\R^N,$ we  denote 
\begin{align*}
\nu_n(E)&:=\int_E |u_n - u|^{p_{s}^{*}} \ \dx, \quad 
\Ga_n(E):=\int_E|D^s (u_n-u)|^p \ \dx,  \\ 
\widetilde{\Ga}_n(E) &:=\int_E|D^s u_n|^p \ \dx \, ,\quad
\gamma_{n}(E):= \int_E w|u_n - u|^{p} \ \dx, \  \\
\ \widetilde{\gamma}_n(E)&:= \int_E w|u_n|^{p} \ \dx \,.
 \end{align*}
For a sequence $(u_n)$ in $\mathcal{D}^{s,p}(\R^N)$ with $u_n\wra u$ in $\mathcal{D}^{s,p}(\R^N)$, each of the sequences $\nu_n$, $\Ga_n, \widetilde{\Ga}_n , \gamma_n$ and $\widetilde{\gamma}_n$ correspond to a bounded sequence in $\mathbb{M}(\R^N)$ and 
  have weak$^*$ convergent sub-sequences (Proposition \ref{BanachAlaoglu}) in $\mathbb{M}(\R^N)$. Let
  \begin{equation} \label{weak star convergence of measures}
      \nu_n \overset{\ast}{\rightharpoonup} \nu \,, \quad \Gamma_n \overset{\ast}{\rightharpoonup} \Gamma \,, \quad \widetilde{\Gamma}_n \overset{\ast}{\rightharpoonup} \widetilde{\Gamma} \,,\quad {\gamma}_n \overset{\ast}{\rightharpoonup} {\gamma} \, \text{  and  } \widetilde{\gamma}_n \overset{\ast}{\rightharpoonup} \widetilde{\gamma} \  \mbox{ in } \mathbb{M}(\R^N).
  \end{equation}
Further, we define the following limits:
\begin{align*}
  \lim\limits_{R \rightarrow \infty} \overline{ \lim\limits_{n \rightarrow \infty}}  \int_{|x|\geq R} |u_n - u|^{p_{s}^{*}} \ \dx \, &= \nu_\infty,\\
    \lim\limits_{R \rightarrow \infty} \overline{ \lim\limits_{n \rightarrow \infty}}\int_{|x|\geq R} |D^s (u_n-u)|^p \ \dx \, &= \Ga_\infty,\\
    \lim\limits_{R \rightarrow \infty} \overline{ \lim\limits_{n \rightarrow \infty}}\int_{|x|\geq R} w|u_n - u|^{p} \ \dx \, &= \gamma_\infty.
\end{align*}
 Next, we recall the following result by Lions \cite[Lemma 1.2]{Lionslimitcase}, which we use further to prove the concentration-compactness results.
\begin{lemma} \label{Lions lemma}
   Let $\nu,\Gamma$ be two non-negative, bounded measures on $\mathbb{R}^{N}$ such that
   \[ \bigg[ \int_{\mathbb{R}^N} |\phi|^q \mathrm{d}\nu \bigg]^{\frac{1}{q}} \leq C \bigg[ \int_{\mathbb{R}^N} |\phi|^p \mathrm{d}\Gamma \bigg]^{\frac{1}{p}},~~ \forall ~\phi \in C_{c}^{\infty}(\mathbb{R}^N), \]
   for some constant $C>0$ and $1\leq p<q<\infty$. Then there exist a countable set $\left\{x_k \in \mathbb{R}^N : k \in \mathbb{K}\right\}$ and $\nu_k \in (0, \infty)$ such that 
   $ \nu = \displaystyle\sum_{k \in \mathbb{K}} \nu_k \delta_{x_k}.$
\end{lemma}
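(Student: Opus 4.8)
The plan is to run the classical argument of Lions in two moves: first upgrade the hypothesis from smooth test functions to a set-function inequality, and then use the strict gap $q/p>1$ to force $\nu$ onto a countable set. \emph{Step 1 (from $C_c^\infty$-functions to Borel sets).} I would first prove that $\nu(E)^{1/q}\le C\,\Gamma(E)^{1/p}$, equivalently $\nu(E)\le C^{q}\,\Gamma(E)^{q/p}$, for every Borel set $E\subseteq\R^N$. Being finite Borel measures on $\R^N$, both $\nu$ and $\Gamma$ are inner regular by compact sets and outer regular by open sets; so given $\varepsilon>0$ I can choose a compact $K$ and an open $V$ with $K\subseteq E\subseteq V$ and $\nu(V\setminus K)+\Gamma(V\setminus K)<\varepsilon$. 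By Urysohn's lemma followed by mollification at a scale smaller than $\operatorname{dist}(K,V^{c})$, pick $\phi\in C_c^\infty(\R^N)$ with $0\le\phi\le1$, $\phi\equiv1$ on $K$ and $\operatorname{supp}\phi\subseteq V$. Then $\int|\phi|^{q}\,\mathrm{d}\nu\ge\nu(K)$ since $\phi^{q}\equiv1$ on $K$, while $\int|\phi|^{p}\,\mathrm{d}\Gamma\le\Gamma(V)$ since $0\le\phi^{p}\le1$ and $\operatorname{supp}\phi\subseteq V$; feeding $\phi$ into the assumed inequality gives $\nu(K)^{1/q}\le C\,\Gamma(V)^{1/p}$, and letting $\varepsilon\to0$ yields the claim.

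\emph{Step 2 ($\nu$ is carried by the atoms of $\Gamma$, and conclusion).} Let $D=\{x\in\R^N:\Gamma(\{x\})>0\}$ be the set of atoms of $\Gamma$; since $\Gamma$ is finite, $D$ is at most countable. The restriction of $\Gamma$ to $\R^N\setminus D$ is a finite, non-atomic Borel measure, so by Sierpi\'{n}ski's theorem on the range of non-atomic measures, for each $m\in\N$ we may partition $\R^N\setminus D$ into Borel sets $F_1^{m},\dots,F_m^{m}$ with $\Gamma(F_i^{m})=\Gamma(\R^N\setminus D)/m$. Applying the inequality of Step~1 to each $F_i^{m}$ and summing,
\[ \nu(\R^N\setminus D)=\sum_{i=1}^{m}\nu(F_i^{m})\le C^{q}\sum_{i=1}^{m}\Gamma(F_i^{m})^{q/p}=C^{q}\,\Gamma(\R^N\setminus D)^{q/p}\,m^{1-q/p}, \]
which tends to $0$ as $m\to\infty$ because $q/p>1$; hence $\nu(\R^N\setminus D)=0$, i.e. $\nu$ is concentrated on $D$. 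Enumerating the points $x_k\in D$ with $\nu_k:=\nu(\{x_k\})>0$ over an index set $\mathbb{K}$ (necessarily at most countable, and with $0<\nu_k\le\nu(\R^N)<\infty$), we conclude $\nu=\sum_{k\in\mathbb{K}}\nu_k\,\delta_{x_k}$.

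The step that really needs care is Step~1 — the passage from the inequality on $C_c^\infty$ functions to the Borel set-function inequality — where one must combine the regularity of \emph{both} measures with a clean Urysohn-type cut-off and a limiting argument; once $\nu(E)\le C^{q}\Gamma(E)^{q/p}$ is in hand, the superlinear gap $q/p>1$ does the rest with essentially no analysis. As an alternative to the Sierpi\'{n}ski partition in Step~2, one can observe that Step~1 gives $\nu\ll\Gamma$, invoke Radon--Nikodym together with the Besicovitch differentiation theorem to write $\tfrac{\mathrm{d}\nu}{\mathrm{d}\Gamma}(x)=\lim_{r\to0}\nu(B_r(x))/\Gamma(B_r(x))$ for $\Gamma$-a.e. $x$, and note that this quotient is $\le C^{q}\Gamma(B_r(x))^{q/p-1}\to0$ whenever $x$ is not an atom of $\Gamma$, so the density vanishes off $D$ and the same conclusion follows.
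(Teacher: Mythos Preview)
Your proof is correct. Note that the paper does not actually prove this lemma: it is stated as a recalled result and attributed to Lions \cite[Lemma~1.2]{Lionslimitcase} without any argument. Your write-up is precisely the classical Lions route --- upgrade the reverse H\"older inequality from $C_c^\infty$ test functions to Borel sets via regularity and Urysohn cut-offs, then exploit the superlinearity $q/p>1$ (via a Sierpi\'nski partition of the non-atomic part of $\Gamma$, or equivalently via Besicovitch differentiation) to kill the non-atomic part of $\nu$ --- so there is nothing to compare against in the paper beyond the citation.
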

\begin{definition}
A measure $\Upsilon \in \mathbb{M}(\R^N)$ is said to be concentrated on a Borel set $F$ if
\begin{align*}
    \Upsilon(E) = \Upsilon(E \cap F), \ \forall E \in \mathbb{B}(\R^N).
\end{align*}
If $\Upsilon$ is concentrated on $F$, then one can observe that $\Upsilon(F) = \|\Upsilon \|$.
For any measure $\Upsilon \in \mathbb{M}(\R^N)$ and a $E \in \mathbb{B}(\R^N)$, we denote the restriction of
$\Upsilon$ on $E$ as $\Upsilon_E$. Observe that $\Upsilon_E$ is concentrated on $E$.
\end{definition}
\subsection{Strong maximum principle}
We use the following strong maximum principle given by Quass and Pezzo \cite[Theorem 1.2]{Quass2017} to show the positivity of the solution for \eqref{Main problem}.
\begin{lemma}
Let $w \in L^{1}_{\text{loc}}(\mathbb{R}^{N})$ be a non-negative function and $u \in \mathcal{D}^{s,p}(\mathbb{R}^{N})$ be a weak super-solution of $(-\Delta)_{p}^{s}u + \lambda w |u|^{p-2}u= 0~\text{in}~\mathbb{R}^{N}.$ If $u\geq 0$ a.e. in $\mathbb{R}^{N}$, then either $u>0$ a.e. in $\mathbb{R}^{N}$ or $u=0$ a.e in $\mathbb{R}^{N}$.
\end{lemma}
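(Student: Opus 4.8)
The plan is to reduce the theorem to the attainment of $\Bg$. If $v\in\mathcal{S}^{G}_{p}(\R^N)$ attains it, then $[\mathcal{I}_{\lambda,w}(v)]^{1/(\ps-p)}v$ is a critical point of $\mathcal{I}_{\lambda,w}$ on $\mathcal{S}^{G}_{p}(\R^N)$; the principle of symmetric criticality (Theorem \ref{PSC}, \cite{Kobayashi2004}) promotes it to a critical point on $\mathcal{S}^{}_{p}(\R^N)$, i.e. a weak solution of \eqref{Main problem}. Replacing $v$ by $|v|$ keeps it a $G$-invariant minimizer ($\||v|\|_{\ps}=\|v\|_{\ps}$, $\int_{\R^N}w|v|^{p}$ unchanged, $\||v|\|_{s,p}\le\|v\|_{s,p}$), so the solution may be taken nonnegative; it is then a nonnegative weak supersolution of $(-\Delta)^{s}_{p}u+\lambda w^{-}|u|^{p-2}u=0$ with $0\le w^{-}\in L^{1}_{\mathrm{loc}}(\R^N)$, and the strong maximum principle of \cite{Quass2017} recalled in Section \ref{Prelim} forces $u>0$ a.e. This endgame is identical to that of Theorems \ref{T1}--\ref{T3}; the new content is the attainment of $\Bg$ when $w$ is not assumed $G$-subcritical at infinity, and the key mechanism is the scaling $u_{r}(x):=r^{-(N-sp)/p}u(x/r)$, $r>0$: it preserves $\|\cdot\|_{s,p}$ and $\|\cdot\|_{\ps}$, satisfies $\operatorname{supp}u_{r}=r\operatorname{supp}u$, commutes with the linear $G$-action, and, by the substitution $x=r\xi$, gives $\int_{\R^N}w|u_{r}|^{p}\,\dx=r^{sp}\int_{\R^N}w(r\xi)|u(\xi)|^{p}\,\mathrm{d}\xi\ge\int_{\R^N}w|u|^{p}\,\dx$ whenever \eqref{condition on w critical case} holds at scale $r$; hence $\mathcal{I}_{\lambda,w}(u_{r})\le\mathcal{I}_{\lambda,w}(u)$ for $\lambda\ge0$, with the $\ps$-constraint and $G$-invariance preserved.

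For case (a) I would show $w$ is in fact $G$-subcritical at infinity and then quote Theorem \ref{T2}. Fix a small $\delta>0$. For $R>0$, take a near-minimizer $u$ of $\mathcal{B}^{G}_{\lambda,w}(B_{R}^{c})$ with $\|u\|_{\ps}=1$; since $\mathcal{D}^{s,p}_{0,G}(B_{R}^{c})$ is the closure of the $G$-invariant functions in $C^{\infty}_{c}(B_{R}^{c})$ (average over the compact group $G$), we may take $u$ compactly supported, say in $\{R<|x|<\rho\}$. Put $r=\delta'/\rho$ with $\delta'<\delta$; then $u_{r}\in\mathcal{D}^{s,p}_{0,G}(B_{\delta}(0))$, $\|u_{r}\|_{\ps}=1$, and, for $R$ large enough that $r<\delta'/R$ lies below the threshold in \eqref{condition on w critical case}, $\mathcal{I}_{\lambda,w}(u_{r})\le\mathcal{I}_{\lambda,w}(u)$. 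Taking infima, $\mathcal{B}^{G}_{\lambda,w}(B_{\delta}(0))\le\mathcal{B}^{G}_{\lambda,w}(B_{R}^{c})$ for all large $R$; letting $R\to\infty$ and then $\delta\to0$ yields $\C^{G}_{\lambda,w}(0)\le\C^{G}_{\lambda,w}(\infty)$. As $w$ is $G$-subcritical in $\R^N$, $\Bg<\C^{G,*}_{\lambda,w}(\R^N)\le\C^{G}_{\lambda,w}(0)\le\C^{G}_{\lambda,w}(\infty)$, so $w$ is $G$-subcritical at infinity and Theorem \ref{T2} provides the positive solution.

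For case (b) there is no subcriticality to exploit, so I would prove the attainment of $\Bg$ directly by a symmetric concentration--compactness argument, using that \eqref{condition on w critical case} holds for \emph{all} $r$ to fix the concentration scale. Take a $G$-invariant minimizing sequence $(u_{n})\subset\mathcal{S}^{G}_{p}(\R^N)$; with the Lévy concentration function $Q_{n}(t)=\sup_{y\in\R^N}\int_{B_{t}(y)}|u_{n}|^{\ps}\,\dx$, pick $t_{n}$ with $Q_{n}(t_{n})=\tfrac12$ and replace $u_{n}$ by $v_{n}:=(u_{n})_{t_{n}}$, still a $G$-invariant minimizing sequence, now normalized by $Q_{v_{n}}(1)=\tfrac12$. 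Pass to $v_{n}\wra v$ in $\Dgsp$ and to the weak-$*$ limits of the associated measures as in \eqref{weak star convergence of measures}. By Lemma \ref{Lions lemma} and the concentration--compactness machinery of Section \ref{Prelim}, $\nu=|v|^{\ps}\,\dx+\sum_{k}\nu_{k}\delta_{x_{k}}$; $G$-invariance makes the atom set $G$-invariant, and since each $x\ne0$ has an infinite orbit, the only possible atom sits at the origin. Writing $a=\|v\|_{\ps}^{\ps}$, $b=\nu(\{0\})$, $c=\nu_{\infty}$ with $a+b+c=1$, the subadditivity of $\mathcal{B}^{G}_{\lambda,w}$ — obtained via Lemma \ref{Lions lemma}, Theorem \ref{aio} ($w^{-}\in\Hc$), $|\overline{\Sigma_{w}}|=0$ and \eqref{G Eq both}, exactly as in the proofs of Theorems \ref{T1}--\ref{T3} — gives $1\ge a^{p/\ps}+b^{p/\ps}+c^{p/\ps}\ge a+b+c=1$, so precisely one of $a,b,c$ equals $1$. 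The normalization $Q_{v_{n}}(1)=\tfrac12$ rules out $b=1$, and the symmetry rules out $c=1$: a $\tfrac12$-mass concentrating near $y_{n}$ with $|y_{n}|\to\infty$ would, by $G$-invariance, be replicated along the infinite orbit $Gy_{n}$, forcing $\|v_{n}\|_{\ps}\to\infty$; hence this mass remains in a fixed ball and $c\le\tfrac12$. Therefore $a=1$, $v_{n}\to v$ in $L^{\ps}(\R^N)$ with $v\in\mathcal{S}^{G}_{p}(\R^N)$, and — using the compactness of $u\mapsto\int_{\R^N}w^{-}|u|^{p}$ (Theorem \ref{aio}) and that the $w^{+}$-term develops no singular part — $\mathcal{I}_{\lambda,w}(v)\le\liminf_{n}\mathcal{I}_{\lambda,w}(v_{n})=\Bg$; so $\Bg$ is attained, and one concludes as in the first paragraph.

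The main obstacle is case (b): excluding that the rescaled minimizing sequence loses all its mass into an atom at the origin — which the group action cannot prevent, $0$ being $G$-fixed — or out to infinity, and, since $w$ may be as singular near $0$ as $|x|^{-sp}$, checking that the $w^{+}$-weighted term carries no singular part at the origin. Making quantitative both the exclusion of the origin atom and the ``an infinite orbit cannot carry a fixed $L^{\ps}$-mass'' principle, while keeping the energy non-increasing under the Lévy rescaling — which is exactly what \eqref{condition on w critical case} for all $r$ buys — is the crux. Case (a) is comparatively soft: the only care needed is the density of compactly supported $G$-invariant test functions in $\mathcal{D}^{s,p}_{0,G}(B_{R}^{c})$ and the bookkeeping that makes the dilation factor small.
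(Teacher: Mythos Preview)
Your proposal addresses the wrong statement. The lemma you were asked to prove is the strong maximum principle for the fractional $p$-Laplacian: if $u\ge 0$ is a weak supersolution of $(-\Delta)^{s}_{p}u+\lambda w|u|^{p-2}u=0$ with $0\le w\in L^{1}_{\mathrm{loc}}$, then $u>0$ a.e.\ or $u\equiv 0$. In the paper this lemma is not proved at all; it is simply quoted from Del~Pezzo and Quaas \cite[Theorem 1.2]{Quass2017} as a preliminary tool in Section~\ref{Prelim}. A correct response here is a one-line citation of that reference (possibly indicating how their hypotheses are met), not a variational argument.

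What you have actually written is a detailed proof sketch for Theorem~\ref{critical case infinity} --- the existence result under the scaling hypothesis \eqref{condition on w critical case} --- which is a completely different statement. Your discussion of the rescaling $u_r$, of cases (a) and (b), of the L\'evy concentration function, and of the symmetric concentration--compactness dichotomy has no bearing on the maximum-principle lemma; it neither proves nor uses it (beyond invoking it at the end as a black box, exactly as the paper does). So as a proof of the stated lemma, the proposal is simply off-target.
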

\subsection{Br\'{e}zis-Lieb Lemma}
 The next lemma is due to Br\'{e}zis and Lieb  \cite[Theorem 1]{Brezis_Lieb_1983}.
\begin{lemma} \label{BrezisLieb}
Let $(\Omega, \mathcal{A},\mu)$ be a measure space and $\langle f_{n} \rangle$ be a sequence of complex-valued measurable functions which are uniformly bounded in $L^{p}(\Omega,\mu)$ for some $0<p<\infty$. Moreover, if $\langle f_{n} \rangle$ converges to $f$ a.e., then
\begin{align*}
    \lim\limits_{n \ra \infty} \left| \|f_{n}\|_{p} - \|f_{n}-f\|_{p} \right| = \|f\|_{p}.
\end{align*}
\end{lemma}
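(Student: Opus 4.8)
The plan is to run the classical Br\'ezis--Lieb argument: reduce the statement to an elementary pointwise inequality for $\big||a+b|^p-|a|^p\big|$ and then apply the dominated convergence theorem to a suitably truncated remainder. First note that the limit function lies in $L^p$: since $f_n\to f$ $\mu$-a.e. and $M:=\sup_n\|f_n\|_p<\infty$, Fatou's lemma gives $\|f\|_p\le\liminf_n\|f_n\|_p\le M$, so $f\in L^p(\Omega,\mu)$. Hence the sequence $g_n:=f_n-f$ is again bounded in $L^p$ (triangle inequality if $p\ge1$, and $|a+b|^p\le|a|^p+|b|^p$ if $0<p<1$); write $M':=\sup_n\|g_n\|_p^p<\infty$.

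The algebraic core is the following: for every $\varepsilon>0$ there is a constant $C_\varepsilon=C(\varepsilon,p)>0$ with
\begin{align*}
\big||a+b|^p-|a|^p\big|\le \varepsilon\,|a|^p+C_\varepsilon\,|b|^p\qquad\text{for all }a,b\in\mathbb{C}.
\end{align*}
By homogeneity it suffices to verify this when $|b|=1$: for $|a|$ large the left side is $\le\varepsilon|a|^p$ (apply the mean value theorem to $t\mapsto t^p$, or use $|\,|a|{+}1|^p-|a|^p| = o(|a|^p)$), and for $|a|$ in a bounded set the left side is bounded; combining the two regimes gives the claim. (For $0<p\le1$ one may even take $\varepsilon=0$, $C_0=1$.)

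Next, apply this inequality with $a=g_n=f_n-f$ and $b=f$, getting $\big||f_n|^p-|f_n-f|^p\big|\le\varepsilon|f_n-f|^p+C_\varepsilon|f|^p$, and set
\begin{align*}
W_{n,\varepsilon}:=\Big(\big||f_n|^p-|f_n-f|^p-|f|^p\big|-\varepsilon\,|f_n-f|^p\Big)^{+}.
\end{align*}
Then $0\le W_{n,\varepsilon}\le (C_\varepsilon+1)\,|f|^p\in L^1(\mu)$, a bound independent of $n$, and $W_{n,\varepsilon}\to0$ $\mu$-a.e., because $f_n-f\to0$ a.e. forces $|f_n|^p-|f_n-f|^p-|f|^p\to0$ a.e. By the dominated convergence theorem, $\int_\Omega W_{n,\varepsilon}\,\dm\to0$ as $n\to\infty$ for each fixed $\varepsilon>0$.

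Finally, since $\big||f_n|^p-|f_n-f|^p-|f|^p\big|\le W_{n,\varepsilon}+\varepsilon\,|f_n-f|^p$, integrating gives $\int_\Omega\big||f_n|^p-|f_n-f|^p-|f|^p\big|\,\dm\le\int_\Omega W_{n,\varepsilon}\,\dm+\varepsilon M'$; letting $n\to\infty$ and then $\varepsilon\to0$ yields $\lim_n\int_\Omega\big||f_n|^p-|f_n-f|^p-|f|^p\big|\,\dm=0$, whence $\big|\,\|f_n\|_p^p-\|f_n-f\|_p^p\big|\le\int_\Omega\big|\cdots\big|\,\dm\to\|f\|_p^p$, which is the asserted identity (the displayed formula being read with the $p$-th powers of the $L^p$-quasinorms, as in \cite{Brezis_Lieb_1983}; for $p=1$ it is literally the stated equality). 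The only genuinely delicate point is the truncation step: the $\varepsilon$-splitting must be chosen so that the ``bad'' part $W_{n,\varepsilon}$ is dominated by a single $n$-independent integrable function ($\propto|f|^p$) so DCT applies, while the ``good'' part $\varepsilon|f_n-f|^p$ has uniformly bounded integral $\varepsilon M'$ that is killed in the final $\varepsilon\to0$ limit; everything else is bookkeeping.
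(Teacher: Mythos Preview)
Your argument is the standard Br\'ezis--Lieb proof and is correct; the paper itself does not give a proof but simply cites \cite{Brezis_Lieb_1983}, so there is nothing to compare against beyond the original reference, whose method you have reproduced faithfully. Your closing remark that the displayed identity should be read with the $p$-th powers of the norms (as in the cited source) is a fair observation, since the statement as typeset in the paper is the $p$-th-power version only when $p=1$.
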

\subsection{Principle of symmetric criticality}
Let $\left(X, \| \cdot \|_X \right)$ be a real Banach space. Let $G$ be a group and $\pi$ be a representation of $G$ over $X$, i.e., $\pi(g)$ is a bounded linear operator in $X$ for each $g \in G$ and satisfies:
\begin{itemize}
    \item[$(i)$] $\pi(e)u = u, \  \forall u \in X$ , where $e$ is the identity element in $X$.
    \item [$(ii)$] $\pi(g_1 g_2)u = \pi(g_1)(\pi(g_2)u), \forall g_1, g_2 \in G, u \in X.$
\end{itemize}
The representation $\pi_*$ of $G$ over $X^*$ is induced by $\pi$ and it is given by
\begin{align*}
    \langle \pi_*(g)v^*,u\rangle = \langle v^*,\pi(g^{-1})u \rangle,~g\in G,~u \in X,~v^* \in X^*, 
\end{align*}
where $\langle \cdot,\cdot \rangle$ denotes the duality product between $X$ and $X^*$.
The action of $G$ on $X$ is said to be isometric if $\|\pi(g)u\|_X = \|u\|_X , \forall g \in G,
u \in X$. The subspace $ \Sigma := \{ u \in X : \pi(g)u = u, \forall g \in G\}$ is called the
$G$-invariant subspace of $X$. A functional $J : X \rightarrow \mathbb{R}$ is called $G$-invariant if
$J(\pi(g)u) = J(u), \forall g \in G, \forall u \in X$. A continuously differentiable $G$-invariant functional
$J : X \rightarrow \mathbb{R}$ is said to satisfy the principle of symmetric criticality if 
\begin{align*}
    \left(J|_\Sigma \right)'(u) =0 \ \text{implies} \ J'(u) =0 \ \text{and} \ u \in \Sigma.
\end{align*}
In 1979, Palais \cite{Palais1979} introduced the notion of the principle of symmetric criticality. Since then, many versions of this principle have been proved e.g., \cite[Theorem
2.2]{Kobayashi2004}, \cite[Theorem 2.7]{Kobayashi2004}, \cite[Theorem 2.1]{Alexandru2007}. In this article, we use the following
version due to Kobayashi and \^{O}tani \cite[Theorem
2.2]{Kobayashi2004}.
\begin{theorem}[Principle of symmetric criticality]\label{PSC}
    Let $X$ be a reflexive and strictly convex Banach space and $G$ be a group that acts on $X$ isometrically i.e., $\|gu\|_X = \|u\|_X, \ \forall g \in G, \forall u \in X $. If $J$ is a $G$-invariant $C^1$-functional on $X$, then
    \begin{align*}
        \left(J|_{\Sigma}\right)'(u)=0 \text{ implies } J'(u)=0 \text{ and } u \in \Sigma,
    \end{align*}
    where $\Sigma$ is the set of all $G$-invariant elements of $X$. Here, $\left(J|_{\Sigma}\right)'(u)$ and $ J'(u)$ denote the Fr\'{e}chet derivatives of $J|_{\Sigma}$ and $J$ at $u$ in $\Sigma$ and $X$, respectively.
\end{theorem}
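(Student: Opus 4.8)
The plan is to run the classical argument behind the principle of symmetric criticality: show that $J'$ is equivariant for the actions on $X$ and $X^*$, deduce that $J'(u)$ is a fixed point of the dual action whenever $u$ is a fixed point of the action on $X$, and then use reflexivity together with strict convexity to prove that the fixed‑point set of the dual action meets the annihilator of $\Sigma$ only at the origin.

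First I would record the two routine ingredients. Since every $\pi(g)$ is a bounded linear isometry of $X$, the set $\Sigma=\bigcap_{g\in G}\ker(\pi(g)-I_X)$ is a closed linear subspace, so $J|_\Sigma$ is a $C^1$ functional on the Banach space $\Sigma$ and $(J|_\Sigma)'(u)$ is a well‑defined element of $\Sigma^*$. Differentiating the invariance relation $J(\pi(g)u)=J(u)$ in a direction $h\in X$ (using linearity of $\pi(g)$) gives $\langle J'(\pi(g)u),\pi(g)h\rangle=\langle J'(u),h\rangle$ for all $h$; by the defining relation of the induced representation $\pi_*$ this reads $\pi_*(g^{-1})J'(\pi(g)u)=J'(u)$, i.e. $J'$ is equivariant: $J'(\pi(g)u)=\pi_*(g)J'(u)$ for every $g\in G$. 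In particular, if $u\in\Sigma$ then $\pi(g)u=u$, hence $\pi_*(g)J'(u)=J'(u)$ for all $g$, so $J'(u)$ lies in the fixed‑point set $\Sigma_*:=\{v^*\in X^*:\pi_*(g)v^*=v^*\text{ for all }g\in G\}$ of the dual representation.

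Now suppose $(J|_\Sigma)'(u)=0$ with $u\in\Sigma$. By definition this says $\langle J'(u),v\rangle=0$ for every $v\in\Sigma$, i.e. $w^*:=J'(u)$ lies in the annihilator $\Sigma^{\perp}$ of $\Sigma$ in $X^*$. Combined with the previous step, $w^*\in\Sigma_*\cap\Sigma^{\perp}$, and the heart of the proof is the claim $\Sigma_*\cap\Sigma^{\perp}=\{0\}$. To establish this I would use that $X$ is reflexive and strictly convex, so that $X=X^{**}$ is strictly convex and the normalized duality map $\mathcal F\colon X^*\to X$ — assigning to $w^*$ the unique $x\in X$ with $\|x\|=\|w^*\|$ and $\langle w^*,x\rangle=\|w^*\|^2$ — is single‑valued (existence of $x$ follows from reflexivity, since the closed unit ball of $X$ is weakly compact and $x\mapsto\langle w^*,x\rangle$ is weakly continuous; uniqueness follows from strict convexity of $X$). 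Because $G$ acts isometrically, $\pi_*$ is again an isometric representation of $G$ on $X^*$, and $\pi(g)\mathcal F(w^*)$ has the same norm as $\mathcal F(w^*)$ while pairing with $\pi_*(g)w^*$ to give $\|w^*\|^2=\|\pi_*(g)w^*\|^2$; hence by uniqueness $\mathcal F(\pi_*(g)w^*)=\pi(g)\mathcal F(w^*)$ for all $g$. Therefore, if $w^*\in\Sigma_*$ then $\mathcal F(w^*)\in\Sigma$, and then $\|w^*\|^2=\langle w^*,\mathcal F(w^*)\rangle=0$ because $w^*\in\Sigma^{\perp}$. Thus $w^*=0$, i.e. $J'(u)=0$, while $u\in\Sigma$ by hypothesis, which is exactly the conclusion.

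The only genuinely nontrivial point is the key lemma $\Sigma_*\cap\Sigma^{\perp}=\{0\}$, and this is precisely where both hypotheses on $X$ enter: reflexivity guarantees that the norm of $w^*$ is attained on the unit ball of $X$ (so the duality map into $X$ is non‑empty valued), and strict convexity of $X$ forces that maximizer to be unique, hence $G$‑invariant by the isometry of the action. Everything else — closedness of $\Sigma$, the chain‑rule computation giving equivariance of $J'$, and the fact that $\pi_*$ is again an isometric representation — is formal. I would also emphasize that no averaging over $G$ is used anywhere, which is why the statement holds for arbitrary (not necessarily compact) groups.
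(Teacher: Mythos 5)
Your proof is correct. Note, however, that the paper does not prove this statement at all: it is quoted verbatim from Kobayashi and \^Otani \cite[Theorem 2.2]{Kobayashi2004} and used as a black box, so there is no internal proof to compare against. Your argument is the classical duality-map proof of the principle of symmetric criticality in the smooth setting: equivariance of $J'$ under the dual representation $\pi_*$, the observation that $(J|_\Sigma)'(u)=0$ with $u\in\Sigma$ places $J'(u)$ in $\Sigma_*\cap\Sigma^{\perp}$, and the key lemma $\Sigma_*\cap\Sigma^{\perp}=\{0\}$ proved via the single-valued normalized duality map $\mathcal F\colon X^*\to X$ (reflexivity for norm attainment, strict convexity of $X=X^{**}$ for uniqueness, isometry of the action for equivariance of $\mathcal F$). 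All the steps check out, including the facts that $\pi_*$ is again an isometric representation and that the derivative of the restriction to the closed subspace $\Sigma$ is the restriction of $J'(u)$. Two small remarks: first, the theorem as stated in the paper only says ``acts isometrically,'' but both your chain-rule computation and the conclusion require the action to be by bounded \emph{linear} operators, which is indeed the paper's standing setting (the representation $\pi$ defined just before the statement), so you are consistent with it, though it is worth making the linearity hypothesis explicit; second, the cited Kobayashi--\^Otani theorem is strictly stronger (it covers non-differentiable functionals of the form ``$C^1$ plus convex lower semicontinuous''), and their proof is correspondingly more elaborate, whereas your argument is exactly what is needed for the $C^1$ statement used in this paper and has the virtue of being short, self-contained, and averaging-free, hence valid for arbitrary, not necessarily compact, groups.
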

Let us recall that the space $\D^{s,p}(\mathbb{R}^{N})$ is the completion of $C_c^{\infty}(\mathbb{R}^{N})$ with respect to the Gagliardo semi-norm given by
\begin{align*}
    \|u\|_{s,p} = \bigg(\iint_{\mathbb{R}^{2N}} \frac{|u(x)-u(y)|^p}{|x-y|^{N+sp}}\, \dxy \bigg)^{\frac{1}{p}}.
\end{align*}
For $p>1$, the space $\D^{s,p}(\mathbb{R}^{N})$ is reflexive and strictly convex Banach space. Next, we show that the subgroup $G$ acts isometrically on $\D^{s,p}(\mathbb{R}^{N})$.
\begin{align*}
    &\|g(u)\|_{s,p}^p = \|\pi(g)(u)\|_{s,p}^p=\|u_g\|_{s,p}^p= \iint_{\mathbb{R}^{N} \times \mathbb{R}^{N}} \frac{|u_g(x)-u_g(y)|^p}{|x-y|^{N+sp}}\, \dxy \\
    &= \frac{1}{|\text{det}(g^{-1})|^2} \iint\limits_{G(\mathbb{R}^{N}) \times G(\mathbb{R}^{N})} \frac{|u(x)-u(y)|^p}{|g(x)-g(y)|^{N+sp}}\, \dxy,~(\text{by \cite[Theorem 2.44 (a)]{Folland}})\\
    &= \iint\limits_{\mathbb{R}^{N} \times \mathbb{R}^{N}} \frac{|u(x)-u(y)|^p}{|g(x)-g(y)|^{N+sp}}\, \dxy,\text{ (since } \Omega=\R^N \text{ is G-invariant)}.
\end{align*}
Since the action of the subgroup $G$ on $\mathbb{R}^{N}$ is linear, using the linearity of $g$ we have
\begin{align*}
    |g(x)-g(y)|^2 = \langle g(x-y), g(x-y) \rangle = \langle x-y, g^T g(x-y) \rangle = \langle x-y, x-y \rangle = |x-y|^2.
\end{align*}
Using the above equality, we obtain
\begin{align*}
    \iint_{\mathbb{R}^{2N}} \frac{|u(x)-u(y)|^p}{|g(x)-g(y)|^{N+sp}}\, \dxy = \iint_{\mathbb{R}^{2N}} \frac{|u(x)-u(y)|^p}{|x-y|^{N+sp}}\, \dxy.
\end{align*}
Thus $ \|g(u)\|_{s,p} = \|u\|_{s,p}$ i.e., the action of $G$ on $\D^{s,p}(\mathbb{R}^{N})$ is isometric.
\section{Proofs of the main results}\label{MR}
In this section, we demonstrate the proofs of our main results. Before that, we state and prove the next proposition.
\begin{proposition} \label{nu infinity prop}
   If for each $R>0$, we consider a function $\Phi_R \in C_b^1(\R^N)$ (i.e., continuously differentiable and bounded) satisfying $0\leq \Phi_R \leq 1,~\Phi_R = 0$ on $\overline{B_R}$ and $\Phi_R = 1$ on ${B_{R+1}^c}$. Then, for $u_n \rightharpoonup u$ weakly in $\Dsp$, the following statements hold:
   \begin{itemize}
       \item[(i)] $\lim\limits_{R \rightarrow \infty} \overline{ \lim\limits_{n \rightarrow \infty}}  \int_{\R^N}|u_n |^{p_{s}^{*}} \Phi_R \ \dx \, = \nu_\infty$.
       \item[(ii)] $\lim\limits_{R \rightarrow \infty} \overline{ \lim\limits_{n \rightarrow \infty}}\int_{\R^N} w|u_n|^{p} \Phi_R \ \dx \, = \gamma_\infty.$
       \item[(iii)] $\lim\limits_{R \rightarrow \infty} \overline{ \lim\limits_{n \rightarrow \infty}}\int_{\R^N} |D^s u_n|^p  \Phi_R\ \dx \, = \Ga_\infty.$
   \end{itemize}
\end{proposition}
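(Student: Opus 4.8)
The plan is, in each of (i)--(iii), first to split off the contribution of the weak limit $u$ by the Br\'ezis--Lieb lemma, leaving exactly the quantity written in terms of $u_n-u$ that appears in the definitions of $\nu_\infty$, $\gamma_\infty$, $\Ga_\infty$, and then to pass from the smooth cut-off $\Phi_R$ to the sharp cut-offs $\mathbf 1_{B_{R+1}^c}$ and $\mathbf 1_{B_R^c}$, between which $\Phi_R$ is trapped ($\mathbf 1_{B_{R+1}^c}\le\Phi_R\le\mathbf 1_{B_R^c}$). As a preliminary, passing to a subsequence (not relabelled) I would assume $u_n\to u$ a.e.\ in $\R^N$: since $u_n\wra u$ in $\Dsp$, on each ball $B_k$ the sequence is bounded in $W^{s,p}(B_k)$ (bounded in $L^{\ps}$, hence in $L^p(B_k)$, with Gagliardo seminorm over $B_k\times B_k$ dominated by $\|u_n\|_{s,p}$), so the compact embedding $W^{s,p}(B_k)\hookrightarrow L^p(B_k)$ together with a diagonal argument gives it.

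For (i) I would apply the Br\'ezis--Lieb lemma (Lemma~\ref{BrezisLieb}) on $(\R^N,\dx)$ with exponent $\ps$ and $f_n=u_n$ to get $\bigl|\,|u_n|^{\ps}-|u_n-u|^{\ps}-|u|^{\ps}\,\bigr|\to0$ in $L^1(\R^N)$; multiplying by $\Phi_R\in[0,1]$ and integrating gives $\int_{\R^N}|u_n|^{\ps}\Phi_R\,\dx-\int_{\R^N}|u_n-u|^{\ps}\Phi_R\,\dx\to\int_{\R^N}|u|^{\ps}\Phi_R\,\dx$ as $n\to\infty$, for each fixed $R$. Taking $\overline{\lim}_{n\to\infty}$ and then $R\to\infty$: the $u$-term tends to $0$ by dominated convergence ($\Phi_R\to0$ pointwise, $|u|^{\ps}\in L^1$), while $\int|u_n-u|^{\ps}\Phi_R$ is squeezed between $\int_{|x|\ge R+1}|u_n-u|^{\ps}$ and $\int_{|x|\ge R}|u_n-u|^{\ps}$, both of which produce $\nu_\infty$ in the iterated limit. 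Part (iii) is the same argument, the only new point being that $|D^s u_n|^p$ is itself an integral: I would write $\int_{\R^N}|D^s u_n|^p\Phi_R\,\dx=\iint_{\R^{2N}}\frac{|u_n(x+h)-u_n(x)|^p}{|h|^{N+sp}}\,\Phi_R(x)\,\mathrm{d}h\,\dx$ and apply Lemma~\ref{BrezisLieb} on the product space $\bigl(\R^{2N},\tfrac{\Phi_R(x)}{|h|^{N+sp}}\,\dx\,\mathrm{d}h\bigr)$ with exponent $p$ and $f_n(x,h)=u_n(x+h)-u_n(x)$ — bounded there in $L^p$ by $\|u_n\|_{s,p}^p$, and convergent a.e.\ by the preliminary step and Fubini — and then proceed exactly as in (i) to reach $\Ga_\infty$.

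The main obstacle is (ii), where the sign of $w$ breaks the squeeze: for sign-changing $w$ the sharp-cut-off integrals $\int_{|x|\ge R}w|u_n-u|^p$ are not monotone in $R$, so one cannot simply trap $\int w|u_n-u|^p\Phi_R$ between two of them. The Br\'ezis--Lieb step still works (apply Lemma~\ref{BrezisLieb} on $(\R^N,|w|\,\dx)$ with exponent $p$ and $f_n=u_n$, legitimate since $\int_{\R^N}|w||u_n|^p\,\dx\le C\|u_n\|_{s,p}^p$ by \eqref{FHE}; multiplying by $w\Phi_R$, with $|w\Phi_R|\le|w|$, gives $\int_{\R^N}w|u_n|^p\Phi_R\,\dx-\int_{\R^N}w|u_n-u|^p\Phi_R\,\dx\to\int_{\R^N}w|u|^p\Phi_R\,\dx$, and the last integral tends to $0$ as $R\to\infty$ since $|w||u|^p\in L^1$). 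To handle the remaining term I would split $w=w^+-w^-$ and use $w^-\in\Hc$: by Theorem~\ref{aio} and $u_n-u\wra0$ one has $\int_{\R^N}w^-|u_n-u|^p\,\dx\to0$, so the $w^-$-part contributes nothing, neither to $\int w|u_n-u|^p\Phi_R$ nor to $\gamma_\infty$; for the nonnegative integrand $w^+|u_n-u|^p$ the squeeze of (i) applies verbatim and yields $\gamma_\infty$. Thus the decisive point in (ii)---and, implicitly, what makes $\gamma_\infty$ well behaved at all for sign-changing $w$---is the compactness in Theorem~\ref{aio}, used to annihilate the negative part of $w$.
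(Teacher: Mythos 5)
Your argument for (i) and (iii) is correct and is essentially the paper's own: Br\'ezis--Lieb to trade $u_n$ for $u_n-u$, then the squeeze $\chi_{B_{R+1}^c}\le\Phi_R\le\chi_{B_R^c}$. The paper applies Lemma \ref{BrezisLieb} on the sets $\{|x|\ge R\}$ and squeezes $\int|u_n|^{\ps}\Phi_R\,\dx$ afterwards, whereas you put $\Phi_R$ into the Br\'ezis--Lieb step and squeeze the $|u_n-u|$ term --- an immaterial difference --- and your explicit product-space formulation with the measure $\Phi_R(x)|h|^{-(N+sp)}\,\dx\,\mathrm{d}h$ simply fills in what the paper dismisses as ``similar arguments'' for (iii). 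The genuine divergence is in (ii): the paper again says ``similar arguments'', which is legitimate in the only situations where the proposition is invoked (Lemma \ref{l1} assumes $w\ge 0$, and Theorem \ref{T1} uses it through $w^+$), since for a non-negative weight the squeeze of (i) applies verbatim; you instead read the statement for a possibly sign-changing $w$, correctly observe that the squeeze then breaks, and repair it by splitting $w=w^{+}-w^{-}$ and killing the $w^{-}$ part with Theorem \ref{aio}. That repair is mathematically sound, but be aware it imports the hypothesis $w^{-}\in\Hc$, which belongs to the main theorems and not to the proposition as stated, so your (ii) proves a conditional variant rather than the statement the paper intends (namely the non-negative case). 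Two small points of rigour, shared with the paper rather than counting against you: the Br\'ezis--Lieb lemma needs a.e.\ convergence, and since $\nu_\infty,\gamma_\infty,\Gamma_\infty$ are $\limsup$'s along the \emph{full} sequence, the clean way is not a single unrelabelled extraction but the standard sub-subsequence argument showing that, for fixed $R$, the difference of the two integrals converges along the whole sequence; and your multiplication of the pointwise Br\'ezis--Lieb remainder by $\Phi_R$ uses the $L^1$ refinement of Lemma \ref{BrezisLieb} (available in the cited Br\'ezis--Lieb paper, or avoidable by applying the lemma directly on the measure $\Phi_R\,\dx$, resp.\ $|w|\,\dx$).
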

 \begin{proof}
Since $u_n \rightharpoonup u$ weakly in $\mathcal{D}^{s,p}(\mathbb{R}^{N})$. The Br\'{e}zis-Lieb Lemma \ref{BrezisLieb} yields
\begin{equation} \label{one}
    \overline{\lim\limits_{n \rightarrow \infty} }\bigg|\int_{|x|\geq R} |u_n -u|^{p_{s}^{*}}\ \dx - \int_{|x|\geq R} |u_n |^{p_{s}^{*}}\ \dx\bigg| = \int_{|x|\geq R} |u|^{p_{s}^{*}}\ \dx.
\end{equation}
Using the fact that limsup is sub-additive, we have
\begin{align} \label{two}
\begin{split}
     \left| \overline{\lim\limits_{n \rightarrow \infty}}\int_{|x|\geq R} |u_n -u|^{p_{s}^{*}}\ \dx - \overline{\lim\limits_{n \rightarrow \infty}}\int_{|x|\geq R} |u_n |^{p_{s}^{*}}\ \dx\right|
    \leq \overline{\lim\limits_{n \rightarrow \infty}} \left|\int_{|x|\geq R} |u_n -u|^{p_{s}^{*}}\ \dx - \int_{|x|\geq R} |u_n |^{p_{s}^{*}}\ \dx \right|.
\end{split}
\end{align}
From \eqref{one} and \eqref{two} we deduce that
\begin{align*}
    \left| \overline{\lim\limits_{n \rightarrow \infty}}\int_{|x|\geq R} |u_n -u|^{p_{s}^{*}}\ \dx - \overline{\lim\limits_{n \rightarrow \infty}}\int_{|x|\geq R} |u_n |^{p_{s}^{*}}\ \dx\right| \leq \int_{|x|\geq R} |u|^{p_{s}^{*}}\ \dx.
\end{align*}
Now taking the limit as $R \rightarrow \infty$ yields
\begin{align*}
    \left| \lim\limits_{R \rightarrow \infty}\overline{\lim\limits_{n \rightarrow \infty}}\int_{|x|\geq R} |u_n -u|^{p_{s}^{*}}\ \dx -\lim\limits_{R \rightarrow \infty} \overline{\lim\limits_{n \rightarrow \infty}}\int_{|x|\geq R} |u_n |^{p_{s}^{*}}\ \dx\right| =0.
\end{align*}
Thus we obtain,
\begin{equation} \label{three}
    \lim\limits_{R \rightarrow \infty} \overline{\lim\limits_{n \rightarrow \infty}}\int_{|x|\geq R} |u_n |^{p_{s}^{*}}\ \dx = \lim\limits_{R \rightarrow \infty}\overline{\lim\limits_{n \rightarrow \infty}}\int_{|x|\geq R} |u_n -u|^{p_{s}^{*}}\ \dx= \nu_\infty.
\end{equation}
Moreover, we have the following estimate
\begin{align*}
    \int_{|x|\geq R+1} |u_n |^{p_{s}^{*}}\ \dx =\int_{|x|\geq R+1} |u_n |^{p_{s}^{*}} \Phi_R\ \dx \leq \int_{\R^N} |u_n |^{p_{s}^{*}} \Phi_R\ \dx
    = \int_{|x|\geq R} |u_n |^{p_{s}^{*}} \Phi_R\ \dx \leq \int_{|x|\geq R} |u_n |^{p_{s}^{*}} \ \dx.
\end{align*}
From the above inequalities, we get
\begin{align*}
    \int_{|x|\geq R+1} |u_n |^{p_{s}^{*}}\ \dx \leq \int_{\R^N} |u_n |^{p_{s}^{*}} \Phi_R\ \dx \leq \int_{|x|\geq R} |u_n |^{p_{s}^{*}} \ \dx.
\end{align*}
By taking $n,R \rightarrow \infty$ and using \eqref{three}, we obtain
\begin{align*}
    \lim\limits_{R \rightarrow \infty} \overline{\lim\limits_{n \rightarrow \infty}}\int_{\R^N} |u_n |^{p_{s}^{*}} \Phi_R\ \dx = \lim\limits_{R \rightarrow \infty} \overline{\lim\limits_{n \rightarrow \infty}}\int_{|x|\geq R} |u_n |^{p_{s}^{*}}\ \dx = \nu_\infty.
\end{align*}
Hence, the proof of part $(i)$ is complete. Part $(ii)$ and $(iii)$ can be obtained by following similar arguments.
\end{proof}
The following lemma is due to \cite[Lemma 2.4]{Bonder} and \cite[Remark 2.5]{Bonder}.
\begin{lemma} \label{lem: 1}
Let $s\in (0,1),p\in (1,\frac{N}{s})$ and $\phi \in W^{1,\infty}(\R^N)$ with compact support.
Let $u_n \wra u$ in $\mathcal{D}^{s,p}(\R^N)$. Then
$$\displaystyle \lim_{n\ra \infty} \int_{\R^N} |u_n(x)-u(x)|^p |D^s \phi(x)|^p \dx = 0 \,.$$
\end{lemma}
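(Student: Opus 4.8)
The plan is to estimate the quantity $|D^s\phi(x)|^p$ pointwise, split the inner integral defining $|D^s(u_n-u)\cdot\text{(weight)}|$-type expressions according to the scale of $|h|$, and then exploit two facts: that $\phi$ is Lipschitz with compact support (so $|D^s\phi|\in L^p(\R^N)$ with support controlled), and that $u_n\to u$ strongly in $L^p_{\mathrm{loc}}(\R^N)$ by the compact Sobolev embedding on bounded sets (a consequence of $u_n\wra u$ in $\Dsp$ and $p<\ps$). First I would recall that for $\phi\in W^{1,\infty}(\R^N)$ with $\mathrm{supp}\,\phi\subseteq B_M$ one has the standard bound
\begin{align*}
    |D^s\phi(x)|^p = \int_{\R^N}\frac{|\phi(x+h)-\phi(x)|^p}{|h|^{N+sp}}\,\mathrm{d}h \leq C\big(\|\nabla\phi\|_\infty,M,N,s,p\big)\,\mathbf{1}_{B_{M+1}}(x) \cdot \omega(x),
\end{align*}
where $\omega\in L^1(\R^N)$; more precisely splitting the $h$-integral at $|h|=1$ gives a contribution $\lesssim \|\nabla\phi\|_\infty^p$ from $|h|\le 1$ and $\lesssim \|\phi\|_\infty^p$ from $|h|\ge 1$, and since $\phi$ is supported in $B_M$, $|D^s\phi(x)|$ is bounded on all of $\R^N$ and decays like $|x|^{-(N+sp)/p}$ for $|x|$ large, hence $|D^s\phi|^p\in L^{1}(\R^N)\cap L^\infty(\R^N)$. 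In particular $|D^s\phi|^p$ is integrable, and for any $\epsilon>0$ we may choose $R$ large so that $\int_{|x|\ge R}|D^s\phi|^p\,\dx<\epsilon$.

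Next I would split the target integral as $\int_{|x|<R} + \int_{|x|\ge R}$. On the outer region $|x|\ge R$, since $(u_n)$ is bounded in $\Dsp\hookrightarrow L^{\ps}(\R^N)$ and hence bounded in $L^p$ on any fixed annulus is not automatic — so instead I would use H\"older with exponents $\ps/p$ and its conjugate:
\begin{align*}
    \int_{|x|\ge R}|u_n-u|^p|D^s\phi|^p\,\dx \leq \Big(\int_{\R^N}|u_n-u|^{\ps}\,\dx\Big)^{p/\ps}\Big(\int_{|x|\ge R}|D^s\phi|^{p\cdot\frac{\ps}{\ps-p}}\,\dx\Big)^{\frac{\ps-p}{\ps}},
\end{align*}
and the first factor is bounded uniformly in $n$ while the second is small for $R$ large because $|D^s\phi|^p$ has a tail in every $L^q$, $q\ge 1$ (it is bounded and integrable, hence in all intermediate $L^q$). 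On the inner region $|x|<R$, again by H\"older with exponents $\ps/p$ and conjugate, $\int_{|x|<R}|u_n-u|^p|D^s\phi|^p\,\dx \le \|u_n-u\|_{L^{\ps}(B_R)}^p \, \||D^s\phi|^p\|_{L^{\ps/(\ps-p)}(B_R)}$; but $\|u_n-u\|_{L^{\ps}(B_R)}$ need not go to zero. The fix is to interpolate: $|D^s\phi|^p\in L^\infty$, so $\int_{|x|<R}|u_n-u|^p|D^s\phi|^p\,\dx \le \||D^s\phi|^p\|_\infty \int_{B_R}|u_n-u|^p\,\dx$, and now $\int_{B_R}|u_n-u|^p\,\dx\to 0$ by the compact embedding $\Dsp\hookrightarrow\hookrightarrow L^p(B_R)$ (valid since $B_R$ is bounded and $p<\ps$), using $u_n\wra u$. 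Combining: given $\epsilon$, fix $R$ making the tail $<\epsilon$, then let $n\to\infty$ to kill the inner part; this yields $\limsup_n \int_{\R^N}|u_n-u|^p|D^s\phi|^p\,\dx \le C\epsilon$, and since $\epsilon$ is arbitrary the limit is zero.

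The main obstacle is the justification that $|D^s\phi|^p$ is genuinely integrable with good decay — i.e. the pointwise bound on $|D^s\phi(x)|$ for $x$ outside a neighbourhood of $\mathrm{supp}\,\phi$, where only the far part of the $h$-integral contributes and one gets $|D^s\phi(x)|^p \lesssim \|\phi\|_\infty^p \,\mathrm{dist}(x,\mathrm{supp}\,\phi)^{-(N+sp)}$ — together with the uniform boundedness of $(u_n)$ in $L^{\ps}$. Both are standard: the former is a direct computation splitting $\R^N_h$ into $\{|h|\le \tfrac12\mathrm{dist}(x,\mathrm{supp}\,\phi)\}$ (where $\phi(x+h)=\phi(x)=0$) and its complement; the latter follows from the fractional Sobolev inequality since $u_n\wra u$ in $\Dsp$ implies $\sup_n\|u_n\|_{s,p}<\infty$. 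Once these are in place, the H\"older-plus-compact-embedding splitting above closes the argument. Alternatively, one can cite \cite[Lemma 2.4]{Bonder} and \cite[Remark 2.5]{Bonder} directly, as the statement is taken verbatim from there.
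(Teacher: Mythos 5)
Your argument is correct, but note that the paper does not actually prove this lemma: it simply cites \cite[Lemma 2.4]{Bonder} together with \cite[Remark 2.5]{Bonder}, and the decay bound $|D^s\phi(x)|^p\le C\min\{1,|x|^{-(N+sp)}\}$ on which your proof rests is itself quoted in the paper's Remark \ref{infestimate} from \cite[Lemma 2.2]{Bonder}. So what you have written is a self-contained proof of the fact the paper imports by citation, and the mechanism is the right one: $|D^s\phi|^p\in L^1(\R^N)\cap L^\infty(\R^N)\subset L^{N/(sp)}(\R^N)$, so the tail over $\{x:|x|\ge R\}$ is controlled by H\"older with exponents $\ps/p$ and $N/(sp)$ together with the uniform bound $\sup_n\|u_n-u\|_{\ps}<\infty$ (weak convergence gives $\sup_n\|u_n\|_{s,p}<\infty$, then the fractional Sobolev inequality), while on $B_R$ the sup bound on $|D^s\phi|^p$ and the compact embedding of bounded sequences of $\mathcal{D}^{s,p}(\R^N)$ into $L^p(B_R)$ (with identification of the limit as $u$) kill the bulk term. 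Your route buys self-containedness at the cost of reproving the quoted result of Bonder--Saintier--Silva; the paper's route is only a citation.

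One step should be tightened. The splitting you describe for the decay estimate --- the near region $\{h:|h|\le\tfrac12\,\mathrm{dist}(x,\mathrm{supp}\,\phi)\}$, where the integrand vanishes, and its complement bounded by $\|\phi\|_\infty^p|h|^{-(N+sp)}$ --- only yields decay of order $\mathrm{dist}(x,\mathrm{supp}\,\phi)^{-sp}$ after integration in $h$, which is not enough for $|D^s\phi|^p\in L^1(\R^N)$ (recall $sp<N$). To obtain the rate $\mathrm{dist}(x,\mathrm{supp}\,\phi)^{-(N+sp)}$ that you state and genuinely need, observe instead that for $x$ outside a neighbourhood of $\mathrm{supp}\,\phi$ the integrand is supported on $\{h: x+h\in\mathrm{supp}\,\phi\}$, a set of finite measure on which $|h|\ge\mathrm{dist}(x,\mathrm{supp}\,\phi)$, so that $|D^s\phi(x)|^p\le\|\phi\|_\infty^p\,|\mathrm{supp}\,\phi|\,\mathrm{dist}(x,\mathrm{supp}\,\phi)^{-(N+sp)}$. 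With that one-line correction (or by invoking \cite[Lemma 2.2]{Bonder} as the paper's Remark \ref{infestimate} does), your proof is complete.
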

\begin{remark} \label{infestimate} \rm 
Notice that for $\phi \in W^{1,\infty}(\R^N)$ with compact support, by \cite[Lemma 2.2]{Bonder}, we have $|D^s \phi| \in L^{\infty}(\R^N).$ Moreover,
\begin{align*}
    |D^s \phi(x)|^p \leq \text{C} \min\{1,|x|^{-(N+sp)}\} \,,
\end{align*}
where $\text{C}>0$ depends on $N,s,p$ and $\|\phi\|_{W^{1,\infty}}$.
Consequently, $|D^s \phi| \in L^p(\R^N).$
Now, let $\psi \in C_b^{\infty}(\R^N)$ be such that $0\leq \psi\leq 1$, $\psi=0$ on $B_1(0)$, and $\psi =1$ on $B_2(0)^c$. Then, $\phi := 1-\psi \in W^{1,\infty}(\R^N)$ with support in $B_2(0)$ and $|D^s \psi|^p = |D^s \phi|^p$. Thus,
\begin{align*}
    |D^s \psi(x)|^p \leq \text{C} \min\{1,|x|^{-(N+sp)}\} \,,
\end{align*}
where $\text{C}>0$ depends on $N,s,p$ and $\|\psi\|_{W^{1,\infty}}$. Now the weight function $w= |D^s \psi|^p$ also verifies the hypothesis of \cite[Lemma 2.4]{Bonder} with $q=p$. Hence, we conclude that Lemma \ref{lem: 1} also holds if we replace $\phi$ with  $\psi$.
\end{remark} 
\begin{corollary} \label{passing_limit}
Let $u_n \wra u$ in $\mathcal{D}^{s,p}(\R^N)$. Let $\phi \in W^{1,\infty}(\R^N)$ with compact support or $\phi \in C_b^{\infty}(\R^N)$ with $0 \leq \phi \leq 1$, $\phi = 0$ on $B_1(0)$ and $\phi =1$ on $B_2(0)^c$. Then, for $v_n=(u_n-u)\phi$, we have
\begin{align*}
 \lim_{n\ra \infty} &\iint_{\R^{2N}} \frac{|v_n(x)-v_n(y)|^p}{|x-y|^{N+sp}} \dxy \,  \leq  \ \lim_{n\ra \infty} \iint_{\R^{2N} } |\phi(y)|^p\frac{|(u_n-u)(x)-(u_n-u)(y)|^p}{|x-y|^{N+sp}} \dxy . 
\end{align*}
\end{corollary}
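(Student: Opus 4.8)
The plan is to use the algebraic splitting of the increments of $v_n$ into a part carrying the weight $\phi(y)$ and a remainder controlled by the $(s,p)$-gradient of $\phi$, and then to kill the remainder via Lemma \ref{lem: 1}. Set $z_n := u_n - u$, so that $z_n \wra 0$ in $\Dsp$ and $v_n = z_n\phi$. For a.e.\ $x,y \in \R^N$ we have the identity
\begin{align*}
v_n(x) - v_n(y) = \phi(y)\big(z_n(x) - z_n(y)\big) + z_n(x)\big(\phi(x) - \phi(y)\big),
\end{align*}
whence $|v_n(x)-v_n(y)| \le |\phi(y)|\,|z_n(x)-z_n(y)| + |z_n(x)|\,|\phi(x)-\phi(y)|$.

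Next I would invoke the elementary inequality that for every $\varepsilon>0$ there is a constant $C_\varepsilon>0$ with $(a+b)^p \le (1+\varepsilon)a^p + C_\varepsilon b^p$ for all $a,b\ge 0$. Applying this with $a=|\phi(y)|\,|z_n(x)-z_n(y)|$ and $b=|z_n(x)|\,|\phi(x)-\phi(y)|$, then dividing by $|x-y|^{N+sp}$ and integrating over $\R^{2N}$, one obtains
\begin{align*}
\iint_{\R^{2N}} \frac{|v_n(x)-v_n(y)|^p}{|x-y|^{N+sp}}\,\dxy
\le (1+\varepsilon)\iint_{\R^{2N}} |\phi(y)|^p\,\frac{|z_n(x)-z_n(y)|^p}{|x-y|^{N+sp}}\,\dxy + C_\varepsilon\, R_n,
\end{align*}
where, by Tonelli's theorem and the definition of $|D^s\phi|$,
\begin{align*}
R_n := \iint_{\R^{2N}} \frac{|z_n(x)|^p|\phi(x)-\phi(y)|^p}{|x-y|^{N+sp}}\,\dxy = \int_{\R^N} |u_n(x)-u(x)|^p\,|D^s\phi(x)|^p\,\dx.
\end{align*}

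Since $\phi$ is either a compactly supported $W^{1,\infty}$ function or the cut-off described in the statement, Lemma \ref{lem: 1} applies in the first case directly and in the second case via Remark \ref{infestimate}, giving $R_n \to 0$ as $n\to\infty$. Taking $\limsup_{n\to\infty}$ in the last displayed estimate the term $C_\varepsilon R_n$ disappears, and then letting $\varepsilon \downarrow 0$ yields
\begin{align*}
\limsup_{n\to\infty}\iint_{\R^{2N}} \frac{|v_n(x)-v_n(y)|^p}{|x-y|^{N+sp}}\,\dxy \le \limsup_{n\to\infty}\iint_{\R^{2N}} |\phi(y)|^p\,\frac{|z_n(x)-z_n(y)|^p}{|x-y|^{N+sp}}\,\dxy,
\end{align*}
which is the claimed inequality (the limits being read as $\limsup$'s, or as genuine limits whenever these exist). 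I do not expect a real obstacle here: the two delicate points are choosing the $\varepsilon$-dependent splitting so that the cross term is \emph{absorbed} rather than merely estimated, and verifying that $R_n$ falls under Lemma \ref{lem: 1} for the cut-off vanishing on $B_1(0)$, which is precisely what Remark \ref{infestimate} provides.
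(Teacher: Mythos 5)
Your proof is correct and follows essentially the same route as the paper: the same product decomposition $v_n(x)-v_n(y)=\phi(y)\big((u_n-u)(x)-(u_n-u)(y)\big)+(u_n-u)(x)\big(\phi(x)-\phi(y)\big)$, reduction of the error term to $\int_{\R^N}|u_n-u|^p|D^s\phi|^p\dx$, and Lemma \ref{lem: 1} together with Remark \ref{infestimate} to send it to zero. The only difference is cosmetic: the paper absorbs the cross term via Minkowski's inequality for the Gagliardo seminorm and then takes limits, whereas you use the pointwise $(1+\varepsilon)$-Young inequality and let $\varepsilon\downarrow 0$ after the $\limsup$; both are equally valid.
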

\begin{proof}
Let $w_n=u_n-u$. Then $v_n=w_n \phi$. Applying Minkowski's inequality and subsequently changing variables gives the following inequalities
\begin{align*}
  \|v_n\|_{s,p} & \leq \left(  \iint_{\R^{2N}} \frac{|w_n(x)-w_n(y)|^p}{|x-y|^{N+sp}} |\phi(y)|^p\,\dxy \right)^{\frac{1}{p}} +\left(  \iint_{\R^{2N}} \frac{|\phi(x)-\phi(y)|^p}{|x-y|^{N+sp}} |w_n(x)|^p\,\dxy \right)^{\frac{1}{p}} \\
  &\leq \left(  \iint_{\R^{2N}} \frac{|w_n(x)-w_n(y)|^p}{|x-y|^{N+sp}} |\phi(y)|^p\,\dxy \right)^{\frac{1}{p}} +\left(  \iint_{\R^{2N}} |w_n(x)|^p |D^s \phi(x)|^p\,\dx \right)^{\frac{1}{p}}.
\end{align*}
Notice that the second integral in the above inequality tends to $0$ as $n \rightarrow \infty$ using Lemma \ref{lem: 1} and Remark \ref{infestimate}. Hence, by taking limit as $n \rightarrow \infty$ in the above inequality, we get
\begin{align*}
     \lim_{n\ra \infty} \iint_{\R^{2N}} \frac{|v_n(x)-v_n(y)|^p}{|x-y|^{N+sp}} \dxy \leq \lim_{n\ra \infty}  \iint_{\R^{2N}} \frac{|w_n(x)-w_n(y)|^p}{|x-y|^{N+sp}} |\phi(y)|^p\,\dxy,
\end{align*}
which gives the desired result. 
\end{proof}
\begin{proposition}
Let G be a closed subgroup of $\mathcal{O}(N)$, $w \in \H$ be a non-negative $G$-invariant Hardy potential, and $(u_n)$ be a sequence in $\mathcal{D}_G^{s,p}(\R^N)$ such that $u_n \rightharpoonup u$ in $\mathcal{D}_G^{s,p}(\R^N)$. Then the following statements are true:
\begin{itemize}
    \item[(i)] there exists a countable set $\mathbb{K}$ such that $ \nu = \sum\limits_{k \in \mathbb{K}} \nu_k \delta_{x_k}$, where $\nu_k \in (0, \infty), x_k \in \R^N$. In particular, $\nu$ is supported on the countable set $A_{\mathbb{K}}:= \{x_k \in \R^N : k \in \mathbb{K}\}$,
    \item[(ii)] $\gamma$ is supported on $\overline{\Sigma_w}$.
\end{itemize}
\end{proposition}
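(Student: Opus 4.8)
For \emph{(i)}, the plan is to reduce the statement to Lions' concentration lemma (Lemma~\ref{Lions lemma}) applied to the pair $(\nu,\Gamma)$. I would fix $\phi\in C_c^{\infty}(\R^N)$, apply the fractional Sobolev inequality \cite[Theorem 6.5]{DNPV2012} to $\phi(u_n-u)\in\Dsp$ to get $\|\phi(u_n-u)\|_{\ps}\le C\,\|\phi(u_n-u)\|_{s,p}$ with $C=C(N,s,p)$, and then pass to the limit in $n$. Writing $w_n:=u_n-u$ and integrating in $x$ first, the right-hand side in Corollary~\ref{passing_limit} equals $\iint_{\R^{2N}}|\phi(y)|^p\frac{|w_n(x)-w_n(y)|^p}{|x-y|^{N+sp}}\dxy=\int_{\R^N}|\phi|^p|D^sw_n|^p\,\dx=\int_{\R^N}|\phi|^p\,\mathrm{d}\Gamma_n$, so Corollary~\ref{passing_limit} together with $\Gamma_n\wrastar\Gamma$ and $|\phi|^p\in C_0(\R^N)$ yields $\limsup_n\|\phi(u_n-u)\|_{s,p}^p\le\int_{\R^N}|\phi|^p\,\mathrm{d}\Gamma$, whereas $\|\phi(u_n-u)\|_{\ps}^{\ps}=\int_{\R^N}|\phi|^{\ps}\,\mathrm{d}\nu_n\to\int_{\R^N}|\phi|^{\ps}\,\mathrm{d}\nu$. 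Combining these three facts gives
\begin{align*}
\Big(\int_{\R^N}|\phi|^{\ps}\,\mathrm{d}\nu\Big)^{\frac{1}{\ps}}\le C\,\Big(\int_{\R^N}|\phi|^{p}\,\mathrm{d}\Gamma\Big)^{\frac{1}{p}}\qquad\text{for all }\phi\in C_c^{\infty}(\R^N),
\end{align*}
and since $1\le p<\ps<\infty$ and $\nu,\Gamma$ are finite non-negative measures (note $\Gamma_n(\R^N)=\|u_n-u\|_{s,p}^p$ is bounded), Lemma~\ref{Lions lemma} produces a countable set $A_{\mathbb K}=\{x_k:k\in\mathbb K\}$ and $\nu_k\in(0,\infty)$ with $\nu=\sum_{k\in\mathbb K}\nu_k\delta_{x_k}$; in particular $\nu$ is supported on $A_{\mathbb K}$.

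For \emph{(ii)}, it suffices to prove $\gamma(\R^N\setminus\overline{\Sigma_w})=0$. Since $\R^N\setminus\overline{\Sigma_w}$ is open, by second countability it is a countable union of open balls $Q$ with $\overline Q$ compact and $\overline Q\cap\overline{\Sigma_w}=\emptyset$, and hence it is enough to show $\gamma(Q)=0$ for one such $Q$. Fix $\epsilon>0$. For every $x\in\overline Q$ we have $x\notin\Sigma_w$, so $\lambda_1(w,x)=+\infty$ and there is $r_x>0$ with $\int_{\R^N}|w|\,|\psi|^p\,\dx\le\epsilon\,\|\psi\|_{s,p}^p$ for all $\psi\in\mathcal{D}_0^{s,p}(B_{r_x}(x))$. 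The key point is patching these local bounds: by the Besicovitch covering lemma I would extract a countable — hence, $\overline Q$ being compact, finite — subfamily $\{B_j\}_{j=1}^{M}$ of $\{B_{r_x}(x)\}_{x\in\overline Q}$ covering $\overline Q$ with overlap $\sum_{j}\mathbf{1}_{B_j}\le N_0=N_0(N)$, take a subordinate partition of unity $\chi_j\in C_c^{\infty}(B_j)$, $0\le\chi_j\le1$, $\sum_j\chi_j\equiv1$ on $\overline Q$, and, for $\phi\in C_c^{\infty}(Q)$ with $0\le\phi\le1$, set $v_n:=(u_n-u)\phi=\sum_j v_n\chi_j$ with each $v_n\chi_j\in\mathcal{D}_0^{s,p}(B_j)$. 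Since at most $N_0$ of the $\chi_j$ are non-zero at any point, $|v_n|^p\le N_0^{p-1}\sum_j|v_n\chi_j|^p$ pointwise, so (using $w\ge0$)
\begin{align*}
\int_{\R^N}w\,|v_n|^p\,\dx\ \le\ N_0^{p-1}\sum_{j=1}^{M}\int_{\R^N}w\,|v_n\chi_j|^p\,\dx\ \le\ N_0^{p-1}\epsilon\sum_{j=1}^{M}\|v_n\chi_j\|_{s,p}^p.
\end{align*}
Letting $n\to\infty$, Corollary~\ref{passing_limit} applied to $\phi\chi_j\in W^{1,\infty}(\R^N)$ with compact support and the inequality $\sum_j\chi_j^p\le1$ on $\mathrm{supp}\,\phi$ bound the right-hand side by $N_0^{p-1}\epsilon\int_{\R^N}|\phi|^p\,\mathrm{d}\Gamma\le N_0^{p-1}\epsilon\,\|\Gamma\|$, while the left-hand side equals $\int_{\R^N}|\phi|^p\,\mathrm{d}\gamma_n\to\int_{\R^N}|\phi|^p\,\mathrm{d}\gamma$. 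Thus $\int_{\R^N}|\phi|^p\,\mathrm{d}\gamma\le N_0^{p-1}\epsilon\,\|\Gamma\|$; since $\epsilon>0$ is arbitrary and $N_0,\|\Gamma\|$ are fixed, $\int_{\R^N}|\phi|^p\,\mathrm{d}\gamma=0$ for all such $\phi$, and choosing $\phi\equiv1$ on an arbitrary compact subset of $Q$ gives $\gamma(Q)=0$.

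The main obstacle is the patching step in \emph{(ii)}: the smallness of $w$ relative to the Gagliardo energy is available only on \emph{shrinking} balls around points of $\R^N\setminus\overline{\Sigma_w}$, so a test function supported on the fixed ball $Q$ must be decomposed into pieces supported on very small balls, and the constant in $\big|\sum_j a_j\big|^p\le C\sum_j|a_j|^p$ arising from this decomposition must be kept independent of $\epsilon$ — which is why a bounded-overlap (Besicovitch-type) covering is required rather than an arbitrary finite subcover. Part \emph{(i)} and the remaining estimates in \emph{(ii)} are then routine bookkeeping with weak$^*$ convergence, Corollary~\ref{passing_limit}, and inner regularity of the measures.
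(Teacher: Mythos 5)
Your proof is correct, and it departs from the paper in a genuinely different way in part (ii), while part (i) follows the paper's scheme. For (i) the paper also tests with $(u_n-u)\phi$, passes to the limit via Corollary~\ref{passing_limit} and the weak$^*$ convergences in \eqref{weak star convergence of measures}, and invokes Lemma~\ref{Lions lemma}; the only difference is that you use the fractional Sobolev constant where the paper uses $\Bg$ as the constant in the reverse H\"older inequality between $\nu$ and $\Gamma$. Your choice is in fact a bit cleaner: it gives the inequality for \emph{all} $\phi\in C_c^{\infty}(\R^N)$ (as the hypothesis of Lemma~\ref{Lions lemma} formally demands, whereas the paper derives it only for $G$-invariant $\phi$), and it makes (i) manifestly independent of $w$, $\lambda$ and $G$. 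For (ii) the two routes diverge: the paper first shows $\int|\phi|^p\,\mathrm{d}\gamma\le\lambda_1(w)^{-1}\int|\phi|^p\,\mathrm{d}\Gamma$, upgrades this to $\gamma(E)\le\Gamma(E)/\lambda_1(w)$ on Borel sets via Proposition~\ref{defmeasure}, deduces $\gamma\ll\Gamma$, and then uses Radon--Nikodym together with differentiation of $\gamma$ with respect to $\Gamma$ and the localized bound $\gamma(B_r(x))\le\Gamma(B_r(x))/\lambda_1(w,B_r(x))$ to conclude that the density vanishes off $\Sigma_w$; you instead prove directly that $\gamma$ charges no ball whose closure misses $\overline{\Sigma_w}$, patching the local smallness $\int|w||\psi|^p\le\epsilon\|\psi\|_{s,p}^p$ (available near each point with $\lambda_1(w,x)=+\infty$) through a Besicovitch bounded-overlap cover and a subordinate partition of unity, which is exactly what keeps the constant $N_0^{p-1}$ independent of $\epsilon$. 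What each buys: your argument avoids differentiating one Radon measure with respect to another (the most delicate ingredient in the paper's proof, quoted from Federer) and is thus more elementary and self-contained, at the cost of the covering machinery; the paper's argument is shorter once that differentiation theorem is granted and also records the quantitative bound \eqref{measureinequality1}, which is reused later. Two small points you should state explicitly, though neither is a gap: that $(u_n-u)\phi\chi_j\in\mathcal{D}^{s,p}_0(B_j)$ (multiplication by a $C_c^{\infty}(B_j)$ cut-off maps $\Dsp$ into $\mathcal{D}^{s,p}_0(B_j)$, by the product estimate underlying Corollary~\ref{passing_limit} plus density — the paper uses the same fact implicitly), and that $\int|\phi|^p\,\mathrm{d}\gamma=0$ for all admissible $\phi$ yields $\gamma(Q)=0$ by Proposition~\ref{defmeasure} together with the positivity of $\gamma$ (here $w\ge0$).
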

\begin{proof}
$(i)$ For $\lambda \in (0, \lambda_1(w))$, the definition of $\mathcal{B}^{G}_{\lambda,w}(\mathbb{R}^N)$ yields
\begin{align} \label{p1eq1}
    \mathcal{B}^{G}_{\lambda,w}(\mathbb{R}^N) &\leq \frac{\displaystyle\iint_{\mathbb{R}^{2N}} \frac{|v_n(x)-v_n(y)|^p}{|x-y|^{N+sp}} \,\dxy - \lambda \displaystyle\int_{\mathbb{R}^{N}} w|v_n|^p \,\dx}{ \bigg[ \displaystyle\int_{\mathbb{R}^{N}} |v_n|^{p_{s}^{*}} \,\dx \bigg]^{\frac{p}{p_{s}^{*}}}} \leq \frac{\displaystyle\iint_{\mathbb{R}^{2N}} \frac{|v_n(x)-v_n(y)|^p}{|x-y|^{N+sp}} \,\dxy}{ \bigg[ \displaystyle\int_{\mathbb{R}^{N}} |v_n|^{p_{s}^{*}} \,\dx \bigg]^{\frac{p}{p_{s}^{*}}}},
\end{align}
where $v_n$ is defined as $v_n = (u_n -u) \phi$, for any $G$-invariant $\phi \in C_{c}^{\infty}(\mathbb{R}^N)$. Taking limit as $n \rightarrow \infty$ in \eqref{p1eq1} and using Corollary \ref{passing_limit}, we obtain
\begin{align*}
    \mathcal{B}^{G}_{\lambda,w}(\mathbb{R}^N) &\lim\limits_{n \rightarrow \infty}\bigg[ \int_{\mathbb{R}^{N}} |(u_n -u) \phi|^{p_{s}^{*}} \,\dx \bigg]^{\frac{p}{p_{s}^{*}}} \leq \lim\limits_{n \rightarrow \infty} \iint_{\mathbb{R}^{2N}} |\phi(y)|^p\frac{|(u_n-u)(x)-(u_n-u)(y)|^p}{|x-y|^{N+sp}} \dxy.  \,
\end{align*}
In other words, we have
\begin{align*}
    \mathcal{B}^{G}_{\lambda,w}(\mathbb{R}^N)\bigg[ \int_{\mathbb{R}^{N}} |\phi|^{p_{s}^{*}} \,\mathrm{d}\nu \bigg]^{\frac{p}{p_{s}^{*}}} \leq \int_{\mathbb{R}^{N}} |\phi|^p \mathrm{d}\Gamma\, .
\end{align*}
Now by Proposition \ref{Lions lemma}, there exists a countable set $ \{x_k \in \mathbb{R}^N : k \in\mathbb{K} \}$ and $\nu_k \in (0, \infty)$ such that 
   $ \nu = \sum_{k \in \mathbb{K}} \nu_k \delta_{x_k}.$
Hence, the measure $\nu$ is supported on the countable set $A_{\mathbb{K}} = \left\{x_k \in \mathbb{R}^N : k \in\mathbb{K} \right\}$, which completes the proof. \\
$(ii)$ For $G$-invariant $\phi \in C_c^{\infty}(\R^N)$, $v_n=(u_n-u) \phi \in \D_{G}^{s,p}(\R^N)$. Thus, we have
\begin{align*}
  \int_{\R^N} |\phi|^p \ \mathrm{d} \gamma_n  =  \int_{\R^N} w|(u_n-u)\phi|^p \dx \leq  \frac{1}{\lambda_1(w)}  \iint_{\R^{2N}} \frac{|v_n(x)-v_n(y)|^p}{|x-y|^{N+sp}} \dxy.
\end{align*}
Taking $n \ra \infty$ and using Corollary \ref{passing_limit}, we obtain
 \begin{align} \label{forrmk}
     \int_{\R^N} |\phi|^p \ \mathrm{d} \gamma \leq \frac{1}{\lambda_1(w)}  \int_{\R^N} |\phi|^p \ \mathrm{d} \Gamma .
 \end{align}
By Proposition \ref{defmeasure}, we get 
 \begin{equation}\label{measureinequality1}
   \gamma(E) \leq \frac{\Gamma (E)}{\lambda_1(w)} ,  \  \forall E \in \mathbb{B}(\R^N).
  \end{equation} 
In particular, $\gamma \ll \Gamma$ and hence by Radon-Nikodym theorem, 
 \begin{equation} \label{measureinequality}
  \gamma(E) = \int_E  \frac{\mathrm{d} \gamma}{\mathrm{d} \Gamma} \  \mathrm{d}\Gamma \ , \forall E \in \mathbb{B}(\R^N). 
 \end{equation} 
Further, by Lebesgue differentiation theorem (page 152-168 of \cite{Federer}), we have 
 \begin{equation} \label{Lebdiff}
  \frac{\mathrm{d} \gamma}{\mathrm{d} \Gamma}(x) = \lim_{r \ra 0} \frac{\gamma (B_r(x))}{\Gamma (B_r(x))}.
 \end{equation}
 Now replacing $w$ by $w \chi_{B_r(x)}$ and proceeding as before,
 \begin{align*}
   \gamma(B_r(x)) \leq \frac{\Gamma (B_r(x))}{\lambda_1(w,B_r(x))}.  
 \end{align*}
From \eqref{Lebdiff}, we get 
\begin{equation} \label{21}
 \frac{\mathrm{d} \gamma}{\mathrm{d} \Gamma} (x) \leq \frac{1}{\lambda_1(w,x)}.
\end{equation} 
Since $\lambda_1(w,x)< +\infty$ for each $x \in \Sigma_w$, from \eqref{measureinequality} and \eqref{21} it follows that $\gamma$ is supported on $\overline{\Sigma_w}$.
\end{proof}
Now it is clear that $\nu$ is supported on the countable set
\begin{align*}
    A_{\mathbb{K}} = \left\{x_k \in \mathbb{R}^N : k \in\mathbb{K} \right\}.
\end{align*} Therefore, we define the following restricted measures:
\begin{align*}
    \Gamma_{A_{\mathbb{K}}} = \sum_{k \in \mathbb{K}}\Gamma_{k} \delta_{x_k}, \ \gamma_{A_{\mathbb{K}}} = \sum_{k \in \mathbb{K}}\gamma_{k} \delta_{x_k}, \ \text{and}\ \zeta_{\lambda} =\Gamma_{A_{\mathbb{K}}} - \lambda \gamma_{A_{\mathbb{K}}},
\end{align*}
for $\lambda \in (0, \lambda_1(w))$. Thus, we have the following proposition.
\begin{proposition} \label{p1 with u zero}
    Let G be a closed subgroup of $\mathcal{O}(N)$, $w \in \H$ be a non-negative $G$-invariant Hardy potential, and $(u_n)$ be a sequence in $\mathcal{D}_G^{s,p}(\R^N)$ such that $u_n \rightharpoonup u$ in $\mathcal{D}_G^{s,p}(\R^N)$. If $u=0$ and $\mathcal{B}^{G}_{\lambda,w}(\mathbb{R}^N) \|\nu\|^{\frac{p}{p_{s}^{*}}} = \|\zeta_{\lambda}\|$ for some $\lambda \in (0, \lambda_1(w))$, then $\nu$ is either zero or concentrated on a single finite $G$-orbit in $\mathbb{R}^N$.
\end{proposition}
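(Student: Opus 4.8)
The plan is to combine the concentration-compactness inequality from the previous proposition with the equality hypothesis and a cutoff-and-test argument, exploiting $G$-invariance to force the support of $\nu$ to be a single orbit. Since $u=0$, we have $\nu_n = \int_E |u_n|^{\ps}\dx \wrastar \nu = \sum_{k\in\mathbb K}\nu_k\delta_{x_k}$ and the measure $\Gamma_{A_{\mathbb K}}$, $\gamma_{A_{\mathbb K}}$ pick up only the atomic parts of $\Gamma$, $\gamma$ at the points $x_k$. The first step is to record the local version of the inequality derived in the preceding proposition: for each $k\in\mathbb K$ one has $\mathcal B^G_{\lambda,w}(\R^N)\,\nu_k^{p/\ps} \le \Gamma_k - \lambda\gamma_k$ (obtained by testing with $G$-invariant cutoffs concentrating near the orbit $Gx_k$, using that $\gamma\le\Gamma/\lambda_1(w)$ so $\Gamma_k-\lambda\gamma_k>0$). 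Summing over $k$, using $\ps>p$ so that $t\mapsto t^{p/\ps}$ is strictly concave/subadditive, gives
\begin{align*}
    \mathcal B^G_{\lambda,w}(\R^N)\,\|\nu\|^{p/\ps}
    = \mathcal B^G_{\lambda,w}(\R^N)\Big(\sum_{k}\nu_k\Big)^{p/\ps}
    \le \mathcal B^G_{\lambda,w}(\R^N)\sum_{k}\nu_k^{p/\ps}
    \le \sum_{k}(\Gamma_k-\lambda\gamma_k) = \|\zeta_\lambda\|.
\end{align*}

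Next I would invoke the equality hypothesis $\mathcal B^G_{\lambda,w}(\R^N)\|\nu\|^{p/\ps}=\|\zeta_\lambda\|$: this forces equality in both inequalities above. Equality in the subadditivity $\big(\sum\nu_k\big)^{p/\ps}=\sum\nu_k^{p/\ps}$ with $p/\ps<1$ is only possible if at most one $\nu_k$ is nonzero; so either $\nu=0$ or $\nu=\nu_{k_0}\delta_{x_{k_0}}$ is a single atom. Now the $G$-invariance enters decisively: since each $u_n$ is $G$-invariant, the measure $\nu_n$ is $G$-invariant, hence so is the weak$^*$ limit $\nu$. Therefore $\nu$ must be constant on $G$-orbits, i.e. $\nu$ is supported on $Gx_{k_0}$ and assigns the same mass to every point of that orbit. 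But we just showed $\nu$ has at most one atom; the only way a $G$-invariant atomic measure can have a single atom is if the orbit $Gx_{k_0}$ is itself finite — and then, by invariance, $\nu$ must in fact be spread equally over all of $Gx_{k_0}$, contradicting "single atom" unless $|Gx_{k_0}|=1$. So one concludes that $Gx_{k_0}$ is finite and $\nu=\frac{\|\nu\|}{|Gx_{k_0}|}\sum_{x\in Gx_{k_0}}\delta_x$, i.e. $\nu$ is concentrated on a single finite $G$-orbit.

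The point that needs the most care is reconciling the two conclusions "at most one atom" and "$G$-invariant (hence orbit-constant)": the resolution is that the local estimate $\mathcal B^G_{\lambda,w}(\R^N)\nu_k^{p/\ps}\le\Gamma_k-\lambda\gamma_k$ should be applied not point-by-point but orbit-by-orbit — one tests with a $G$-invariant cutoff supported near the whole orbit $Gx_k$, so the quantity being compared is the total mass $\nu(Gx_k)$ rather than the individual $\nu_k$. Indexing the distinct orbits as $\{O_j\}_{j\in\mathbb J}$ and writing $m_j=\nu(O_j)$, $P_j=\Gamma(O_j)$, $Q_j=\gamma(O_j)$, the $G$-invariant test-function argument yields $\mathcal B^G_{\lambda,w}(\R^N)\,m_j^{p/\ps}\le P_j-\lambda Q_j$ for each $j$; summing and using strict subadditivity of $t\mapsto t^{p/\ps}$ together with the equality hypothesis forces exactly one orbit $O_{j_0}$ to carry all the mass. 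Finiteness of $O_{j_0}$ is needed for the normalizing constant in $\Gamma_{A_{\mathbb K}}$, $\gamma_{A_{\mathbb K}}$ to make sense as finite sums (if the orbit were infinite, $\nu$ being a nonzero $G$-invariant measure equidistributed on it would contradict $\|\nu\|<\infty$); this is where the hypothesis that $\nu$ is a genuine finite measure closes the argument, giving that $\nu$ is either zero or concentrated on a single finite $G$-orbit.
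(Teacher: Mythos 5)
Your corrected, orbit-by-orbit version (second paragraph) is a reasonable variant and genuinely differs in mechanism from the paper's proof: the paper never sums orbit-wise estimates, but instead upgrades the test-function inequality to a measure inequality on $G$-invariant Borel sets, restricts it to $A_{\mathbb{K}}$ so that $\Gamma-\lambda\gamma$ is replaced by $\zeta_\lambda$, and then combines this with H\"older's inequality $\zeta_{\lambda}^{\ps/p}\le\|\zeta_{\lambda}\|^{\ps/p-1}\zeta_{\lambda}$ and the equality hypothesis to obtain the dichotomy $\nu(E)\in\{0,\|\nu\|\}$ for every $G$-invariant Borel set $E$; a single orbit and its finiteness then follow from the $G$-invariance of $\nu$ and $\|\nu\|<\infty$, exactly as in your last step. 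Your first-paragraph point-wise estimate $\mathcal{B}^{G}_{\lambda,w}(\mathbb{R}^N)\,\nu_k^{p/\ps}\le\Gamma_k-\lambda\gamma_k$ is indeed not obtainable from $G$-invariant cutoffs (they cannot separate points of a single orbit), but you flag and replace it yourself, so only the orbit-level argument needs assessment.

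There is, however, one substantive missing link in that argument: the right-hand end of your chain is $\sum_j(P_j-\lambda Q_j)=\sum_j(\Gamma-\lambda\gamma)(O_j)$, while the hypothesis is an equality with $\|\zeta_\lambda\|$, and $\zeta_\lambda$ is by definition the restriction of $\Gamma-\lambda\gamma$ to the countable set $A_{\mathbb{K}}$. A priori one only knows $\sum_j(P_j-\lambda Q_j)\ge\|\zeta_\lambda\|$, which is the wrong direction: the chain $\|\zeta_\lambda\|=\mathcal{B}^{G}_{\lambda,w}(\mathbb{R}^N)\|\nu\|^{p/\ps}\le\mathcal{B}^{G}_{\lambda,w}(\mathbb{R}^N)\sum_j m_j^{p/\ps}\le\sum_j(P_j-\lambda Q_j)$ then forces no equality at all, so ``exactly one orbit carries all the mass'' does not yet follow. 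The repair uses an ingredient you state but deploy only later: $\nu$ is $G$-invariant and, by the preceding proposition, purely atomic with atom set $A_{\mathbb{K}}$; hence every point of $Gx_k$ is an atom of the same mass $\nu_k>0$, so each orbit $O_j$ is contained in $A_{\mathbb{K}}$ (and is automatically finite, since $\|\nu\|<\infty$). Consequently $(\Gamma-\lambda\gamma)(O_j)=\zeta_\lambda(O_j)$, and since $\lambda\gamma\le\Gamma$ gives $\zeta_\lambda\ge 0$ while the orbits are pairwise disjoint subsets of $A_{\mathbb{K}}$, one gets $\sum_j(P_j-\lambda Q_j)=\zeta_\lambda(A_{\mathbb{K}})=\|\zeta_\lambda\|$. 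With this identity inserted, the equality hypothesis does force equality throughout, strict subadditivity of $t\mapsto t^{p/\ps}$ leaves at most one $m_j\neq 0$, and your finiteness conclusion stands; so the gap is fillable, but as written the forcing step is not justified.
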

\begin{proof}
    Let $u=0$ and $\mathcal{B}^{G}_{\lambda,w}(\mathbb{R}^N) \|\nu\|^{\frac{p}{p_{s}^{*}}} = \|\zeta_{\lambda}\|$. Now the definition of $\mathcal{B}^{G}_{\lambda,w}(\mathbb{R}^N)$ yields
    \begin{align*}
        \mathcal{B}^{G}_{\lambda,w}(\mathbb{R}^N) & \bigg[ \int_{\mathbb{R}^{N}} |\phi u_n|^{p_{s}^{*}} \,\dx \bigg]^{\frac{p}{p_{s}^{*}}} \leq \iint_{\mathbb{R}^{2N}} \frac{|(\phi u_n)(x)-(\phi u_n)(y)|^p}{|x-y|^{N+sp}}\ \dxy - \lambda \int_{\mathbb{R}^{N}} w|\phi u_n|^p \,\dx, 
    \end{align*}
    for any $G$-invariant function $\phi \in C_{c}^{\infty}(\mathbb{R}^N)$. Taking the limit as $n \rightarrow \infty$ and using Corollary \ref{passing_limit}, we obtain
    \begin{align*} \mathcal{B}^{G}_{\lambda,w}(\mathbb{R}^N) &\lim\limits_{n \rightarrow \infty} \bigg[ \int_{\mathbb{R}^{N}} |\phi |^{p_{s}^{*}} \,\mathrm{d}\nu_n \bigg]^{\frac{p}{p_{s}^{*}}}  \leq \lim\limits_{n \rightarrow \infty} \iint_{\mathbb{R}^{2N}} |\phi(y)|^p \frac{|u_n(x)-u_n(y)|^p}{|x-y|^{N+sp}}\ \dxy - \lambda \lim\limits_{n \rightarrow \infty}\int_{\mathbb{R}^{N}} |\phi|^p \,\mathrm{d}\gamma_n. \end{align*} 
  Thus we have
  \begin{align*}
      \mathcal{B}^{G}_{\lambda,w}(\mathbb{R}^N) \lim\limits_{n \rightarrow \infty} \bigg[ \int_{\mathbb{R}^{N}} |\phi |^{p_{s}^{*}} \,\mathrm{d}\nu_n \bigg]^{\frac{p}{p_{s}^{*}}} \leq \lim\limits_{n \rightarrow \infty} \int_{\mathbb{R}^{N}} |\phi|^p \ \mathrm{d}\Gamma_{n} - \lambda \lim\limits_{n \rightarrow \infty}\int_{\mathbb{R}^{N}} |\phi|^p \,\mathrm{d}\gamma_n.
  \end{align*}
Since $u=0$, using weak star convergence of measures as mentioned in \eqref{weak star convergence of measures}, we obtain the inequality
\begin{align*}
    \mathcal{B}^{G}_{\lambda,w}(\mathbb{R}^N)  \bigg[ \int_{\mathbb{R}^{N}} |\phi |^{p_{s}^{*}} \,\mathrm{d}\nu \bigg]^{\frac{p}{p_{s}^{*}}} \leq  \int_{\mathbb{R}^{N}} |\phi|^p \ \mathrm{d}\Gamma - \lambda \int_{\mathbb{R}^{N}} |\phi|^p \,\mathrm{d}\gamma.
\end{align*}
As a result of the above inequality, we obtain
\begin{align*}
    \mathcal{B}^{G}_{\lambda,w}(\mathbb{R}^N)\nu^{\frac{p}{p_{s}^{*}}} \leq [\Gamma - \lambda \gamma].
\end{align*}
We know the measure $\nu$ is supported on the set $A_{\mathbb{K}}$. Therefore, it follows that
\begin{align*}
    \mathcal{B}^{G}_{\lambda,w}(\mathbb{R}^N)\nu^{\frac{p}{p_{s}^{*}}} \leq [\Gamma_{A_{\mathbb{K}}} - \lambda \gamma_{A_{\mathbb{K}}}],
\end{align*}
which implies
\begin{equation} \label{inequality with measure nu and zeta}
    \nu^{\frac{p}{p_{s}^{*}}} \leq [\mathcal{B}^{G}_{\lambda,w}(\mathbb{R}^N)]^{-1} \zeta_{\lambda}.
\end{equation}
From H\"{o}lder's inequality, we deduce 
\begin{equation*}
    \zeta_{\lambda}^{\frac{p_s^{*}}{p}} \leq \|\zeta_{\lambda}\|^{\frac{p_s^{*}}{p}-1}\zeta_{\lambda}.
\end{equation*}
Thus, combining the above inequality with \eqref{inequality with measure nu and zeta} we get the following
\begin{align*}
        \nu(E)  \leq \left[\mathcal{B}^{G}_{\lambda,w}(\mathbb{R}^N)\right]^{-\frac{p_s^{*}}{p}}\|\zeta_{\lambda}\|^{\frac{p_s^{*}}{p}-1}\zeta_{\lambda}(E),
\end{align*}
for all $G$-invariant $E \in \mathbb{B}(\R^N)$. Now, if the above inequality is strict, then $\mathcal{B}^{G}_{\lambda,w}(\mathbb{R}^N) \|\nu\|^{\frac{p}{p_{s}^{*}}} < \|\zeta_{\lambda}\|$, which gives us a contradiction to the fact that
\begin{align*}
    \mathcal{B}^{G}_{\lambda,w}(\mathbb{R}^N) \|\nu\|^{\frac{p}{p_{s}^{*}}} = \|\zeta_{\lambda}\|.
\end{align*}
Thus, we have 
\begin{equation} \label{ nu E}
    \nu(E)  = [\mathcal{B}^{G}_{\lambda,w}(\mathbb{R}^N)]^{-\frac{p_s^{*}}{p}}\|\zeta_{\lambda}\|^{\frac{p_s^{*}}{p}-1}\zeta_{\lambda}(E),
\end{equation}
for all $G$-invariant $E \in \mathbb{B}(\R^N)$. By \eqref{inequality with measure nu and zeta}, \eqref{ nu E} and $\mathcal{B}^{G}_{\lambda,w}(\mathbb{R}^N) \|\nu\|^{\frac{p}{p_{s}^{*}}} = \|\zeta_{\lambda}\|$, we deduce the following
\begin{align*}
    \nu(E)^{\frac{p}{p_{s}^{*}}} \leq [\mathcal{B}^{G}_{\lambda,w}(\mathbb{R}^N)]^{-1} \zeta_{\lambda}(E)
    = \nu(E) \  \|\nu\|^{-\frac{sp}{N}}.
\end{align*}
This further implies 
\begin{align*}
      \nu(E)^{\frac{1}{p_{s}^{*}}} \nu(\R^N)^{\frac{s}{N}}  \leq \nu(E)^{\frac{1}{p}}. 
\end{align*}
Thus, there arises two cases: either $\nu(E) = 0$ or $\nu(E)^{ \frac{1}{p} -\frac{1}{p_{s}^{*}}}   \geq  \nu(\R^N)^{\frac{s}{N}}$. Thus $\nu(E)$ is either $0$ or $\|\nu\|$. Hence, $\nu$ is concentrated on a single $G$-orbit. So, we can assume that $\nu$ is concentrated on the orbit $G \xi$ for some $\xi \in \R^N$. By using the inequality \eqref{inequality with measure nu and zeta} and Lemma \ref{Lions lemma}, there exists a countable set $A_{\mathbb{K}}=\{x_k \in \R^N \ : \ k \in \mathbb{K} \}$ and $\nu_k \in (0, \infty)$ such that $\nu = \sum_{k \in \mathbb{K}} \nu_k \delta_{x_k}$. Since $\nu$ is concentrated at $G\xi$, it follows that $A_{\mathbb{K}}=G\xi$. We note that the measure $\nu$ is invariant under any orthogonal transformations $g \in G$ (i.e., $\nu(g(E)) = \nu(E)$ for all $E \in \mathbb{B}(\R^N)$ and $g \in G$). It follows that $\nu_l = \nu_k $ for all $k,l \in \mathbb{K}$. Thus, $\mathbb{K}$ has to be finite (as $\|\nu\| < \infty$). Hence, the orbit $G\xi$ is also finite, which completes the proof.
\end{proof}
The following lemma gives the $(w, G)$-depended concentration compactness with assumption $|\overline{\sum_w}|=0$.
\begin{lemma} \label{l1}
      Let G be a closed subgroup of $\mathcal{O}(N)$, $w \in \H$ be a non-negative $G$-invariant Hardy potential, and $(u_n)$ be a sequence in $\mathcal{D}_G^{s,p}(\R^N)$ such that $u_n \rightharpoonup u$ in $\mathcal{D}_G^{s,p}(\R^N)$. If $|\overline{\sum_w}|=0$, then for $\lambda \in (0, \lambda_1(w))$, the following holds:
      \begin{itemize}
          \item[(i)] $\mathcal{C}_{\lambda,w}^{G,*}(\R^N) \|\nu\|^{\frac{p}{p_s^{*}}} + \lambda \|\gamma\| \leq \|\Gamma_{\overline{\sum_w} \cup A_{\mathbb{K}}}\|$,
          
          \item[(ii)] $\mathcal{C}_{\lambda,w}^{G}(\infty) \nu_\infty + \lambda \gamma_\infty \leq \Gamma_\infty$,
          
          \item[(iii)] $\overline{\lim\limits_{n \rightarrow \infty}} \int_{\R^N}|u_n|^{p_s^{*}} \ \dx = \int_{\R^N}|u|^{p_s^{*}} \ \dx + \|\nu\| + \nu_\infty$,
          
           \item[(iv)] $\overline{\lim\limits_{n \rightarrow \infty}} \int_{\R^N}w|u_n|^{p} \ \dx = \int_{\R^N}w|u|^{p} \ \dx + \|\gamma\| + \gamma_\infty$,

           \item[(v)] $\overline{\lim\limits_{n \rightarrow \infty}} \int_{\R^N}|D^s u_n|^{p} \ \dx \geq \int_{\R^N}|D^s u|^{p} \, \dx + \|\Gamma_{\overline{\sum_w} \cup A_{\mathbb{K}}}\| + \Gamma_\infty$,
            \item[(vi)] $
               \overline{\lim\limits_{n \rightarrow \infty}} \int_{\R^N}[|D^s u_n|^{p} - \lambda w|u_n|^{p}] \ \dx 
                \geq \int_{\R^N}[|D^s u|^{p} - \lambda w|u|^{p}]\ \dx  +  \mathcal{C}_{\lambda,w}^{G,*}(\R^N) \|\nu\|^{\frac{p}{p_s^{*}}} + \mathcal{C}_{\lambda,w}^{G}(\infty) \nu_\infty.$
      \end{itemize}
\end{lemma}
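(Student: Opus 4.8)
The plan is to prove the iso-concentration identities (iii)--(iv) and the lower bound (v) first, then deduce the concentration inequalities (i)--(ii) by testing the variational definition of $\mathcal{B}^{G}_{\lambda,w}$ against suitably localised competitors, and finally obtain (vi) by elementary manipulation of limsup's. After passing to a subsequence we may assume $u_n\to u$ a.e. (by the local compactness of $\Dsp$). For (iii), the Br\'ezis--Lieb Lemma \ref{BrezisLieb} applied on $(\R^N,\dx)$ to $(|u_n|)$ gives $\int_{\R^N}|u_n|^{p_s^*}\dx=\int_{\R^N}|u|^{p_s^*}\dx+\int_{\R^N}|u_n-u|^{p_s^*}\dx+o(1)$; one then splits $\int_{\R^N}|u_n-u|^{p_s^*}\dx=\int_{\R^N}(1-\Phi_R)\,\mathrm{d}\nu_n+\int_{\R^N}\Phi_R\,\mathrm{d}\nu_n$ with the cut-offs $\Phi_R$ of Proposition \ref{nu infinity prop}. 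Since $1-\Phi_R\in C_c(\R^N)$ and $\nu_n\wrastar\nu$, the first term tends to $\int_{\R^N}(1-\Phi_R)\,\mathrm{d}\nu$, which tends to $\|\nu\|$ as $R\to\infty$ by dominated convergence; the second term lies between $\int_{|x|\ge R+1}|u_n-u|^{p_s^*}\dx$ and $\int_{|x|\ge R}|u_n-u|^{p_s^*}\dx$ and hence contributes exactly $\nu_\infty$ in the iterated limit. Statement (iv) follows verbatim by applying Lemma \ref{BrezisLieb} on the positive measure space $(\R^N,w\,\dx)$ (note $w\ge 0$) to $(u_n)$ and splitting $\int_{\R^N}w|u_n-u|^p\dx$ via $\Phi_R$, using $\gamma_n\wrastar\gamma$ and the definition of $\gamma_\infty$.

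For (v) I would use the fractional Br\'ezis--Lieb identity $\|u_n\|_{s,p}^p=\|u\|_{s,p}^p+\|u_n-u\|_{s,p}^p+o(1)$, which follows from Lemma \ref{BrezisLieb} applied on $(\R^{2N},\dxy/|x-y|^{N+sp})$ to the functions $(x,y)\mapsto u_n(x)-u_n(y)$. Since $\int_{\R^N}|D^s v|^p\dx=\|v\|_{s,p}^p$, this gives $\limsup_n\int_{\R^N}|D^s u_n|^p\dx=\int_{\R^N}|D^s u|^p\dx+\limsup_n\Gamma_n(\R^N)$, so it suffices to prove the mass-splitting $\limsup_n\Gamma_n(\R^N)\ge\|\Gamma\|+\Gamma_\infty$. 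For $R_0>R_1$ the open ball $B_{R_1}$ and $\{|x|\ge R_0\}$ are disjoint, hence $\Gamma_n(\R^N)\ge\Gamma_n(B_{R_1})+\Gamma_n(\{|x|\ge R_0\})$; taking $\limsup_n$, using weak$^*$ lower semicontinuity on the open set ($\liminf_n\Gamma_n(B_{R_1})\ge\Gamma(B_{R_1})$) together with $\limsup_n\Gamma_n(\{|x|\ge R_0\})\ge\Gamma_\infty$ (the map $R\mapsto\limsup_n\Gamma_n(\{|x|\ge R\})$ being nonincreasing), and then letting $R_1\to\infty$, yields the claim. As $\|\Gamma\|\ge\|\Gamma_{\overline{\Sigma_w}\cup A_{\mathbb{K}}}\|$, (v) follows.

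For (i) and (ii) I would localise the definition of $\mathcal{B}^{G}_{\lambda,w}$. On $\overline{\Sigma_w}\setminus A_{\mathbb{K}}$ one has $\lambda\gamma\le\Gamma$ directly from \eqref{measureinequality1} and $\lambda<\lambda_1(w)$. Near an atom $x_k$, take a $G$-invariant cut-off $\phi_\varepsilon$ equal to $1$ on a small neighbourhood of the orbit $Gx_k$ and supported in a $G$-invariant $\varepsilon$-neighbourhood of it, and test $\mathcal{B}^{G}_{\lambda,w}(G(B_\varepsilon(x_k)))$ with $\phi_\varepsilon(u_n-u)$; since $\int_{\R^N}|\phi_\varepsilon(y)|^p|D^s(u_n-u)(y)|^p\dy=\int_{\R^N}|\phi_\varepsilon|^p\,\mathrm{d}\Gamma_n$, passing $n\to\infty$ via Corollary \ref{passing_limit} and \eqref{weak star convergence of measures}, then $\varepsilon\to0$ (using that $Gx_k$ is Lebesgue-null and $|\overline{\Sigma_w}|=0$, so the remainders are negligible), I obtain the orbit inequality $\mathcal{C}^{G}_{\lambda,w}(x_k)\,\nu(Gx_k)^{p/p_s^*}+\lambda\,\gamma(Gx_k)\le\Gamma(Gx_k)$. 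Summing over the distinct orbits making up $A_{\mathbb{K}}$, using the $G$-invariance of $\nu,\gamma,\Gamma$, the bound $\mathcal{C}^{G}_{\lambda,w}(x_k)\ge\mathcal{C}^{G,*}_{\lambda,w}(\R^N)$, the inequality $\sum_j t_j^{p/p_s^*}\ge(\sum_j t_j)^{p/p_s^*}$ (valid since $p/p_s^*<1$), and adding the estimate on $\overline{\Sigma_w}\setminus A_{\mathbb{K}}$ (and that $\gamma,\nu$ are carried by $\overline{\Sigma_w},A_{\mathbb{K}}$ respectively), gives (i). For (ii) the identical scheme is run with the radial (hence $G$-invariant) cut-offs $\Phi_R$ in place of $\phi_\varepsilon$: one tests $\mathcal{B}^{G}_{\lambda,w}(B_R^c)$ with $\Phi_R(u_n-u)$, passes $n\to\infty$, and sends $R\to\infty$, the surviving limits being exactly $\nu_\infty,\gamma_\infty,\Gamma_\infty$ by Proposition \ref{nu infinity prop} and Remark \ref{infestimate}.

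Finally, (vi) is arithmetic: with $a_n=\int_{\R^N}|D^s u_n|^p\dx$ and $c_n=\lambda\int_{\R^N}w|u_n|^p\dx$ one has $\limsup_n(a_n-c_n)\ge\limsup_n a_n-\limsup_n c_n$; inserting (v) for $\limsup_n a_n$, (iv) for $\limsup_n c_n$, and then (i) and (ii) to replace $\|\Gamma_{\overline{\Sigma_w}\cup A_{\mathbb{K}}}\|-\lambda\|\gamma\|$ by a lower bound $\mathcal{C}^{G,*}_{\lambda,w}(\R^N)\|\nu\|^{p/p_s^*}$ and $\Gamma_\infty-\lambda\gamma_\infty$ by $\mathcal{C}^{G}_{\lambda,w}(\infty)\nu_\infty$, gives the stated inequality. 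I expect step (i) to be the main obstacle: choosing $G$-invariant cut-offs adapted to a single orbit (which may be a point or a positive-dimensional compact submanifold), controlling the cross terms produced by Minkowski's inequality when passing from $\|\phi_\varepsilon(u_n-u)\|_{s,p}$ to $\iint_{\R^{2N}}|\phi_\varepsilon(y)|^p\frac{|(u_n-u)(x)-(u_n-u)(y)|^p}{|x-y|^{N+sp}}\dxy$, and organising the countable sum over orbits so that the surviving constant is exactly $\mathcal{C}^{G,*}_{\lambda,w}(\R^N)$; a secondary point is the justification of the fractional Br\'ezis--Lieb identity used in (v).
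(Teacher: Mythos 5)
Your proposal is correct, and for parts (i)--(iv) and (vi) it runs along essentially the same lines as the paper: the orbit-localised testing of $\mathcal{B}^{G}_{\lambda,w}(G(B_r(x_k)))$ with $(u_n-u)\phi$ via Corollary \ref{passing_limit}, the limit $r\to 0$ giving the orbit inequality, concavity of $t\mapsto t^{p/p_s^{*}}$ for the sum over orbits, the Hardy bound $\lambda\gamma\leq\Gamma$ on $\overline{\Sigma_w}\setminus A_{\mathbb{K}}$, the $\Phi_R$-based argument at infinity with Proposition \ref{nu infinity prop}, the Br\'ezis--Lieb splitting for (iii)--(iv), and the arithmetic combination for (vi) are all exactly the paper's steps. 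The genuine difference is in (v). The paper proceeds in three steps: it shows $\widetilde{\Gamma}=\Gamma$ on $A_{\mathbb{K}}$ and on $\overline{\Sigma_w}$ by estimating $\bigl||a|^p-|a-b|^p\bigr|$, then uses convexity (weak lower semicontinuity) to get $\widetilde{\Gamma}\geq |D^s u|^p\,\dx$, and finally exploits that $\overline{\Sigma_w}\cup A_{\mathbb{K}}$ is Lebesgue-null to add the two contributions by mutual singularity. You instead invoke the Br\'ezis--Lieb identity for the Gagliardo seminorm on $(\R^{2N},\,\dxy/|x-y|^{N+sp})$, reducing (v) to the mass-splitting $\overline{\lim}_n\,\Gamma_n(\R^N)\geq\|\Gamma\|+\Gamma_\infty$, which you obtain from $\Gamma_n(\R^N)\geq\Gamma_n(B_{R_1})+\Gamma_n(\{|x|\geq R_0\})$, weak$^*$ lower semicontinuity on open sets (justified via Proposition \ref{defmeasure}) and monotonicity of $R\mapsto\overline{\lim}_n\Gamma_n(\{|x|\geq R\})$. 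This is shorter, avoids the pointwise measure comparisons of Steps 1--2, does not use $|\overline{\Sigma_w}|=0$ at this stage, and in fact yields the stronger bound with $\|\Gamma\|$ in place of $\|\Gamma_{\overline{\Sigma_w}\cup A_{\mathbb{K}}}\|$ (which dominates it since the latter is a restriction); its only extra ingredient is the fractional Br\'ezis--Lieb identity, whose justification you correctly sketch (boundedness in $L^p(\R^{2N})$ of the difference quotients plus a.e.\ convergence after extracting a subsequence with $u_n\to u$ a.e.\ -- an extraction the paper's own use of Br\'ezis--Lieb in (iii)--(iv) implicitly requires as well).
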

\begin{proof}
    ${\rm(i)}$ By definition of $\mathcal{B}^{G}_{\lambda,w}(G(B_r(x)))$, we have 
    \begin{align*}
        \mathcal{B}^{G}_{\lambda,w}(G(B_r(x))) & \bigg[ \int_{\mathbb{R}^{N}} |v_n|^{p_{s}^{*}} \,\dx \bigg]^{\frac{p}{p_{s}^{*}}} \leq \iint_{\mathbb{R}^{2N}} \frac{|v_n(x)-v_n(y)|^p}{|x-y|^{N+sp}}\ \dxy - \lambda \int_{\mathbb{R}^{N}} w|v_n|^p \ \dx,
    \end{align*}
    where $v_n = (u_n -u) \phi, \ \phi \in C_c^{\infty}(G(B_r(x)))$ and $\phi$ is $G$-invariant.  Taking limit as $n \rightarrow \infty$ and using Corollary \ref{passing_limit} in the above inequality, we obtain 
\begin{align*}
    \mathcal{B}^{G}_{\lambda,w}(G(B_r(x)))  \bigg[ \int_{\mathbb{R}^{N}} |\phi|^{p_{s}^{*}} \ \mathrm{d}\nu \bigg]^{\frac{p}{p_{s}^{*}}} + \lambda \int_{\mathbb{R}^{N}} |\phi|^p \ \mathrm{d}\gamma \leq \int_{\mathbb{R}^{N}} |\phi|^p\ \mathrm{d}\Gamma .
\end{align*}
We choose $x_k$ with $k \in \mathbb{K}$ such that $\phi =1$ on $G(\{x_k\})$, then considering limit as $r \rightarrow 0$ in the above yields the inequality 
\begin{align*}
    \mathcal{C}_{\lambda,w}^{G}(x_k)  |\nu(Gx_k)|^{\frac{p}{p_{s}^{*}}} + \lambda \gamma(Gx_k) \leq \Gamma(Gx_k), \ \forall \ k \in \mathbb{K}.
\end{align*}
Taking the sum over $\mathbb{K}$ and using the concavity of the map $h(t):= t^{\frac{p}{p_{s}^{*}}}$, we get
  \begin{equation} \label{three one one}
      \mathcal{C}_{\lambda,w}^{G,*}(\R^N)  \|\nu\|^{\frac{p}{p_{s}^{*}}} + \lambda \|\gamma_{A_\mathbb{K}}\| \leq \|\Gamma_{A_\mathbb{K}}\|. 
  \end{equation}
  Further, we prove that $\lambda \|\gamma_{\overline{\sum_w} \setminus A_\mathbb{K}}\| \leq \|\Gamma_{\overline{\sum_w} \setminus A_\mathbb{K}}\|$. For $w \in \H$ and $\lambda \in (0, \lambda_1(w))$, we have
  \begin{align*}
      \lambda \int_{\mathbb{R}^{N}} w|(u_n - u) \phi|^p \ \dx  &\leq \frac{\lambda}{\lambda_1(w)} \iint_{\mathbb{R}^{2N}} \frac{|((u_n - u) \phi)(x)-((u_n - u) \phi)(y)|^p}{|x-y|^{N+sp}} \dxy \\
      & \leq \iint_{\mathbb{R}^{2N}} \frac{|((u_n - u) \phi)(x)-((u_n - u) \phi)(y)|^p}{|x-y|^{N+sp}} \dxy,
      \end{align*}
for any $\phi \in C_c^{\infty}(\R^N)$. By taking $n \rightarrow \infty$ and using Corollary \ref{passing_limit}, one can get
\begin{align*}
    \lambda \int_{\mathbb{R}^{N}} | \phi|^p \ \mathrm{d}\gamma \leq \int_{\mathbb{R}^{N}} | \phi|^p \ \mathrm{d}\Gamma\,, 
\end{align*}
for any $\phi \in C_c^{\infty}(\R^N)$. Thus, we have $\lambda \gamma(\overline{\sum_w} \setminus A_\mathbb{K}) \leq \Gamma(\overline{\sum_w} \setminus A_\mathbb{K})$. More precisely, we obtain
\begin{equation} \label{three one two}
    \lambda \|\gamma_{\overline{\sum_w} \setminus A_\mathbb{K}}\| \leq \|\Gamma_{\overline{\sum_w} \setminus A_\mathbb{K}}\|.
\end{equation}
Adding \eqref{three one one} and \eqref{three one two} and using the fact that the measures $\nu, \gamma$ are supported on $A_{\mathbb{K}}, \overline{\sum_w}$, we get
\begin{equation} \label{three one three}
    \mathcal{C}_{\lambda,w}^{G,*}(\R^N) \|\nu\|^{\frac{p}{p_s^{*}}} + \lambda \|\gamma\| \leq \|\Gamma_{\overline{\sum_w} \cup A_{\mathbb{K}}}\|.
\end{equation}
${\rm(ii)}$ Let $\Phi_R \in C_b^{1}(\R^N)^G,\ R>0,$ such that $0\leq \Phi_R \leq 1,\ \Phi_R = 0 \ \text{on} \ \overline{B_R} \ \text{and} \ \Phi_R =1 \ \text{on} \ B_{R+1}^c$. It is easy to see that $(u_n -u)\Phi_R \in \mathcal{D}_{G}^{s,p}(B_R^c)$. Thus, by definition of $\mathcal{B}^{G}_{\lambda,w}(G(B_R^c))$ and proceeding with the similar set of arguments as done in part $(i)$, we reach to the following inequality
\begin{align} \label{three one four}
    \begin{split}
        \mathcal{B}^{G}_{\lambda,w}(G(B_R^c)) & \lim\limits_{n \rightarrow \infty} \bigg[ \int_{\mathbb{R}^{N}} |\Phi_R|^{p_{s}^{*}} \ |u_n - u|^{p_{s}^{*}} \ \dx \bigg]^{\frac{p}{p_{s}^{*}}} + \lambda \lim\limits_{n \rightarrow \infty}\int_{\mathbb{R}^{N}} w|u_n -u|^p \ |\Phi_R|^p \ \dx \\
       & \leq \lim_{n\ra \infty} \iint_{\R^{2N} } |\Phi_R(y)|^p\frac{|(u_n-u)(x)-(u_n-u)(y)|^p}{|x-y|^{N+sp}} \dxy.
    \end{split}
\end{align}
The result follows immediately by taking the limit as $R \rightarrow \infty$ and using the Proposition \ref{nu infinity prop}. 

\noindent ${\rm(iii)}$ To prove this part, we break the integral into two parts as given below
\begin{align*}
    \overline{\lim\limits_{n \rightarrow \infty}} \int_{\R^N}|u_n|^{p_s^{*}} \ \dx = \overline{\lim\limits_{n \rightarrow \infty}} \bigg[\int_{\R^N}|u_n|^{p_s^{*}}(1-\Phi_R) \ \dx + \int_{\R^N}|u_n|^{p_s^{*}}\Phi_R \ \dx \bigg].
\end{align*}
Now taking the limit as $R \rightarrow \infty$ and using the Br\'{e}zis-Lieb lemma with Proposition \ref{nu infinity prop}-$(i)$ to obtain
\begin{align*}
    \overline{\lim\limits_{n \rightarrow \infty}} \int_{\R^N}|u_n|^{p_s^{*}} \ \dx &= \lim\limits_{R \rightarrow \infty} \overline{\lim\limits_{n \rightarrow \infty}} \bigg[\int_{\R^N}|u|^{p_s^{*}}(1-\Phi_R) \dx + \int_{\R^N}|u_n -u|^{p_s^{*}}(1-\Phi_R) \ \dx +\int_{\R^N}|u_n|^{p_s^{*}}\Phi_R \ \dx \bigg] \\
    &=  \int_{\R^N}|u|^{p_s^{*}} \ \dx + \|\nu\| + \nu_\infty.
\end{align*}
\noindent ${\rm(iv)}$ We use similar arguments as mentioned in part-$(iii)$ to prove this part.
\begin{align*}
    \overline{\lim\limits_{n \rightarrow \infty}} \int_{\R^N}w|u_n|^{p} \, \dx = \overline{\lim\limits_{n \rightarrow \infty}} \bigg[\int_{\R^N}w|u_n|^{p}(1-\Phi_R) \, \dx + \int_{\R^N}w|u_n|^{p}\Phi_R \, \dx \bigg].
\end{align*}
Now taking the limit as $R \rightarrow \infty$ and using the well-known Br\'{e}zis-Lieb lemma with Proposition \ref{nu infinity prop}-$(ii)$ to obtain
\begin{align*}
    \overline{\lim\limits_{n \rightarrow \infty}} \int_{\R^N}w|u_n|^{p} \, \dx &= \lim\limits_{R \rightarrow \infty} \overline{\lim\limits_{n \rightarrow \infty}} \bigg[\int_{\R^N}w|u|^{p}(1-\Phi_R) \, \dx  + \int_{\R^N}w|u_n-u|^{p}(1-\Phi_R) \, \dx + \int_{\R^N}w|u_n|^{p_s^{*}}\Phi_R \, \dx \bigg] \\
    &=  \int_{\R^N}w|u|^{p} \, \dx + \|\gamma\| + \gamma_\infty.
\end{align*} 
\noindent ${\rm(v)}$ We divide the proof of this part into several steps.\\
\textbf{Step 1.} We prove that $\widetilde{\Gamma}_{A_{\mathbb{K}}} = \Gamma_{A_{\mathbb{K}}}$. Let $\phi_\epsilon \in C_c^{\infty}(B_\epsilon(\omega))$ such that $0\leq \phi_\epsilon \leq 1, \ \phi_\epsilon(\omega) = 1$, where $\omega \in A_{\mathbb{K}}$. Then, we have
\begin{align*}
    &|\widetilde{\Gamma}(\phi_\epsilon ) - {\Gamma}(\phi_\epsilon)| \\&= \bigg|\int_{\R^N} \phi_\epsilon \ \mathrm{d}\widetilde{\Gamma} - \int_{\R^N} \phi_\epsilon \ \mathrm{d}{\Gamma} \bigg| \\
    &= \overline{\lim\limits_{n \rightarrow \infty}} \ \bigg|\int_{\R^N} \phi_\epsilon \ \mathrm{d}\widetilde{\Gamma}_n - \int_{\R^N} \phi_\epsilon \ \mathrm{d}{\Gamma}_n \bigg| \\
    &= \overline{\lim\limits_{n \rightarrow \infty}} \ \bigg|\iint_{\R^{2N} } \phi_\epsilon(x)\frac{|u_n(x)-u_n(y)|^p}{|x-y|^{N+sp}} \dxy  - \iint_{\R^{2N}} \phi_\epsilon(x)\frac{|(u_n-u)(x)-(u_n-u)(y)|^p}{|x-y|^{N+sp}} \dxy \bigg| \\
    &\leq  \overline{\lim\limits_{n \rightarrow \infty}} \ \iint_{\R^{2N} } \phi_\epsilon(x)\frac{\big| |u_n(x)-u_n(y)|^p -|(u_n-u)(x)-(u_n-u)(y)|^p \big|}{|x-y|^{N+sp}} \dxy \\
    & \leq \epsilon \overline{\lim\limits_{n \rightarrow \infty}}  \iint_{\R^{2N} } \phi_\epsilon(x) \frac{|u_n(x)-u_n(y)|^p}{|x-y|^{N+sp}} \dxy + C(\epsilon,p) \iint_{\R^{2N} } \phi_\epsilon(x) \frac{|u(x)-u(y)|^p}{|x-y|^{N+sp}} \dxy \\
    & \leq \epsilon \overline{\lim\limits_{n \rightarrow \infty}}  \int_{\R^{N} } \phi_\epsilon(x) |D^s u_n|^p \, \dx + C(\epsilon,p) \int_{\R^{N} } \phi_\epsilon(x) |D^s u|^p \, \dx \\
    & \leq \epsilon \overline{\lim\limits_{n \rightarrow \infty}}  \int_{\R^{N}} |D^s u_n|^p \, \dx + C(\epsilon,p) \int_{B_\epsilon(\omega)} \phi_\epsilon(x) |D^s u|^p \, \dx \\
     & \leq \epsilon \overline{\lim\limits_{n \rightarrow \infty}}  \int_{\R^{N}} |D^s u_n|^p \, \dx + C(\epsilon,p) \int_{B_\epsilon(\omega)} |D^s u|^p \, \dx. 
\end{align*}
Since $\{u_n\}$ is bounded in $\mathcal{D}_G^{s,p}(\R^N)$, the first integral in the above inequality is bounded as $n \rightarrow \infty$. Observe that both the terms in the above inequality tend to $0$ as $\epsilon \rightarrow 0$. This leads us to the result that $\widetilde{\Gamma}(\omega) = {\Gamma}(\omega)$, for $\omega \in A_{\mathbb{K}}$. Hence, we conclude that $\widetilde{\Gamma}_{A_{\mathbb{K}}} = \Gamma_{A_{\mathbb{K}}}$. \\
\textbf{Step 2.} $ \widetilde{\Gamma} = \Gamma$ on $\overline{\sum_w}$. Consider $A \subset \overline{\sum_w}$ is a Borel set. Then, for each $l \in \mathbb{N}$, there exists an open set $U_l$ containing $A$ such that $|U_l|=|U_l \setminus A| < \frac{1}{l}$. Then for a given $\epsilon>0$ and for each $\phi \in C_c^{\infty}(U_l)$ such that $0 \leq \phi \leq 1$, we have
\begin{align*}
    &\bigg|\int_{\R^N} \phi \ \mathrm{d}{\Gamma}_n- \int_{\R^N} \phi  \ \mathrm{d}\widetilde{\Gamma}_n  \bigg|\\ 
    &=  \ \bigg| \iint_{\R^{2N}} \phi(x)\frac{|(u_n-u)(x)-(u_n-u)(y)|^p}{|x-y|^{N+sp}} \dxy - \iint_{\R^{2N} } \phi(x)\frac{|u_n(x)-u_n(y)|^p}{|x-y|^{N+sp}} \dxy \bigg| \\
     &\leq  \ \iint_{\R^{2N} } \phi(x)\frac{\big| |u_n(x)-u_n(y)|^p -|(u_n-u)(x)-(u_n-u)(y)|^p \big|}{|x-y|^{N+sp}} \dxy \\
    & \leq \epsilon   \iint_{\R^{2N} } \phi(x) \frac{|u_n(x)-u_n(y)|^p}{|x-y|^{N+sp}} \dxy + C(\epsilon,p) \iint_{\R^{2N} } \phi(x) \frac{|u(x)-u(y)|^p}{|x-y|^{N+sp}} \dxy \\
    & \leq \epsilon  \int_{\R^{N} } \phi(x) |D^s u_n|^p \, \dx + C(\epsilon,p) \int_{\R^{N} } \phi(x) |D^s u|^p \, \dx \\
    & \leq \epsilon   \int_{\R^{N}} |D^s u_n|^p \, \dx + C(\epsilon,p) \int_{U_l} \phi(x) |D^s u|^p \, \dx \\
     & \leq \epsilon   \int_{\R^{N}} |D^s u_n|^p \, \dx + C(\epsilon,p) \int_{U_l} |D^s u|^p \, \dx. 
\end{align*}
Taking $n \rightarrow \infty$ and using the fact that the sequence $\{u_n\}$ is bounded in $\mathcal{D}_G^{s,p}(\R^N)$, we obtain
\begin{align*}
     \bigg|\int_{\R^N} \phi \ \mathrm{d}{\Gamma}- \int_{\R^N} \phi  \ \mathrm{d}\widetilde{\Gamma}  \bigg| \leq  \epsilon M + C(\epsilon,p) \int_{U_l} |D^s u|^p \, \dx,
\end{align*}
where $M$ is the upper bound of the sequence $\{u_n\}$. Now,
\begin{align*}
     \bigg|\Gamma(U_l)- \widetilde{\Gamma}(U_l)\bigg| &= \sup \bigg\{\bigg|\int_{\R^N} \phi \ \mathrm{d}{\Gamma}- \int_{\R^N} \phi  \ \mathrm{d}\widetilde{\Gamma}  \bigg| \ : \ \phi \in C_c^{\infty}(U_l), \ 0\leq \phi \leq 1  \bigg\} \\
     & \leq \epsilon M + C(\epsilon,p) \int_{U_l} |D^s u|^p \, \dx.
\end{align*}
Observe that $|U_l| \rightarrow 0$ as $l \rightarrow \infty$ and hence $|\Gamma(A)- \widetilde{\Gamma}(A)| \leq \epsilon M$. Now taking $\epsilon \rightarrow 0$, we get $$\Gamma(A)= \widetilde{\Gamma}(A).$$
\textbf{Step 3.} In this part, we prove that $\|\widetilde{\Gamma
}\| \geq \int_{\R^N} |D^s u|^p \, \dx + \|\Gamma_{\overline{\sum_w} \cup A_{\mathbb{K}}}\|$. Let $\phi \in C_c^{\infty}(\R^N)$ with $0\leq \phi \leq 1$. Notice that,
\begin{align*}\int_{\R^N} \phi \ \mathrm{d}\tilde{\Ga}= \lim_{n \ra \infty} \int_{\R^N} \phi \ \mathrm{d}\tilde{\Ga}_n  &= \lim_{n \ra \infty} \int_{\R^N} \phi |D^s u_n|^p \, \dx = \lim_{n \ra \infty} \int_{\R^N} F(x,D^s u_n(x)) \, \dx ,\end{align*}
where $F:\R^N \times \R \mapsto \R$ is defined as $F(x,z)=\phi(x)|z|^p.$ Clearly, $F$ is a Caratheodory function  and $F(x,.)$ is convex for almost every $x$. Hence, by 
Theorem 2.6 of \cite{Fillip} (page 28), we have
$\lim_{n \ra \infty} \int_{\R^N} \phi |D^s u_n|^p \, \dx \geq  \int_{\R^N} \phi |D^s u|^p \, \dx$. It turns out that $\widetilde{\Gamma} \geq |D^s u|^p \dx$. {Using the fact that the measure $|D^s u|^p \dx$ is singular to $\Gamma_{\overline{\sum_w} \cup A_{\mathbb{K}}}$ (as $\overline{\sum_w} \cup A_{\mathbb{K}}$ has Lebesgue measure zero), we have
\begin{equation}\label{ three one five}
    \|\widetilde{\Gamma}\| \geq \int_{\R^N} |D^s u|^p \, \dx + \|\Gamma_{\overline{\sum_w} \cup A_{\mathbb{K}}}\|.
\end{equation}}
Now, we will prove the main result of part-${\rm(v)}$. Let us observe that 
\begin{align*}
    \overline{\lim_{n \ra \infty}} \int_{\R^N} |D^s u_n|^p \, \dx= \overline{\lim_{n \ra \infty}} \int_{\R^N} |D^s u_n|^p (1-\Phi_R) \, \dx + \overline{\lim_{n \ra \infty}} \int_{\R^N} |D^s u_n|^p \Phi_R \, \dx .
\end{align*}
Using Proposition \ref{nu infinity prop}-$(iii)$ and \eqref{ three one five}, we find that
\begin{align*}
   \overline{\lim_{n \ra \infty}} \int_{\R^N} |D^s u_n|^p \, \dx =  \|\widetilde{\Gamma}\| + \Gamma_\infty \geq \int_{\R^N} |D^s u|^p \, \dx + \|\Gamma_{\overline{\sum_w} \cup A_{\mathbb{K}}}\| + \Gamma_\infty. 
\end{align*}
\noindent ${\rm(vi)}$ This part is immediate by combining ${\rm(v)},{\rm(iv)},{\rm(i)}$ and ${\rm(ii)}$.
\end{proof}
Further, we will prove our main results, and before that, we will prove the following proposition.
\begin{proposition} \label{P1}
Let $w \in \H$ satisfying $w^{-} \in \Hc$. Then for all $\lambda \in (0,\lambda_1(w))$ the following holds:
\begin{enumerate}
    \item[(a)] $\mathcal{C}^{*}_{\lambda,w}(\R^N) = \mathcal{C}^{*}_{\lambda,w^+}(\R^N)$,
    \item[(b)] $\mathcal{C}^{}_{\lambda,w}(\infty) = \mathcal{C}^{}_{\lambda,w^+}(\infty)$.
\end{enumerate}
\end{proposition}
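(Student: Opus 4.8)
The plan is to establish the pointwise identities $\mathcal{C}_{\lambda,w}(x) = \mathcal{C}_{\lambda,w^+}(x)$ for every $x\in\R^N$ and $\mathcal{C}_{\lambda,w}(\infty) = \mathcal{C}_{\lambda,w^+}(\infty)$; then (a) follows by taking the infimum over $x\in\R^N$, and (b) is one of these identities. (Here $w^+\in\H$ and $\lambda<\lambda_1(w)\le\lambda_1(w^+)$, both because $|w|\ge w^+$, so all quantities are well defined.) One direction is immediate: because $w = w^+-w^- \le w^+$ and $\lambda>0$, for every open $\Omega\subseteq\R^N$ and $u\in\mathcal{D}^{s,p}_0(\Omega)$ we have
\[
\mathcal{I}_{\lambda,w}(u) = \mathcal{I}_{\lambda,w^+}(u) + \lambda\int_\Omega w^-|u|^p\,\dx \ \ge\ \mathcal{I}_{\lambda,w^+}(u),
\]
so $\mathcal{B}_{\lambda,w}(\Omega)\ge\mathcal{B}_{\lambda,w^+}(\Omega)$; letting $\Omega=B_r(x)$ with $r\to0$ and $\Omega=B_R^c$ with $R\to\infty$ yields $\mathcal{C}_{\lambda,w}(x)\ge\mathcal{C}_{\lambda,w^+}(x)$ and $\mathcal{C}_{\lambda,w}(\infty)\ge\mathcal{C}_{\lambda,w^+}(\infty)$.

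For the reverse inequality I would first isolate the key fact — the one I expect to be the real obstacle — namely the \emph{localized vanishing} of $w^-$: writing $\lambda_1(w^-,\Omega):=\inf\{\|v\|_{s,p}^p : v\in\mathcal{D}^{s,p}_0(\Omega),\ \int_\Omega w^-|v|^p\,\dx=1\}$, I claim $\lambda_1(w^-,B_r(x))\to+\infty$ as $r\to0$ for each $x$, and $\lambda_1(w^-,B_R^c)\to+\infty$ as $R\to\infty$. The proof would go by contradiction: $\Omega\mapsto\lambda_1(w^-,\Omega)$ is monotone under inclusion, so if the (monotone) limit were finite we could pick near-minimizers $u_n$, bounded in $\Dsp$, with $\int_{\R^N} w^-|u_n|^p\,\dx=1$ and $\mathrm{supp}\,u_n\subseteq\overline{B_{1/n}(x)}$ (respectively $\subseteq\{|y|\ge n\}$). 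Passing to a subsequence, $u_n\rightharpoonup u$ in $\Dsp$; since the supports of $u_n$ shrink to $\{x\}$ (respectively leave every bounded set), $u_n\to0$ in $L^{\ps}_{\mathrm{loc}}$, whence $u=0$. But $w^-\in\Hc$, so by Theorem \ref{aio} the functional $v\mapsto\int_{\R^N} w^-|v|^p\,\dx$ is weak-to-strong continuous on $\Dsp$, and therefore $1=\int_{\R^N} w^-|u_n|^p\,\dx\to\int_{\R^N} w^-|u|^p\,\dx=0$, a contradiction. This is precisely where the hypothesis $w^-\in\Hc$ enters.

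To conclude, I would combine this with a uniform localization estimate. With $\mu:=1-\lambda/\lambda_1(w^+)\in(0,1]$ one has $\mathcal{I}_{\lambda,w^+}(v)\ge\mu\|v\|_{s,p}^p$ for all $v$, while $\mathcal{I}_{\lambda,w^+}(v)\le\|v\|_{s,p}^p$ since $w^+\ge0$; hence $\mathcal{B}_{\lambda,w^+}(\Omega)\le\inf\{\|v\|_{s,p}^p : v\in\mathcal{D}^{s,p}_0(\Omega),\ \|v\|_{\ps}=1\}=S$ for $\Omega=B_r(x)$ or $\Omega=B_R^c$, where $S$ is the fractional Sobolev constant (the last equality by translation- and dilation-invariance of the Sobolev quotient). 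Fix $x$; given $r>0$ and $\delta\in(0,1]$, choose $u\in\mathcal{D}^{s,p}_0(B_r(x))$ with $\|u\|_{\ps}=1$ and $\mathcal{I}_{\lambda,w^+}(u)<\mathcal{B}_{\lambda,w^+}(B_r(x))+\delta\le S+1$, so that $\|u\|_{s,p}^p<(S+1)/\mu=:C_0$, a bound independent of $x,r,\delta$. Since $\int_{B_r(x)} w^-|u|^p\,\dx\le\|u\|_{s,p}^p/\lambda_1(w^-,B_r(x))<C_0/\lambda_1(w^-,B_r(x))$ by definition of $\lambda_1(w^-,B_r(x))$, we get
\[
\mathcal{B}_{\lambda,w}(B_r(x)) \le \mathcal{I}_{\lambda,w}(u) = \mathcal{I}_{\lambda,w^+}(u) + \lambda\int_{B_r(x)} w^-|u|^p\,\dx < \mathcal{B}_{\lambda,w^+}(B_r(x)) + \delta + \frac{\lambda C_0}{\lambda_1(w^-,B_r(x))}.
\]
Sending $\delta\to0$ and then $r\to0$ and using the claim gives $\mathcal{C}_{\lambda,w}(x)\le\mathcal{C}_{\lambda,w^+}(x)$; the identical argument with $B_r(x)$ replaced by $B_R^c$ and $r\to0$ by $R\to\infty$ gives $\mathcal{C}_{\lambda,w}(\infty)\le\mathcal{C}_{\lambda,w^+}(\infty)$. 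Together with the easy direction this proves both identities, hence (a) and (b). The only routine items left out are the monotonicity of $\lambda_1(w^-,\cdot)$ under inclusion and the standard fact $\inf\{\|v\|_{s,p}^p : v\in\mathcal{D}^{s,p}_0(B_r(x)),\,\|v\|_{\ps}=1\}=S$ together with its $B_R^c$-analogue.
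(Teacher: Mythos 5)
Your proof is correct, and it rests on the same underlying mechanism as the paper's — the compactness of $u\mapsto\int_{\R^N}w^-|u|^p\,\dx$ for $w^-\in\Hc$ (Theorem \ref{aio}), applied to bounded sequences whose supports shrink to a point or escape to infinity and whose weak limit is therefore zero — but the packaging is genuinely different. The paper argues directly: it takes a diagonal near-minimizing sequence $u_n\in\mathcal{S}_p(B_{1/n}(x))$ for the $w^+$-functional, notes it is bounded (equivalence of the quasi-norm) and converges weakly to $0$, kills $\lambda\int_{\R^N} w^-|u_n|^p\,\dx$ by compactness, and concludes $\mathcal{C}_{\lambda,w^+}(x)\ge\mathcal{C}_{\lambda,w}(x)$, handling the reverse inequality (and the case at infinity) by the symmetric argument rather than by the simple pointwise comparison $w\le w^+$ you use for the easy direction. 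You instead factor the hard inequality through an auxiliary lemma — $\lambda_1(w^-,B_r(x))\to+\infty$ as $r\to0$ and $\lambda_1(w^-,B_R^c)\to+\infty$ as $R\to\infty$, itself proved by the same contradiction/compactness argument — combined with the uniform bound $\|u\|_{s,p}^p\le (S+1)/\mu$ on near-minimizers of $\mathcal{B}_{\lambda,w^+}$. This yields the quantitative estimate $\mathcal{B}_{\lambda,w}(\Omega)\le\mathcal{B}_{\lambda,w^+}(\Omega)+\lambda C_0/\lambda_1(w^-,\Omega)$, which is more modular (the localized vanishing of $w^-$ is isolated as a reusable fact with a rate), at the cost of invoking the domain-independence of the Sobolev constant, which the paper's direct argument never needs. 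One cosmetic point: in your contradiction argument the assertion that $u_n\to 0$ in $L^{\ps}_{\mathrm{loc}}$ need not hold (the $u_n$ may concentrate at $x$); what you actually need — and what is true — is only that the weak limit vanishes a.e. off the shrinking (resp.\ escaping) supports, hence equals $0$, which is exactly how the paper identifies the weak limit.
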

\begin{proof}
By the definition of $\mathcal{C}^{*}_{ \lambda,w}(\R^N)$, we fix $x \in \R^N$ and $u_n \in \Sbn$ such that
\begin{align*}
    \left( \|u_n\|_{s,p}^p -\lambda \int_{\R^N}w^+ |u_n|^p \dx \right) \rightarrow \mathcal{C}^{}_{\lambda,w^+}(x) \ \text{as} \ n \rightarrow \infty.
\end{align*}
It is easy to observe that the quasi-norm
\begin{align*}
    \|u\|_{s,p, \lambda}:= \left(\|u\|_{s,p}^p -\lambda \int_{\R^N}w^+ |u_n|^p \dx \right)^{1/p}
\end{align*}
 is equivalent to the norm $\|\cdot\|_{s,p}$ on $\Dsp$ for all $\lambda \in (0, \lambda_1(w))$. Thus the sequence $\{u_n\}$ is bounded in $\Dsp$ with support of $\{u_n\}$ converging to the singleton set $\{x\}$ as $n \rightarrow \infty$, which further implies that up to some subsequence $u_n \rightharpoonup 0$ in $\Dsp$. Also, we are known to fact that the map $W(u) = \int_{\R^N} w|u|^p \dx$, is compact for $w \in \Hc$. Using the assumption $w^- \in \Hc$, we have $\lim\limits_{n \rightarrow \infty}\int_{\R^N} w^-|u_n|^p \dx =0$ (see \cite{DKS}). Hence,
\begin{align*}
    \mathcal{C}^{}_{\lambda,w^+}(x) &= \lim\limits_{n \rightarrow \infty} \left( \|u_n\|_{s,p}^p -\lambda \int_{\R^N}w^+ |u_n|^p \dx \right) \\
    & = \lim\limits_{n \rightarrow \infty} \bigg( \|u_n\|_{s,p}^p -\lambda \int_{\R^N}w^+ |u_n|^p \dx  +\lambda \int_{\R^N}w^- |u_n|^p \dx \bigg)\\
    & = \lim\limits_{n \rightarrow \infty} \left( \|u_n\|_{s,p}^p -\lambda \int_{\R^N}w|u_n|^p \dx \right) \\
     & \geq  \mathcal{C}^{}_{\lambda,w}(x).
    \end{align*}
On the other hand, the reverse inequality holds trivially, i.e.,
\begin{align*}
    \mathcal{C}^{}_{\lambda,w}(x) &= \lim\limits_{n \rightarrow \infty} \left( \|u_n\|_{s,p}^p -\lambda \int_{\R^N}w |u_n|^p \dx \right) \\
    & = \lim\limits_{n \rightarrow \infty} \left( \|u_n\|_{s,p}^p -\lambda \int_{\R^N}w^+ |u_n|^p \dx +\lambda \int_{\R^N}w^- |u_n|^p \dx \right)\\
    & = \lim\limits_{n \rightarrow \infty} \left( \|u_n\|_{s,p}^p -\lambda \int_{\R^N}w^+|u_n|^p \dx \right) \\
     & \geq  \mathcal{C}^{}_{\lambda,w^+  }(x).
    \end{align*}
We retain the required result of part $(a)$ by combining the above two inequalities. Part $(b)$ follows similarly.
\end{proof}
Next, we prove the existence of a positive solution to the problem \eqref{Main problem}. Here we assume that $G=\{Id_{\R^N}\}$, i.e., the trivial subgroup of $\mathcal{O}(N)$ containing only the identity element, which is an identity matrix of order $N$. In this case, we observe that $\Dgsp = \Dsp$ and $\Sgn = \Sn$, and $\Bg = \Bn$. We use Lemma \ref{l1} with $G=\{Id_{\R^N}\}$ to prove the following theorem.
\begin{proof}[\textbf{Proof of Theorem \ref{T1}}]
Let $\{u_n\}$ be a minimizing sequence of $\mathcal{I}_{\lambda,w}$ on $\Sn$ that is, $\|u_n\|_{p_{s}^{*}} =1$ and $\mathcal{I}_{\lambda,w}(u_n) \rightarrow \Bn$ as $n \rightarrow \infty$. Since $w \in \H$, we infer from the fractional Hardy inequality that
$$\lambda_1(w) \int_{\R^N}w|u_n|^p \dx \leq \|u_n\|_{s,p}^p.$$ Furthermore, we have
\begin{align*}
    \mathcal{I}_{\lambda,w}(u_n) &= \Gagnp -\lambda \int_{\R^N}w|u_n|^p \dx \\
    & \geq \|u_n\|_{s,p}^p - \frac{\lambda}{\lambda_1(w)}\|u_n\|_{s,p}^p\\
    &= \bigg(1- \frac{\lambda}{\lambda_1(w)}\bigg)\|u_n\|_{s,p}^p.
\end{align*}
Since $\lambda \in (0, \lambda_1(w))$ and $\{u_n\}$ is a minimizing sequence, it is bounded in $\Dsp$. Thus, $\{u_n\}$ has a weakly convergent subsequence by reflexivity of $\Dsp$. We denote the subsequence by $\{u_n\}$ itself and let $\{u_n\}$ converges weakly to some $u \in \Dsp$. We need to show that the function $u$ is a solution to the problem \eqref{Main problem}. Using the (compactness result for $w^-$ by \cite{DKS}), Lemma \ref{l1} and Proposition \ref{P1}, we have 
\begin{align*}
    &\Bn = \lim_{n \rightarrow \infty} \mathcal{I}_{\lambda,w}(u_n) \\
    &=\lim_{n \rightarrow \infty} \left(\|u_n\|_{s,p}^p  -\lambda \int_{\R^N}w|u_n|^p \dx \right) \\
    &= \lim_{n \rightarrow \infty} \left( \|u_n\|_{s,p}^p -\lambda \int_{\R^N}w^+|u_n|^p \dx + \lambda \int_{\R^N}w^{-}|u_n|^p \dx\right) \\
    & \geq  { \|u\|_{s,p}^p -\lambda \int_{\R^N}w^+|u|^p \dx + \lambda \int_{\R^N}w^{-}|u|^p \dx}  + \mathcal{C}_{\lambda,w^+}^{*}(\R^N) \|\nu\|^{\frac{p}{p_s^{*}}} + \mathcal{C}_{\lambda,w^+}^{}(\infty) \nu_\infty^{\frac{p}{p_s^{*}}} \\
    & =  \|u_n\|_{s,p}^p - \lambda \int_{\R^N}w|u|^p \dx  + \mathcal{C}_{\lambda,w}^{*}(\R^N) \|\nu\|^{\frac{p}{p_s^{*}}} + \mathcal{C}_{\lambda,w}^{}(\infty) \nu_\infty^{\frac{p}{p_s^{*}}} \\
    & \geq \Bn \|u\|_{p_s^{*}}^p + \mathcal{C}_{\lambda,w}^{*}(\R^N) \|\nu\|^{\frac{p}{p_s^{*}}} + \mathcal{C}_{\lambda,w}^{}(\infty) \nu_\infty^{\frac{p}{p_s^{*}}}.
\end{align*}
If we just show that $\|u\|_{p_s^{*}}^p=1$ and $\|\nu\|=0=\nu_\infty$, then we are done. On contrary, suppose that $\|\nu\|$ is non-zero. Now, the assumption $w$ is subcritical in $\R^N$ yields 
\begin{align} \label{e1}
    \Bn> \Bn \big(\|u\|_{p_s^{*}}^p +\|\nu\|^{\frac{p}{p_s^{*}}}+ \nu_\infty^{\frac{p}{p_s^{*}}} \big).
\end{align}
Since $p_s^{*}>p$, the following inequality holds:
\begin{align} \label{e2}
\|u\|_{p_s^{*}}^p +\|\nu\|^{\frac{p}{p_s^{*}}}+ \nu_\infty^{\frac{p}{p_s^{*}}} \geq \big(  \|u\|_{p_s^{*}}^{p_s^{*}} + \|\nu\|+ \nu_\infty  \big)^{\frac{p}{p_s^{*}}} = \big(\lim_{n \rightarrow \infty}\|u_n\|_{p_s^{*}}^{p_s^{*}}\big)^{\frac{p}{p_s^{*}}}=1.  
\end{align}
We deduce from inequalities \eqref{e1} and \eqref{e2} that $\Bn> \Bn$, which is impossible. Similarly, using the same set of arguments, we obtain $\nu_\infty=0$. Hence, by \eqref{e2} we get $\|u\|_{p_s^{*}}=1$ and the constant $\Bn$ is attained at $u$. Thus, $v= [\Bn]^{\frac{1}{p_s^{*}-p}}u$ is a non-trivial solution to problem \eqref{Main problem}. If $u$ is a minimizer of $\Bn$, then $|u|$ is also a minimizer of $\Bn$. Observe that
\begin{align*}
    \Bn &= \inf_{u_1 \in \Sn} \left\{ \Gag1p - \lambda \int_{\R^N} w|u_1|^p \dx  \right\} \\
    & \leq \iint_{\R^N} \frac{\big||u(x)| -  |u(y)|\big|^p}{|x-y|^{N+sp}} \dx\dy - \lambda \int_{\R^N} w|u|^p \dx \\
    & \leq \iint_{\R^N} \frac{\big|u(x) -  u(y)\big|^p}{|x-y|^{N+sp}} \dx\dy - \lambda \int_{\R^N} w|u|^p \dx = \Bn.
\end{align*}
Thus, the equality must hold at each step, and consequently, $|u|$ is a minimizer of $\Bn$. Therefore, we conclude that there exists a non-negative non-trivial solution $v$ of \eqref{Main problem}. Further, in the distribution sense, we have
\begin{align*}
   (-\Delta_p)^s(v) + \lambda w^- v^{p-1} = v^{p_s^{*}-1} + \lambda w^+v^{p-1} \geq 0. 
\end{align*}
Thus by \cite[Theorem 1.2]{Quass2017}, we infer that $v$ is positive a.e. in $\R^N$, which completes the proof.
\end{proof} 
\begin{example}\label{example-1}
${\rm(i)}$ Let $w \equiv 0$ in $\R^N$ and $\lambda \in \R$. For a domain $\Omega$ in $\R^N$, we have
\begin{align*}
   \mathcal{B}^{}_{\lambda,0}(\Omega) = \inf\limits_{u \in \mathcal{D}^{s,p}_{0} (\Omega)} \left\{ \|u\|_{s,p}^p ~: ~ \|u\|_{p_s^{*}}=1  \right\}. 
\end{align*}
Note that $\mathcal{B}^{}_{\lambda,0}(\cdot)$ is independent of the domain (see \cite[Lemma 3.1]{BrascoJDE2018}) which implies $\mathcal{C}^{*}_{\lambda,0}(\R^N)=\mathcal{B}^{}_{\lambda,0}(\mathbb{R}^N)$ and $\mathcal{C}_{\lambda,0}(\infty)=\mathcal{B}^{}_{\lambda,0}(\mathbb{R}^N)$. Hence, $w \equiv 0$ is critical in $\R^N$ and at infinity as well.

\noi ${\rm(ii)}$ Now we consider $w = \frac{1}{|x|^{sp}},~ x \in \R^N$ and $\lambda \in (0, \lambda_1(w))$. Since $w \in \H$, we define
\begin{align*}
    \mathcal{B}^{}_{\lambda,\frac{1}{|x|^{sp}}}(\Omega) = \inf\limits_{u \in \mathcal{D}^{s,p}_{0} (\Omega)} \left\{ \|u\|_{s,p}^p -\lambda \int_{\Omega} \frac{|u|^p}{|x|^{sp}}\, \dx ~: ~ \|u\|_{p_s^{*}}=1  \right\},
\end{align*}
  where $\Omega$ is a domain in $\R^N$. For a given $\epsilon>0$, there exists a $v \in C_c^{\infty}(\R^N)$(by density of $C_c^{\infty}(\R^N)$ in $\Dsp$) with $\|v\|_{p_s^{*}}=1$ such that
  \begin{equation}\label{example1}
      \iint_{\R^N} \frac{\big|v(x) -  v(y)\big|^p}{|x-y|^{N+sp}} \dx\dy -\lambda \int_{\R^N} \frac{|v|^p}{|x|^{sp}} \, \dx < \mathcal{B}^{}_{\lambda,\frac{1}{|x|^{sp}}}(\R^N) + \epsilon.
  \end{equation}
  We define $v_R(x) = R^{\frac{N-sp}{N}}v(Rx)$ for some $R>0$. By taking $R$ large enough and up to translations, we obtain $v_R \in C_c^{\infty}(B_{r_1}(0))$, for some $r_1>0$. Similarly, by taking $R$ sufficiently small, we get $v_R \in C_c^{\infty}(B^c_{r_2}(0))$, for some $r_2>0$. Hence, it is not difficult to get $\mathcal{C}^{*}_{\lambda, \frac{1}{|x|^{sp}}}(\R^N) \leq \mathcal{B}^{}_{\lambda,\frac{1}{|x|^{sp}}}(\mathbb{R}^N)$ and
  $\mathcal{C}^{}_{\lambda, \frac{1}{|x|^{sp}}}(\infty) \leq \mathcal{B}^{}_{\lambda,\frac{1}{|x|^{sp}}}(\mathbb{R}^N)$ by following the proof of \cite[Lemma 3.1]{BrascoJDE2018}. Note that the reverse inequalities hold true by definition. Hence, we conclude that $w(x)=\frac{1}{|x|^{sp}}$ is critical in $\R^N$ and at infinity both.
\end{example} 
Let us now recall the following lemma from \cite[Proposition 3.1]{Brasco2016}.
\begin{lemma}
Let $s \in (0,1),\ p\in (1,\frac{N}{s})$ and let $S = \inf\limits_{\Dsp \setminus \{0\}} \frac{\|u\|_{s,p}^p}{\|u\|_{p_{s}^{*}}^p}$. Then
\begin{itemize}
\item[(i)]there exists a minimizer for S; 
\item[(ii)] for every minimizer $U \in \mathcal{D}^{s,p}(\mathbb{R}^{N})$, there exists $x_{0} \in \mathbb{R}^{N}$ and a constant sign monotone function $u: \mathbb{R}^{+} \rightarrow \mathbb{R}$ such that $U(x) = u(|x-x_{0}|)$;
\item[(iii)] for every minimizer U, there exists $\lambda_{U} > 0$ such that 
\begin{align*}
    \iint_{\mathbb{R}^{2N} } \dfrac{|U(x)-U(y)|^{p-2}(U(x)-U(y))(\phi(x)-\phi(y))}{|x-y|^{N+sp}} \dxy =\lambda_{U} \int_{\mathbb{R}^{N}} |U|^{p_{s}^{*}-2}U \phi \, \dx, \forall ~\phi \in \mathcal{D}^{s,p}(\mathbb{R}^{N}).
\end{align*}
\end{itemize}
\end{lemma}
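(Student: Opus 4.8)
The plan is to exploit the fact that the Rayleigh quotient $Q(u):=\|u\|_{s,p}^p/\|u\|_{\ps}^p$ is invariant under the three-parameter family of transformations $u\mapsto t\,u\big((\,\cdot\,-y)/\mu\big)$ with $t>0$, $\mu>0$, $y\in\R^N$, and to remove this invariance before passing to a limit. For part (i) I would take a minimizing sequence $\{u_n\}\subset\Dsp$ with $\|u_n\|_{\ps}=1$ and $\|u_n\|_{s,p}^p\to S$. Replacing each $u_n$ by its symmetric decreasing rearrangement $u_n^*$, the fractional P\'olya--Szeg\H{o} inequality gives $\|u_n^*\|_{s,p}\le\|u_n\|_{s,p}$ while $\|u_n^*\|_{\ps}=\|u_n\|_{\ps}$, so I may assume each $u_n$ is nonnegative, radial and radially nonincreasing; this kills the translation invariance. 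I would then use the dilation invariance to rescale so that $\int_{B_1}|u_n|^{\ps}\,\dx=\frac12$, which fixes the scale (and, for radial decreasing functions, forbids concentration of all the mass at the origin or escape to infinity). Applying Lions' concentration--compactness principle to the measures $|D^s u_n|^p\,\dx$ and $|u_n|^{\ps}\,\dx$ together with the strict concavity $t\mapsto t^{p/\ps}$, the two normalizations rule out both vanishing and dichotomy, so $u_n\to U$ strongly in $L^{\ps}(\R^N)$; then $\|U\|_{\ps}=1$, and weak lower semicontinuity of $\|\cdot\|_{s,p}$ on $\Dsp$ forces $\|U\|_{s,p}^p\le S$, i.e.\ $U$ is a minimizer.

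For part (ii), let $U$ be an arbitrary minimizer. First, $|U|\in\Dsp$ with $\||U|\|_{\ps}=\|U\|_{\ps}$ and $\bigl\|\,|U|\,\bigr\|_{s,p}\le\|U\|_{s,p}$ because $\bigl|\,|a|-|b|\,\bigr|\le|a-b|$; minimality forces equality a.e.\ in this pointwise inequality, which is possible only if $U(x)$ and $U(y)$ have the same sign for a.e.\ $(x,y)$, so $U$ has constant sign, say $U\ge0$. Next, $U^*$ is admissible with $\|U^*\|_{\ps}=\|U\|_{\ps}$ and $\|U^*\|_{s,p}\le\|U\|_{s,p}$ by the fractional P\'olya--Szeg\H{o} inequality; minimality again forces equality, and the rigidity (equality) case of that inequality yields that $U$ agrees, up to a translation by some $x_0\in\R^N$, with a radially symmetric, radially nonincreasing function. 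Writing $u(r)$ for the common value of $U^*$ on $\{|x|=r\}$ gives the representation $U(x)=u(|x-x_0|)$ with $u$ monotone and of constant sign.

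For part (iii), since $U$ minimizes the $C^1$ functional $Q$ on $\Dsp\setminus\{0\}$, computing $\frac{d}{dt}\big|_{t=0}Q(U+t\phi)=0$ for all $\phi\in\Dsp$ gives, after clearing the normalizing factors,
\begin{align*}
&\iint_{\R^{2N}}\frac{|U(x)-U(y)|^{p-2}(U(x)-U(y))(\phi(x)-\phi(y))}{|x-y|^{N+sp}}\,\dxy\\
&\qquad=\lambda_U\int_{\R^N}|U|^{\ps-2}U\phi\,\dx,
\end{align*}
with $\lambda_U:=\|U\|_{s,p}^p/\|U\|_{\ps}^{\ps}>0$ since $U\not\equiv0$; testing the identity with $\phi=U$ also identifies $\lambda_U=S\,\|U\|_{\ps}^{\,p-\ps}$.

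The main obstacle is part (i): the embedding $\Dsp\hookrightarrow L^{\ps}(\R^N)$ is not compact, so boundedness of the minimizing sequence alone gives nothing, and one must genuinely use both symmetries --- rearrangement to fix translations and a scale normalization to fix dilations --- before the concentration--compactness dichotomy can be excluded. The secondary difficulty is the rigidity analysis of the equality case in the fractional P\'olya--Szeg\H{o} inequality needed for the symmetry in (ii): for $p=2$ it is classical, while for general $p$ one appeals to the known characterization of equality in the fractional rearrangement inequality.
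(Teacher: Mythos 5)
This lemma is not proved in the paper at all: it is recalled verbatim from \cite[Proposition 3.1]{Brasco2016}, so there is no in-paper argument to compare with, and your outline follows essentially the same route as that reference --- existence via symmetrization and the limit-case concentration--compactness principle of Lions \cite{Lionslimitcase}, symmetry of every minimizer via the constant-sign argument plus the equality (rigidity) case of the fractional P\'olya--Szeg\H{o}/rearrangement inequality of Frank and Seiringer \cite{RS2008}, and the Euler--Lagrange equation with $\lambda_U=\|U\|_{s,p}^p/\|U\|_{\ps}^{\ps}$ obtained by differentiating the Rayleigh quotient. The one step you state a bit too quickly is the exclusion of concentration in part (i): the radial-decreasing reduction and the normalization $\int_{B_1}|u_n|^{\ps}\dx=\tfrac12$ rule out vanishing and full escape of mass, but a Dirac atom carrying part of the mass in the limit measure is only excluded by the second concentration--compactness lemma (the refined inequality $\Gamma\ge S\sum_k \nu_k^{p/\ps}\delta_{x_k}$) combined with the strict concavity of $t\mapsto t^{p/\ps}$, exactly as in the cited proof; with that ingredient made explicit, your sketch is correct.
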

In the following, we fix a radially symmetric non-negative decreasing minimizer $U = U(r)$ for $S$. Multiplying $U$ by a positive constant if necessary, we may assume
that $U$ is a radial solution of 
\begin{align*}
    (-\Delta)_{p}^{s}u = |U|^{p_{s}^{*}-2}U.
\end{align*}
Taking the test function $U$ and using the definition of $S$ yield
\begin{equation} \label{29}
\|U\|_{s,p}^{p} = \|U\|_{p_{s}^{*}}^{p_{s}^{*}} = S^{\frac{N}{sp}}.
\end{equation}
For $\epsilon > 0$, the function 
\begin{align*}
   U_{\epsilon}(|x|) = \epsilon^{- \frac{N-sp}{p}} U \bigg(\frac{|x|}{\epsilon}\bigg), 
\end{align*}
is also a minimizer for $S$ satisfying (\ref{29}).

The next lemma is due to \cite[Corollary 3.7]{Brasco2016} and \cite[Lemma 2.9]{Chen2018}.
\begin{lemma} \label{Brasco corollary 3.7}
There exist $C_{1},C_{2}> 0 $ and $\theta> 1$ such that for all $r \geq 1$,
\begin{align*}
    \dfrac{C_{1}}{r^{\frac{N-sp}{p-1}}} \leq U(r) \leq \dfrac{C_{2}}{r^{\frac{N-sp}{p-1}}} \ \text{ and } \ \dfrac{U(\theta r)}{U(r)} \leq \dfrac{1}{2}.
\end{align*}
\end{lemma}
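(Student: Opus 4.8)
The plan is to prove the sharp two-sided decay $C_1 r^{-\beta}\le U(r)\le C_2 r^{-\beta}$ for $r\ge 1$, where $\beta:=\frac{N-sp}{p-1}>0$, and then read off the ratio bound by dividing these two estimates. I would start by recalling what is already available about the fixed minimizer $U$: it is radial, non-negative, non-increasing, it solves $(-\Delta)_p^s U=U^{\ps-1}$ in $\R^N$, it lies in $\Dsp\subset L^{\ps}(\R^N)$, and by the strong maximum principle $U>0$ everywhere. On the compact annulus $\{1\le|x|\le R_0\}$ both inequalities are trivial (continuity plus positivity fix the constants), so the real work is on an exterior region $\{|x|>R_0\}$ with $R_0$ large.

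For the upper bound, I would first extract the crude decay $U(r)\le C r^{-(N-sp)/p}$ from monotonicity together with $U\in L^{\ps}(\R^N)$ (since $U(r)^{\ps}|B_r|\le\|U\|_{\ps}^{\ps}$), and then sharpen it to the rate $r^{-\beta}$ by comparing $U$ on $\{|x|>R_0\}$ with a power-type supersolution of the form $V(x)=c\min\{1,|x|^{-\beta}\}$. The mechanism is that, at the sharp decay rate, the right-hand side $U^{\ps-1}$ decays strictly faster than $|x|^{-N}$ (note $(\ps-1)\beta=N+\tfrac{sp}{p-1}>N$), so on the exterior region $U$ behaves, to leading order, like an $(s,p)$-harmonic function; choosing $c$ large so that $V$ dominates $U$ on $\partial B_{R_0}$ and at infinity, the weak comparison principle for $(-\Delta)_p^s$ gives $U\le V$, i.e. $U(r)\le c\,r^{-\beta}$. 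Making this rigorous — estimating $(-\Delta)_p^s$ of truncated powers and running the iteration that upgrades the crude exponent $(N-sp)/p$ to $\beta$ — is exactly \cite[Corollary 3.7]{Brasco2016}, and I expect it to be the main obstacle. For the lower bound I would use that $U$ is a positive, finite-energy $(s,p)$-superharmonic function: evaluating the defining principal-value integral at a point $x$ with $|x|$ large, the contribution of a fixed bounded region (where $U$ is bounded below by a positive constant while $U(x)$ is tiny) is negative of order $-|x|^{-(N+sp)}$, and the remaining positive part of the integral must absorb it; quantifying this prevents $U$ from decaying faster than $r^{-\beta}$ and yields $U(r)\ge C_1 r^{-\beta}$. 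Equivalently, one compares from below with a sub-barrier $c\min\{1,|x|^{-\beta}\}$ for small $c$; this is \cite[Lemma 2.9]{Chen2018}.

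Finally, once the two-sided bound is in hand the ratio estimate is immediate: for $r\ge 1$ one also has $\theta r\ge 1$, so
$$\frac{U(\theta r)}{U(r)}\le\frac{C_2(\theta r)^{-\beta}}{C_1 r^{-\beta}}=\frac{C_2}{C_1}\,\theta^{-\beta},$$
and since $\beta>0$ I would fix $\theta>1$ large enough that $\frac{C_2}{C_1}\theta^{-\beta}\le\frac12$. The only genuinely hard points are the two nonlocal comparison arguments that pin down the power-law decay; everything else is routine.
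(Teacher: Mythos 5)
Your proposal is correct and follows essentially the same route as the paper, which gives no independent proof but simply invokes \cite[Corollary 3.7]{Brasco2016} and \cite[Lemma 2.9]{Chen2018} for exactly the two-sided decay and ratio estimates you describe; your reduction of the ratio bound to the two-sided bound (choosing $\theta$ with $\frac{C_2}{C_1}\theta^{-\frac{N-sp}{p-1}}\le\frac12$) is the standard and correct way those references obtain it, and the hard barrier/comparison steps you defer are precisely the content of the cited results.
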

Now, we give some auxiliary functions and estimate their norms. In what follows,
$\theta$ is the constant in Lemma \ref{Brasco corollary 3.7} that depends only on $N, \ s \text{ and } p$. For $\epsilon  > 0, \rho >0$ and $\theta > 1$, we set
\begin{align*}
    m_{\epsilon, \rho} = \dfrac{U_{\epsilon}( \rho)}{U_{\epsilon}(\rho) - U_{\epsilon}(\rho \theta)}. 
\end{align*}
Moreover, we define 
\begin{align*}
    f_{\epsilon,\rho}(t) =\left\{ \begin{array}{rcl}
0,\hspace{2cm} & \mbox{if}
& 0 \leq t \leq U_{\epsilon}( \rho\theta), \\ m_{\epsilon,\rho}^{p}(t - U_{\epsilon}( \rho\theta)),& \mbox{if} & U_{\epsilon}(\rho \theta)< t \leq U_{\epsilon}(\rho)  \\
 t+ U_{\epsilon}(\rho)(m_{\epsilon,\rho}^{p-1}-1),~& \mbox{if} & t > U_{\epsilon}(\rho) \end{array}\right.
\end{align*}
 and
 \begin{align*}
     F_{\epsilon,\rho}(t) = \int_{0}^{t} f'_{\epsilon,\rho}(\tau)^{\frac{1}{p}} \mathrm{d}\tau = \left\{ \begin{array}{rcl}
0, \hspace{2.2cm}& \mbox{if}
& 0 \leq t \leq U_{\epsilon}(\rho \theta), \\ m_{\epsilon,\rho}(t - U_{\epsilon}(\rho \theta)), &\mbox{if} & U_{\epsilon}(\rho \theta)< t \leq U_{\epsilon}(\rho)  \\
 t, \hspace{2.2cm}&\mbox{if} & t > U_{\epsilon}(\rho) \end{array}\right.
 \end{align*}
The functions $f_{\epsilon,\rho}$ and $F_{\epsilon,\rho}$ are non-decreasing and absolutely continuous. Consider the radially symmetric non-increasing function
\begin{align*}
    u_{\epsilon,\rho}(r) = F_{\epsilon,\rho}(U_{\epsilon}(r)),
\end{align*}
which satisfies
\begin{align*}
    u_{\epsilon,\rho}(r) = \left\{ \begin{array}{rcl}
U_{\epsilon}(r), & \mbox{if}
& r \leq \rho, \\ 0,~~~~ & \mbox{if} & r \geq \rho \theta \end{array}\right.
\end{align*}
Then, we have the following estimates for $u_{\epsilon,\rho}$. We refer to \cite[Lemma 2.7]{Mosconi2016} for details of the proof.
\begin{lemma} \label{Mosconi lemma}
There exists a constant $C = C(N, p, s) > 0$ such that, for any $0< \epsilon \leq \frac{\rho}{2}$, it holds
\begin{itemize}
\item[(i)] $\|u_{\epsilon,\rho}\|_{s,p}^{p} \leq S^{\frac{N}{sp}} + C (\frac{\epsilon}{\rho})^{\frac{N-sp}{p-1}}$,
\item[(ii)] $\|u_{\epsilon,\rho}\|_{p}^{p} \geq \left\{ \begin{array}{rcl}
\frac{1}{C} \epsilon^{sp} \log(\frac{\rho}{\epsilon}), & \mbox{if}
& N = sp^{2}, \\ \frac{1}{C} \epsilon^{sp},\hspace{1cm} & \mbox{if} & N > sp^{2}, \end{array}\right.$ 
\item[(iii)] $\|u_{\epsilon,\rho}\|_{p^{*}_{s}}^{p^{*}_{s}} \geq S^{\frac{N}{sp}} - C (\frac{\epsilon}{\rho})^{\frac{N}{p-1}}.$
\end{itemize}
\end{lemma}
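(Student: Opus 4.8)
The plan is to normalise everything to the case $\epsilon=1$ and then feed in the sharp two-sided decay of $U$ from Lemma~\ref{Brasco corollary 3.7}. First I would record the scaling identity
\[
u_{\epsilon,\rho}(x) = \epsilon^{-\frac{N-sp}{p}}\, u_{1,\sigma}(x/\epsilon), \qquad \sigma := \rho/\epsilon \ge 2,
\]
which follows at once from the definitions of $m_{\epsilon,\rho}$, $f_{\epsilon,\rho}$, $F_{\epsilon,\rho}$ together with $U_\epsilon(r)=\epsilon^{-\frac{N-sp}{p}}U(r/\epsilon)$. Applying the change of variables $x\mapsto\epsilon x$ in the three integrals gives $\|u_{\epsilon,\rho}\|_{s,p}^p=\|u_{1,\sigma}\|_{s,p}^p$, $\|u_{\epsilon,\rho}\|_{\ps}^{\ps}=\|u_{1,\sigma}\|_{\ps}^{\ps}$ and $\|u_{\epsilon,\rho}\|_p^p=\epsilon^{sp}\|u_{1,\sigma}\|_p^p$, using $\frac{N-sp}{p}\cdot\ps=N$. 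Hence it is enough to prove, for every $\sigma\ge 2$ and some $C=C(N,p,s)$, that $\|u_{1,\sigma}\|_{s,p}^p\le S^{\frac{N}{sp}}+C\sigma^{-\frac{N-sp}{p-1}}$, that $\|u_{1,\sigma}\|_{\ps}^{\ps}\ge S^{\frac{N}{sp}}-C\sigma^{-\frac{N}{p-1}}$, and the stated lower bound for $\|u_{1,\sigma}\|_p^p$ (with $\log(\rho/\epsilon)=\log\sigma$). Throughout I would use that $u_{1,\sigma}\equiv U$ on $B_\sigma$, $u_{1,\sigma}\equiv 0$ off $B_{\sigma\theta}$, $u_{1,\sigma}\ge 0$, the bounds $C_1 r^{-\frac{N-sp}{p-1}}\le U(r)\le C_2 r^{-\frac{N-sp}{p-1}}$ for $r\ge 1$ and $U(\theta r)\le \tfrac12 U(r)$ from Lemma~\ref{Brasco corollary 3.7}, and the normalisation $\|U\|_{s,p}^p=\|U\|_{\ps}^{\ps}=S^{\frac{N}{sp}}$ with $(-\Delta)_p^s U=U^{\ps-1}$.

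The estimates (ii) and (iii) are the soft ones. For (iii), since $u_{1,\sigma}=U$ on $B_\sigma$ I would write $\|u_{1,\sigma}\|_{\ps}^{\ps}\ge\int_{B_\sigma}U^{\ps}\,\dx=S^{\frac{N}{sp}}-\int_{|x|\ge\sigma}U^{\ps}\,\dx$ by~\eqref{29}, and estimate $\int_{|x|\ge\sigma}U^{\ps}\,\dx\le C\int_\sigma^\infty r^{N-1-\frac{Np}{p-1}}\,\dr=C\sigma^{-\frac{N}{p-1}}$, the exponent $N-1-\frac{Np}{p-1}=-\frac{N}{p-1}-1$ being $<-1$. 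For (ii), $\|u_{1,\sigma}\|_p^p\ge\int_{B_\sigma}U^p\,\dx\ge C\int_1^\sigma r^{N-1-\frac{p(N-sp)}{p-1}}\,\dr$, and the exponent equals $\frac{sp^2-N}{p-1}-1$, which is $-1$ precisely when $N=sp^2$ (so the integral is $\gtrsim\log\sigma$) and is $<-1$ when $N>sp^2$ (so, since $\sigma\ge 2$, the integral is bounded below by a positive constant).

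The real work is (i). I would start from $u_{1,\sigma}=F_{1,\sigma}(U)$ and the pointwise inequality, valid for all $a,b\ge 0$ since $f_{1,\sigma}$ is nondecreasing, absolutely continuous and vanishes at $0$,
\[
|F_{1,\sigma}(a)-F_{1,\sigma}(b)|^p \le |a-b|^{p-2}(a-b)\bigl(f_{1,\sigma}(a)-f_{1,\sigma}(b)\bigr),
\]
which is just Hölder's inequality with exponents $\tfrac{p}{p-1}$ and $p$ applied to $F_{1,\sigma}(a)-F_{1,\sigma}(b)=\int_b^a f_{1,\sigma}'(\tau)^{1/p}\,\mathrm{d}\tau$. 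Inserting $a=U(x)$, $b=U(y)$ and integrating against $|x-y|^{-(N+sp)}$ gives
\[
\|u_{1,\sigma}\|_{s,p}^p \le \iint_{\R^{2N}} \frac{|U(x)-U(y)|^{p-2}(U(x)-U(y))\bigl(f_{1,\sigma}(U(x))-f_{1,\sigma}(U(y))\bigr)}{|x-y|^{N+sp}}\,\dxy .
\]
Since $f_{1,\sigma}$ is globally Lipschitz with $f_{1,\sigma}(0)=0$, one has $f_{1,\sigma}(U)\in\Dsp$, so the right-hand side is exactly the weak pairing $\langle(-\Delta)_p^s U, f_{1,\sigma}(U)\rangle$; using $(-\Delta)_p^s U=U^{\ps-1}$ this equals $\int_{\R^N}U^{\ps-1}f_{1,\sigma}(U)\,\dx$, and (i) reduces to bounding this by $S^{\frac{N}{sp}}+C\sigma^{-\frac{N-sp}{p-1}}$. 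I would split $\R^N=B_\sigma\cup(B_{\sigma\theta}\setminus B_\sigma)\cup B_{\sigma\theta}^c$: on $B_{\sigma\theta}^c$ the integrand vanishes; on $B_\sigma$, Lemma~\ref{Brasco corollary 3.7} gives $1\le m_{1,\sigma}\le 2$, hence $f_{1,\sigma}(U)=U+U(\sigma)(m_{1,\sigma}^{p-1}-1)\le U+CU(\sigma)$ and
\[
\int_{B_\sigma}U^{\ps-1}f_{1,\sigma}(U)\,\dx \le \int_{B_\sigma}U^{\ps}\,\dx + CU(\sigma)\int_{B_\sigma}U^{\ps-1}\,\dx \le S^{\frac{N}{sp}}+C\sigma^{-\frac{N-sp}{p-1}},
\]
using~\eqref{29}, $U(\sigma)\le C\sigma^{-\frac{N-sp}{p-1}}$ and the fact that $\int_{B_\sigma}U^{\ps-1}\,\dx$ is bounded uniformly in $\sigma$ (its radial integrand decays like $r^{-\frac{sp}{p-1}-1}$); on the annulus $f_{1,\sigma}(U)=m_{1,\sigma}^p(U-U(\sigma\theta))\le 2^p U$, so that piece is $\le 2^p\int_{|x|\ge\sigma}U^{\ps}\,\dx\le C\sigma^{-\frac{N}{p-1}}\le C\sigma^{-\frac{N-sp}{p-1}}$. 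Summing the three contributions yields (i).

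The main obstacle is precisely this last step: in the nonlocal setting one cannot differentiate $F_{1,\sigma}(U)$ and integrate by parts, so everything hinges on the pointwise convexity/Hölder inequality above, on recognising the resulting double integral as $\langle(-\Delta)_p^s U, f_{1,\sigma}(U)\rangle$ — which forces one to verify $f_{1,\sigma}(U)\in\Dsp$ so that it is an admissible test function in the Euler--Lagrange equation for $U$ — and then on the careful bookkeeping of the decay exponents from Lemma~\ref{Brasco corollary 3.7} in each of the three regions $B_\sigma$, $B_{\sigma\theta}\setminus B_\sigma$, $B_{\sigma\theta}^c$. Once the truncation estimate (i) is in place, parts (ii) and (iii) are routine decay integrals.
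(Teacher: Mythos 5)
Your proposal is correct: the rescaling to $\epsilon=1$, the pointwise H\"older inequality $|F(a)-F(b)|^p\le|a-b|^{p-2}(a-b)(f(a)-f(b))$, testing the equation $(-\Delta)_p^s U=U^{\ps-1}$ with the admissible function $f_{1,\sigma}(U)\in\Dsp$, and the decay bookkeeping via Lemma \ref{Brasco corollary 3.7} all go through. The paper itself gives no proof here --- it simply cites \cite[Lemma 2.7]{Mosconi2016} --- and your argument is essentially the one in that reference, so there is nothing to compare beyond noting that you have faithfully reconstructed the cited proof.
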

Now we define the function 
\begin{align*}
   v_{\epsilon,\rho}(x) = \frac{u_{\epsilon,\rho}(x)}{\|u_{\epsilon,\rho}\|_{p_s^{*}}}. 
\end{align*}
In the following, we take $w \geq w_0>0$ on the ball $B_{\rho \theta}(0)$.
\begin{align*}
\int_{\mathbb{R}^{N}} w|u_{\epsilon,\rho}(|x|)|^{p}\, \dx &\geq w_0\int_{B_{\rho}(0)} |u_{\epsilon,\rho}(|x|)|^{p}\, \dx \\
&= w_0 \int_{B_{\rho}(0)} |U_{\epsilon}(|x|)|^{p}\, \dx \geq C_{1}^{p} \epsilon^{sp} w_0 \int_{1}^{\frac{\rho}{\epsilon}} r^{-\frac{N-sp}{p-1} p + N-1}\, \mathrm{d}r.
\end{align*}
Then
\begin{align*}
    \int_{\mathbb{R}^{N}} w|u_{\epsilon,\rho}(x)|^{p}\, \dx \geq  c_{s,p}\left\{ \begin{array}{rcl}
w_0 \epsilon^{sp},\hspace{1cm}& \mbox{if}
& sp^2 < N, \\ 
w_0 \epsilon^{sp}|\log(\frac{\rho}{\epsilon})|, & \mbox{if} & sp^2 = N. \end{array}\right.
\end{align*}
and
\begin{align} \label{implication}
    \int_{\mathbb{R}^{N}} w|v_{\epsilon,\rho}(x)|^{p}\, \dx \geq  C_{s,p}\left\{ \begin{array}{rcl}
w_{0}\epsilon^{sp},\hspace{1cm}& \mbox{if} & sp^{2}<N, \\ 
w_{0}\epsilon^{sp}|\log(\frac{\rho}{\epsilon})|, & \mbox{if} & sp^{2}=N. \end{array}\right.
\end{align}
While proving above estimates we take the help of the Sobolev embedding $\Dsp \hookrightarrow L^{p_{s}^{*}}(\R^N)$.
Next, we use the estimates mentioned above to prove the following theorem.
\begin{proof}[Proof of Theorem \ref{T}]
    It is given that $w \in \Hc$, therefore using the compactness criteria of \cite{DKS}, the map $W(\varphi) = \int_{\R^N} w|\varphi|^p\, \dx$ is compact in $\Dsp$. For a fix $\lambda \in (0, \lambda_{1}(w))$ and for any given $\epsilon>0$, by definition of $\C_{\lambda,w}(x)$, there exists a $\delta>0$ such that for each $\tau \in (0,\delta)$, there exists $u_\tau \in \D^{s,p}(B_{\tau}(x))$ with $\|u_\tau\|_{\ps}=1$ and satisfying
    \begin{align} \label{Nov9}
        \iint_{\R^N \times \R^N} \frac{|u_\tau(x) - u_\tau(y)|^p}{|x-y|^{N+sp}} \, \dxy - \lambda \int_{\R^N} w |u_\tau|^p\, \dx < \C_{\lambda,w}(x) + \epsilon.
    \end{align}
This further implies,
\begin{align*}
 \B0n - \lambda \int_{\R^N} w |u_\tau|^p\, \dx < \C_{\lambda,w}(x) + \epsilon.   
\end{align*}
Since $\lambda \in (0, \lambda_{1}(w))$, the inequality \eqref{Nov9} indicates that the sequence $\{u_\tau\}$ is bounded in $\Dsp$. By the reflexivity of $\Dsp$, $\{u_\tau \}$ has a weakly convergent subsequence. Notice that the supports of $\{u_\tau\}$ are decreasing to a singleton set $\{x\}$ as $\tau \ra 0$. Therefore, $\{u_\tau\}$ weakly converges to $0$ in $\Dsp$ as $\tau \ra 0$. Thus, $\int_{\R^N} w |u_\tau|^p\, \dx \ra 0$ as $\tau \ra 0$, by compactness of the map $W$ (see \cite{DKS}). Hence, we get $\B0n \leq \C_{\lambda,w}(x)$, for any $x \in \R^N$ and further we have $\B0n \leq \C^{*}_{\lambda,w}(\R^N)$. The use of similar arguments yield $\B0n \leq \C^{}_{\lambda,w}(\infty)$. Now to prove that $w$ is subcritical in $\R^N$ and at infinity, it is enough to claim that $\Bn < \B0n$. We use Lemma \ref{Mosconi lemma} and its implication \eqref{implication} to prove our claim. We have the following estimate:
\begin{align*}
    \mathcal{A}_{\lambda,w}(u_{\epsilon,\rho}) &:= \frac{ \iint_{\R^N \times \R^N} \frac{|u_{\epsilon,\rho}(x) - u_{\epsilon,\rho}(y)|^p}{|x-y|^{N+sp}} \, \dxy - \lambda \int_{\R^N} w |u_{\epsilon,\rho}(x)|^p\, \dx}{[\int_{\R^N}  |u_{\epsilon,\rho}|^{\ps}]^{\frac{p}{\ps}}} \\
    &= \frac{\iint_{\R^N \times \R^N} \frac{|u_{\epsilon,\rho}(x) - u_{\epsilon,\rho}(y)|^p}{|x-y|^{N+sp}} \, \dxy }{[\int_{\R^N}  |u_{\epsilon,\rho}(x)|^{\ps}]^{\frac{p}{\ps}}} -\lambda \frac{ \int_{\R^N} w |u_{\epsilon,\rho}(x)|^p\, \dx}{[\int_{\R^N}  |u_{\epsilon,\rho}(x)|^{\ps}]^{\frac{p}{\ps}}}\\
    & = \iint_{\R^N \times \R^N} \frac{|v_{\epsilon,\rho}(x) - v_{\epsilon,\rho}(y)|^p}{|x-y|^{N+sp}} \, \dxy -\lambda  \int_{\R^N} w |v_{\epsilon,\rho}(x)|^p\, \dx \\
    & \leq \begin{cases}
        S + O(\frac{\epsilon}{\rho})^{\frac{N-sp}{p-1}}- C_{s,p} w_0 \epsilon^{sp}, & \text{if} \ sp^2 < N  \\
        S + O(\frac{\epsilon}{\rho})^{sp} - C_{s,p} w_0 \epsilon^{sp} |\log(\frac{\rho}{\epsilon})|, & \text{if} \ sp^2 = N,
    \end{cases}
\end{align*}
The last inequality is due to part (i) and part (iii) of Lemma \ref{Mosconi lemma} and the implication\eqref{implication}. Thus for $\epsilon$ small enough, we have $\mathcal{A}_{\lambda,w}(u_{\epsilon,\rho}) < \B0n$. Furthermore, we have
\begin{align*}
  \Bn \leq \|v_{\epsilon,\rho}\|_{s,p}^p -\lambda  \int_{\R^N} w |v_{\epsilon,\rho}(x)|^p\, \dx = \mathcal{A}_{\lambda,w}(u_{\epsilon,\rho}) < \B0n.  
\end{align*}
Hence, the claim follows.
\end{proof}
Further, we prove the existence of a positive solution to \eqref{Main problem} when $w$ is a $G$-invariant Hardy potential by using the principle of symmetric criticality. First, we deal with the case of $G$-sub-criticality in $\R^N$ and at $\infty$ of the Hardy potential $w$, and thereafter we will deal with the case of $G$-critical potential also. 
\begin{proof}[Proof of Theorem \ref{T2}]
Let $\{u_n\}$ be a minimizing sequence of $\mathcal{I}_{\lambda,w}$ on $\Sgn$ i.e., $\|u_n\|_{p_{s}^{*}} =1$ and $\mathcal{I}_{\lambda,w}(u_n) \rightarrow \Bg$ as $n \rightarrow \infty$. Since $w \in \H$, we recall from the fractional Hardy inequality that
\begin{align*}
    \lambda_1(w) \int_{\R^N}w|u_n|^p \dx \leq \|u_n\|_{s,p}^p, \ \forall u_n \in \Dgsp,
\end{align*}
and further we have
\begin{align*}
    \mathcal{I}_{\lambda,w}(u_n) &= \Gagnp -\lambda \int_{\R^N}w|u_n|^p \dx \\
    & \geq \|u_n\|_{s,p}^p - \frac{\lambda}{\lambda_1(w)}\|u_n\|_{s,p}^p= \bigg(1- \frac{\lambda}{\lambda_1(w)}\bigg)\|u_n\|_{s,p}^p.
\end{align*}
Since $\lambda \in (0, \lambda_1(w))$ and $\{u_n\}$ is a minimizing sequence, it is bounded in $\Dgsp$. Thus $\{u_n\}$ has a weakly convergent subsequence by the reflexivity of $\Dgsp$. We denote the subsequence by $\{u_n\}$ itself and let $\{u_n\}$ converges weakly to some $u \in \Dgsp$. Now we need to show that the function $u$ is indeed a solution to the problem \eqref{Main problem}. Using the (compactness result for $w^-$), Lemma \ref{l1} and Proposition \ref{P1}, we have the following: 
\begin{align*} 
     &\Bg = \lim_{n \rightarrow \infty} \mathcal{I}_{\lambda,w}(u_n)\\
     &=\lim_{n \rightarrow \infty} \left(\|u_n\|_{s,p}^p -\lambda \int_{\R^N}w|u_n|^p \dx \right) \\
    &= \lim_{n \rightarrow \infty} \left( \|u_n\|_{s,p}^p -\lambda \int_{\R^N}w^+|u_n|^p \dx + \lambda \int_{\R^N}w^{-}|u_n|^p \dx\right) \\
    & \geq  \|u\|_{s,p}^p -\lambda \int_{\R^N}w^+|u|^p  + \lambda \int_{\R^N}w^{-}|u|^p \dx \dx   + \mathcal{C}_{\lambda,w^+}^{G,*}(\R^N) \|\nu\|^{\frac{p}{p_s^{*}}} + \mathcal{C}_{\lambda,w^+}^{G}(\infty) \nu_\infty^{\frac{p}{p_s^{*}}}\\
      &=  \|u\|_{s,p}^p - \lambda \int_{\R^N}w|u|^p \dx  + \mathcal{C}_{\lambda,w}^{G,*}(\R^N) \|\nu\|^{\frac{p}{p_s^{*}}} + \mathcal{C}_{\lambda,w}^{G}(\infty) \nu_\infty^{\frac{p}{p_s^{*}}} \\
    & \geq \Bg \|u\|_{p_s^{*}}^p + \mathcal{C}_{\lambda,w}^{G,*}(\R^N) \|\nu\|^{\frac{p}{p_s^{*}}} + \mathcal{C}_{\lambda,w}^{G}(\infty) \nu_\infty^{\frac{p}{p_s^{*}}}.
\end{align*}
If we just show that $\|u\|_{p_s^{*}}^p=1$ and $\|\nu\|=0=\nu_\infty$, then we are done. On contrary suppose that $\|\nu\|$ is non-zero. Then using the assumption that $w$ is $G$-subcritical in $\R^N$ we get 
\begin{align} \label{e3}
    \Bg> \Bg \big(\|u\|_{p_s^{*}}^p +\|\nu\|^{\frac{p}{p_s^{*}}}+ \nu_\infty^{\frac{p}{p_s^{*}}} \big).
\end{align}
Since $p_s^{*}>p$, the following inequality holds:
\begin{align} \label{e4}
\|u\|_{p_s^{*}}^p +\|\nu\|^{\frac{p}{p_s^{*}}}+ \nu_\infty^{\frac{p}{p_s^{*}}} \geq \big(  \|u\|_{p_s^{*}}^{p_s^{*}} + \|\nu\|+ \nu_\infty  \big)^{\frac{p}{p_s^{*}}} = \big(\lim_{n \rightarrow \infty}\|u_n\|_{p_s^{*}}^{p_s^{*}}\big)^{\frac{p}{p_s^{*}}}=1.  
\end{align}
We deduce from inequalities \eqref{e3} and \eqref{e4} that $\Bg> \Bg$, which is impossible. Similarly, using a similar set of arguments we obtain $\nu_\infty=0$. Hence, by \eqref{e4} we get {$\|u\|_{p_s^{*}}=1$} and the constant $\Bg$ is attained at $u \in \Dgsp$ which further implies that $v= [\Bg]^{\frac{1}{p_s^{*}-p}}u$ is a critical point to the functional $\mathcal{I}_{\lambda,w}|_{\Dgsp}$ i.e., the functional $\mathcal{I}_{\lambda,w}$ restricted on $\Dgsp$. Since $\Dsp$ is a reflexive and strictly convex Banach space for $p>1$ and the subgroup $G$ of $\mathcal{O}(N)$ is acting isometrically on $\Dsp$, and the functional $\mathcal{I}_{\lambda,w}$ is a $G$-invariant $C^1$. The principle of symmetric criticality theorem (see \cite[Theorem
2.2]{Kobayashi2004}) asserts that $u \in \Dgsp$ is also a critical point of the functional $\mathcal{I}_{\lambda,w}$. Thus, $v$ is a non-trivial solution to the problem \eqref{Main problem}. First we prove that if $u$ is a minimizer of $\Bg$, then $|u|$ is also a minimizer of $\Bg$. Observe that
\begin{align*}
    \Bg &= \inf_{\varphi \in \Sgn} \bigg\{ \iint_{\R^{2N}} \frac{|\varphi(x)-\varphi(y)|^p}{|x-y|^{N+sp}} \, \dxy- \lambda \int_{\R^N} w|\varphi|^p \dx  \bigg\} \\
    & \leq \iint_{\R^N} \frac{\big||u(x)| -  |u(y)|\big|^p}{|x-y|^{N+sp}} \dx\dy - \lambda \int_{\R^N} w|u|^p \dx \\
    & \leq \iint_{\R^N} \frac{\big|u(x) -  u(y)\big|^p}{|x-y|^{N+sp}} \dx\dy - \lambda \int_{\R^N} w|u|^p \dx = \Bg.
\end{align*}
Thus, the equality holds at each step and consequently $|u|$ is also a minimizer of $\Bg$. Thus, we can assume $u$ as a non-negative minimizer of $\Bg$ and hence $v$ is a non-negative solution to \eqref{Main problem}. Further, in the distribution sense, we have
\begin{align*}
   (-\Delta_p)^s(v) + \lambda w^- v^{p-1} = v^{p_s^{*}-1} + \lambda w^+v^{p-1} \geq 0. 
\end{align*}
Thus by \cite[Theorem 1.2]{Quass2017}, we infer that $v$ is positive a.e. in $\R^N$, which completes the proof.
\end{proof}
In the following theorem, we discuss the case of the Hardy potential $w$, which is $G$-critical in $\R^N$. 
\begin{proof}[Proof of Theorem \ref{T3}]
Let $\{u_n\}$ be a minimizing sequence of the functional $\mathcal{I}_{\lambda,w}$ on $\Sgn$ i.e., $\|u_n\|_{p_{s}^{*}} =1$ and $\mathcal{I}_{\lambda,w}(u_n) \rightarrow \Bg$ as $n \rightarrow \infty$. Since $w \in \H$ and $\lambda \in (0, \lambda_1(w))$, the sequence $\{u_n\}$ is bounded in $\Dgsp$. By reflexivity of $\Dgsp$, we assume that $\{u_n\}$ converges weakly to some $u$ in $\Dgsp$ up to a subsequence. Using Theorem \ref{aio}, Lemma \ref{l1} and Proposition \ref{P1}, we have the following:
\begin{align} \label{e5}
     \Bg &= \lim_{n \rightarrow \infty} \left( \|u_n\|_{s,p}^p  -\lambda \int_{\R^N}w^+|u_n|^p \dx + \lambda \int_{\R^N}w^{-}|u_n|^p \dx\right) \no \\
    & \geq  \|u\|_{s,p}^p - \lambda \int_{\R^N}w|u|^p \dx  + \mathcal{C}_{\lambda,w}^{G,*}(\R^N) \|\nu\|^{\frac{p}{p_s^{*}}} + \mathcal{C}_{\lambda,w}^{G}(\infty) \nu_\infty^{\frac{p}{p_s^{*}}} \no \\
    & \geq \Bg \|u\|_{p_s^{*}}^p + \mathcal{C}_{\lambda,w}^{G,*}(\R^N) \|\nu\|^{\frac{p}{p_s^{*}}} + \mathcal{C}_{\lambda,w}^{G}(\infty) \nu_\infty^{\frac{p}{p_s^{*}}} \no \\
    & \geq \Bg \bigg[ \|u\|_{p_s^{*}}^{p_s^{*}} + \|\nu\| + \nu_\infty \bigg]^{\frac{p}{p_s^{*}}} = \Bg
\end{align}
Thus, the equality must hold at each step. Clearly, $\nu_\infty=0$, as $w$ is $G$-subcritical at $\infty$, otherwise the equality will not hold in \eqref{e5}. Further we prove that $\|\nu\|=0$ and $\|u\|_{p_s^{*}}=1$. Now from the equality
\begin{align} \label{e6}
    \big[ \|u\|_{p_s^{*}}^{p_s^{*}} + \|\nu\| + \nu_\infty \big]^{\frac{p}{p_s^{*}}} = \|u\|_{p_s^{*}}^p + \|\nu\|^{\frac{p}{p_s^{*}}} +  \nu_\infty^{\frac{p}{p_s^{*}}}=1,
\end{align}
we infer that either $\|u\|_{p_s^{*}}=0$ or $\|\nu\|=0$. If we suppose $\|u\|_{p_s^{*}}=0$, then $u=0$ a.e. in $\R^N$. Now from Lemma \ref{l1} part $(i)$, it follows that $\mathcal{B}_{w,\lambda}^{G}(\R^N) \|\nu\|^{\frac{p}{p_s^{*}}} + \lambda \|\gamma\| \leq \|\Gamma_{\overline{\sum_w} \cup A_{\mathbb{K}}}\|$. Next, if $\mathcal{B}_{w,\lambda}^{G}(\R^N) \|\nu\|^{\frac{p}{p_s^{*}}} < \|\zeta_\lambda\|$, where $\zeta_\lambda$ is as given in Proposition \ref{p1 with u zero}, then the strict inequality $\mathcal{B}_{w,\lambda}^{G}(\R^N) \|\nu\|^{\frac{p}{p_s^{*}}} + \lambda \|\gamma\| < \|\Gamma_{\overline{\sum_w} \cup A_{\mathbb{K}}}\|$ holds. From \eqref{e1}, it follows that $\mathcal{B}_{w,\lambda}^{G}(\R^N) \|\nu\|^{\frac{p}{p_s^{*}}} + \lambda \|\gamma\| = \|\Gamma_{\overline{\sum_w} \cup A_{\mathbb{K}}}\|$, which further implies that $\mathcal{B}_{w,\lambda}^{G}(\R^N) \|\nu\|^{\frac{p}{p_s^{*}}} = \|\zeta_\lambda\|$. By Proposition \ref{p1 with u zero}, we have $\nu$ is either zero or concentrates on a finite $G$-orbit. But we have assumed that the orbits $Gx$ are infinite for all $x \in \R^N$. Therefore, $\nu =0$, and using \eqref{e6} one gets $\|u\|_{p_s^{*}}=1$. Clearly, the constant $\Bg$ is attained by $u$ in $\Dgsp$. Now proceeding with the same set of arguments as in the previous theorem, and using the symmetric criticality theorem and the strong maximum principle \cite{Quass2017}, we obtain a positive solution to problem \eqref{Main problem}.
\end{proof}
\begin{proof} [Proof of Theorem \ref{critical case infinity}]
 ${\rm(a)}$ Let $\{u_n\}$ be a minimizing sequence of the functional $\mathcal{I}_{\lambda,w}$ on $\Sgn$ i.e., $\|u_n\|_{p_{s}^{*}} =1$ and $\mathcal{I}_{\lambda,w}(u_n) \rightarrow \Bg$ as $n \rightarrow \infty$. Thus for each $u_n$, there exists $R_n>0$ such that
 \begin{align*}
     \int_{B_{R_n}} |u_n|^{p_s^{*}} \ \dx \geq \frac{1}{2}.
 \end{align*}
Now we define a function $v_n(z) = R_n^{\frac{N-sp}{p}}u_n(R_n z)$ on $\R^N$. Then
\begin{align*}
    \iint_{\R^N \times \R^N} \frac{|v_n(x)-v_n(y)|^p}{|x-y|^{N+sp}} \ \dxy &= R_{n}^{N-sp} \cdot R_n^{N+sp} \iint_{\R^N \times \R^N} \frac{|u_n(R_n x)-u_n(R_n y)|^p}{|R_n x-R_n y|^{N+sp}} \ \dxy  \\
    & = \iint_{\R^N \times \R^N} \frac{|u_n(x)-u_n(y)|^p}{|x-y|^{N+sp}} \ \dxy
\end{align*}
From above, it is clear that $v_n \in \Dgsp$. Further, we have $\|v_n\|_{p_s^{*}} =1$, i.e.
\begin{align*}
    \int_{\R^N} |v_n|^{p_s^{*}} \ \dx = R_{n}^{\frac{N-sp}{p} \cdot \frac{Np}{N-sp}} \int_{\R^N} |u_n(R_n x)|^{p_s^{*}} \ \dx = \int_{\R^N} |u_n(x)|^{p_s^{*}} \ \dx =1.
\end{align*} 
Thus by the definition of $\Bg$ and since $w$ satisfies \eqref{condition on w critical case} for small $r>0$, we have
\begin{align*}
    \Bg & \leq \|v_n\|_{s,p}^p  - \lambda \int_{\R^N}w(x)|v_n(x)|^p\ \dx \\
    & = \|u_n\|_{s,p}^p - \lambda R_{n}^{N-sp} \int_{\R^N}w(x) |u_n(R_n x)|^p\ \dx\\
    &= \|u_n\|_{s,p}^p  - \lambda R_{n}^{-sp} \int_{\R^N}w\left(\frac{x}{R_n}\right) |u_n(x)|^p\ \dx \\
    &\leq \|u_n\|_{s,p}^p - \lambda \int_{\R^N}w(x) |u_n(x)|^p\ \dx \ (\mbox{by } \eqref{condition on w critical case})\\
    & \rightarrow \Bg \ \text{as} \ n \rightarrow \infty. 
\end{align*}
Since $\lambda \in (0, \lambda_1(w))$, the sequence $\{v_n\}$ is bounded in $\Dgsp$. Hence, $v_n \rightharpoonup v$ in $\Dgsp$. Following the arguments as in Theorem \ref{T3} and using the assumption that $w$ is $G$-subcritical in $\R^N$, we deduce that either $\nu_\infty=0$ or $\|v\|_{p_s^{*}}=0$ (note that $\|\nu\|$ is already $0$). Now
\begin{align*}
    \int_{B_{1}(0)} |v_n|^{p_s^{*}} \dx = R_{n}^{N} \int_{B_{1}(0)} |u_n(R_n x)|^{p_s^{*}} \dx = \int_{B_{R_n}(0)} |u_n(x)|^{p_s^{*}} \dx  \geq \frac{1}{2} .
\end{align*}
It is clear from the above inequality that $\|v\|_{p_s^{*}} \neq 0$. Therefore, $\nu_\infty=0$ and $\|v\|_{p_s^{*}}=1$, and the constant $\Bg$ is attained by $v$ in $\Dgsp$. Now proceeding with the same set of arguments as in the Theorem \ref{T2}, and using the symmetric criticality theorem and the strong maximum principle \cite{Quass2017}, we obtain a positive solution to problem \eqref{Main problem}.

\noindent ${\rm(b)}$ {  Let $\{u_n\}$ and $\{v_n\}$ be the sequences as given in part-$(a)$. Thus for each $u_n$, there exists $R_n>0$ such that
\begin{align*}
    \int_{B_{R_n}} |u_n|^{p_s^{*}} \ \dx = \frac{1}{2} .
\end{align*}
Since $\{v_n\}$ is bounded in $\Dgsp$,  $v_n \rightharpoonup v$ in $\Dgsp$. Now following as Theorem \ref{T3}, we obtain that the following equality must hold.
\begin{align*}
    \left( \|v\|_{p_s^{*}}^{p_s^{*}} + \|\nu\| + \nu_\infty \right)^{\frac{p}{p_s^{*}}} =\|v\|_{p_s^{*}}^p +  \|\nu\|^{\frac{p}{p_s^{*}}} + \nu_\infty^{\frac{p}{p_s^{*}}}=1
\end{align*}
Note that the above equality holds only if exactly one of $\nu_\infty, \|v\|_{p_s^{*}}\ \text{or}\  \|\nu\| $ is $1$ and others are $0$.} Now
\begin{align} \label{e8}
    \int_{B_{1}(0)} |v_n|^{p_s^{*}}\dx = R_{n}^{N} \int_{B_{1}(0)} |u_n(R_n x)|^{p_s^{*}}\dx = \int_{B_{R_n}(0)} |u_n(x)|^{p_s^{*}}\dx  = \frac{1}{2} .
\end{align}
 Using \eqref{e8}, we do the following calculations:
\begin{align*}
   1=\|v_n\|_{\ps}^{\ps} = \int_{B_{1}(0)} |v_n|^{p_s^{*}}\dx + \int_{B_{1}(0)^c} |v_n|^{p_s^{*}}\dx = \frac{1}{2} + \int_{B_{1}(0)^c} |v_n|^{p_s^{*}}\dx, 
\end{align*} which implies
\begin{align*}
   \frac{1}{2}&= \int_{B_{1}(0)^c} |v_n|^{p_s^{*}} \ \dx = \int_{B_{R_n}(0)^c} |u_n|^{p_s^{*}} \ \dx\\
   &\geq \int_{B_{R_n}(0)^c} |u_n|^{p_s^{*}} \Phi_{R_n} \ \dx =\int_{\R^N} |u_n|^{p_s^{*}} \Phi_{R_n} \ \dx,  
\end{align*}
where for each $n \in \mathbb{N}$, $\Phi_{R_n} \in C_c^{\infty}(\R^N)$ with $0 \leq \Phi_{R_n} \leq 1$ and $\Phi_{R_n}=0$ on $B_{R_n}(0)$ and $\Phi_{R_n}=1$ on $B_{2R_n}(0)^c$. Taking the limit $n \rightarrow \infty$ and assuming that $R_n \rightarrow \infty$ as $n \rightarrow \infty$, we obtain $\nu_\infty \leq \frac{1}{2}$. Hence, we get $\nu_\infty =0$.
Now since $\nu_\infty=0$, either $\|\nu\|=0$ and $\|v\|_{p_s^{*}}=1$ or $\|\nu\|=1$ and $\|v\|_{p_s^{*}}=0$. Suppose $\|\nu\|=1$ and $\|v\|_{p_s^{*}}=0$ i.e., $v=0$ a.e., then by proposition \ref{p1 with u zero} we obtain that $\nu$ is concentrated on a finite $G$ orbit. Since the orbits $Gx$ are infinite for $x \neq 0$; therefore the only possibility is that the measure $\nu$ is concentrated at the origin, i.e. $\nu = {\delta_0}$. We choose $\phi \in C_{c}^{\infty}(\R^N)$ such that $ 0 \leq \phi \leq 1$, $\phi =1$ on $B_{1/2}(0)$ and $\phi=0$ on $B_{1}(0)^c$. Now from \eqref{e8} we have,
\begin{align*}
    \frac{1}{2}& = \lim_{n \rightarrow \infty} \int_{B_{1}(0)} |v_n|^{p_s^{*}} \ \dx \\
    &\geq \lim_{n \rightarrow \infty} \int_{B_{1}(0)} \phi |v_n|^{p_s^{*}} \ \dx =\lim_{n \rightarrow \infty} \int_{\R^N} \phi |v_n|^{p_s^{*}} \ \dx = \lim_{n \rightarrow \infty} \int_{\R^N} \phi \ \mathrm{d} \nu_n\\
    &= \int_{\R^N} \phi \ \mathrm{d} \nu = \nu (\phi) = \delta_{0}(\phi) = \phi(0) = 1.
\end{align*}
Thus, we arrive at a contradiction. Therefore, $\|v\|_{p_s^{*}}=1$ and $\|\nu\|=0$. Moreover, the constant $\Bg$ is attained by $v$ in $\Dgsp$. Proceeding with the same set of arguments as in the Theorem \ref{T2}, and using the symmetric criticality theorem and the strong maximum principle \cite{Quass2017}, we obtain a positive solution to problem \eqref{Main problem}.
\end{proof}

\section*{Acknowledgements}  RK acknowledges the support of the CSIR fellowship under file no: 09/1125(0016)/2020--EMR--I. AS was supported by the DST-INSPIRE Grant DST/INSPIRE/04/2018/002208.

\Addresses	 
		
\end{document}